\crefname{assumption}{Assumption}{Assumptions}
\newcommand{\R}{\mathbb{R}}
\newcommand{\N}{\mathbb{N}}
\newcommand{\eps}{\varepsilon}
\DeclareMathOperator*{\argmin}{arg\,min}
\DeclareMathOperator{\dist}{dist}
\newcommand{\norm}[1]{\left\|#1\right\|}
\newcommand{\abs}[1]{\left|#1\right|}
\newcommand{\st}{\,:\,}
\newcommand{\de}{\,\mathrm{d}}
\newcommand{\one}{\mathds{1}}
\DeclareMathOperator{\supp}{supp}
\newtheorem{theorem}{Theorem}[section]
\newtheorem*{theorem*}{Theorem}
\newtheorem{proposition}[theorem]{Proposition}
\newtheorem{lemma}[theorem]{Lemma}
\newtheorem{corollary}[theorem]{Corollary}
\newtheorem*{corollary*}{Corollary}
\theoremstyle{definition}
\newtheorem{definition}[theorem]{Definition}
\newtheorem{assumption}{Assumption}
\newtheorem{example}{Example}
\theoremstyle{remark}
\newtheorem{remark}[theorem]{Remark}
\definecolor{pistachio}{rgb}{0.58, 0.77, 0.45}
\newcommand{\comment}[1]{}
\numberwithin{equation}{section}
\renewcommand{\vec}[1]{\mathbf{#1}}
\newcommand{\closure}[1]{\overline{#1}}
\newcommand{\relclosure}[1]{\overline{#1}^\mathrm{rel}}
\newcommand{\relpartial}{\partial^\mathrm{rel}}
\newcommand{\domain}{\Omega}
\newcommand{\constr}{\mathcal{O}}
\newcommand{\gscale}{h}
\newcommand{\nlscale}{\eps}
\newcommand{\res}{\delta}
\newcommand{\grad}{\nabla}
\renewcommand{\L}{\mathcal{L}}
\DeclareMathOperator{\Lip}{Lip}
\newcommand{\revision}[1]{{\color{magenta}#1}}
\renewcommand{\revision}[1]{{#1}}
\title{Uniform Convergence Rates for Lipschitz Learning on Graphs}
\author{
Leon Bungert
\thanks{Hausdorff Center for Mathematics, University of Bonn, Endenicher Allee 62, Villa Maria, 53115 Bonn, Germany. \href{mailto:leon.bungert@hcm.uni-bonn.de}{leon.bungert@hcm.uni-bonn.de}} 
\and 
Jeff Calder 
\thanks{School of Mathematics, University of Minnesota, 127 Vincent Hall, 206 Church St. S.E., Minneapolis, MN 55455, USA.
\href{mailto:jwcalder@umn.edu}{jwcalder@umn.edu}
}
\and 
Tim Roith
\thanks{Department of Mathematics, University of Erlangen–Nürnberg,
Cauerstraße 11, 91058 Erlangen, Germany. \href{mailto:tim.roith@fau.de}{tim.roith@fau.de}}
}
\date{\today}
\let\blx@rerun@biber\relax
\begin{document}

\maketitle

\begin{abstract}
    Lipschitz learning is a graph-based semi-supervised learning method where one extends labels from a labeled to an unlabeled data set by solving the infinity Laplace equation on a weighted graph.
    In this work we prove uniform convergence rates for solutions of the graph infinity Laplace equation as the number of vertices grows to infinity. 
    Their continuum limits are absolutely minimizing Lipschitz extensions with respect to the geodesic metric of the domain where the graph vertices are sampled from.
    We work under very general assumptions on the graph weights, the set of labeled vertices, and the continuum domain.
    Our main contribution is that we obtain quantitative convergence rates even for very sparsely connected graphs, as they typically appear in applications like semi-supervised learning. In particular, our framework allows for graph bandwidths down to the connectivity radius. 
    For proving this we first show a quantitative convergence statement for graph distance functions to geodesic distance functions in the continuum. 
    Using the ``comparison with distance functions'' principle, we can pass these convergence statements to infinity harmonic functions and absolutely minimizing Lipschitz extensions.
    \\
    \\
    \textbf{Key words:} Lipschitz learning, graph-based semi-supervised learning, continuum limit, ab\-so\-lute\-ly min\-i\-miz\-ing Lipschitz extensions, infinity Laplacian
    \\
    \textbf{AMS subject classifications:} 35J20, 35R02, 65N12, 68T05
\end{abstract}
{
  \hypersetup{linkcolor=black}
  \tableofcontents
}
\section{Introduction}

The last few years have seen a deluge of discrete to continuum convergence results for various problems in graph-based learning. 
This theory makes connections between discrete machine learning and continuum partial differential equations or variational problems, leading to new insights and better algorithms. 
Current works use either a Gamma-convergence framework \cite{roith2021continuum,Slep19,GarcSlep15,van2012gamma}, sometimes leading to convergence rates in energy norms (i.e., $L^2$), or PDE techniques like the maximum principle \cite{calder2020rates,yuan2020continuum,calder2019consistency,calder2018game,garcia2020maximum}, which give uniform convergence rates, albeit with more restrictive conditions on the graph bandwidth that often rule out the sparse graphs used in practice.
The different problems considered can be structured into consistency of spectral clustering---e.g., the convergence of eigenvalues and eigenvectors of graph Laplacian operators~\cite{Lux08,calder2020improved,trillos2020error,calder2020Lip}---and of semi-supervised learning problems, where one is interested in convergence of solutions to boundary value problems of graph Laplacian operators, e.g., \cite{roith2021continuum,Slep19,calder2019consistency,calder2018game,calder2020poisson,calder2020properly}.

In this work we focus on graph-based semi-supervised learning.
Given a large data set $\domain_n$ with $n\in\N$ elements, the general task consists in extending labels $g:\constr_n\to\R$ from a (typically much smaller) set of labeled data points $\constr_n\subset\domain_n$ to the rest of the data set.\footnote{Generally the labels are vector-valued, in $\R^k$ if there are $k$ classes, but this can be reduced to the scalar case with the \emph{one-versus-rest} approach to multi-class classification \cite{murphy2012machine}. } 
To include information about the unlabeled data into the problem, one builds a graph $G_n:=(\domain_n,w_n)$ where $w_n$ denotes a weight function.
The \emph{semi-supervised smoothness assumption} (see, e.g., \cite{van2020survey}) then demands that data points with high similarity, i.e., connected with an edge of large weight, carry similar labels.
To enforce this, the method of Laplacian learning \cite{zhu03} has been developed which requires solving the graph Laplace equation on $G_n$ with prescribed boundary values on $\constr_n$.
Since this method behaves poorly if the label set $\constr_n$ is small \cite{nadler2009semi}, the more general framework of $p$-Laplacian learning \cite{Ala16} was suggested, which mitigates this drawback for sufficiently large values of $p\in(1,\infty)$ \cite{flores2019algorithms}.
It consists in solving the following nonlinear graph $p$-Laplacian equation (see~\cite{elmoataz2015p} for details on the graph $p$-Laplacian)
\begin{align}\label{eq:p-LL}
    (\forall x\in\domain_n\revision{\setminus\constr_n})\;
    \sum_{y\in\domain_n}w_n(x,y)\abs{u(x)-u(y)}^{p-2}(u(x)-u(y)) = 0,\; \text{subject to $u=g$ on $\constr_n$}.
\end{align}
The underlying reason why $p$-Laplacian learning outperforms its special case $p=2$ is the Sobolev embedding $W^{1,p} \hookrightarrow C^{0,1-\frac{d}{p}}$, which implies that only for $p>d$, where $d$ is the ambient space dimension, it is meaningful to prescribe boundary values on the label set $\constr_n$ in the limit $n\to\infty$.
Since the dimension $d$ of the data can be quite large it seems canonical to consider the limit of $p$-Laplacian learning as $p\to\infty$.
The resulting method is called \emph{Lipschitz learning} \cite{kyng2015algorithms} and takes the form
\begin{align}\label{eq:LL}
    (\forall x\in\domain_n\revision{\setminus\constr_n})\;
    \min_{y\in\domain_n}w_n(x,y)(u(y)-u(x)) + 
    \max_{y\in\domain_n}w_n(x,y)(u(y)-u(x))
    = 0,\; \text{subject to $u=g$ on $\constr_n$}.
\end{align}
Note that solutions to this problem are \emph{absolutely minimizing Lipschitz extensions} of $g$ \cite{aronsson2004tour,juutinen2002absolutely}.  
This means that they extend the labels $g$ from $\constr_n$ to $\domain_n$ without increasing the (graph) Lipschitz constant and, in addition, are locally minimal in the sense that their Lipschitz constant cannot be reduced on any subset of $\domain_n$. The operator on the left hand side in \labelcref{eq:LL} is referred to as graph infinity Laplacian since it arises as \revision{the} limit of the graph $p$-Laplacian operators in \labelcref{eq:p-LL}. It has been established experimentally that Lipschitz learning performs very similarly to $p$-Laplacian learning for $d < p < \infty$ \cite{flores2019algorithms}, and so it has the added benefit over $p$-Laplace learning of having one less parameter, $p$, to tune. In addition, while Lipschitz learning \emph{forgets} the distribution of the labeled data \cite{Slep19,Ala16,calder2019consistency}, which is generally undesirable for semi-supervised learning, it is possible to reweight the graph to introduce a strong dependence on the data density \cite{calder2019consistency}.

The importance of Lipschitz-based algorithms in semi-supervised learning has already been observed in \cite{Lux04} and in \cite{kyng2015algorithms} algorithms for solving the Lipschitz learning problem have been proposed (see also \cite{Ober04,flores2019algorithms}).
In~\cite{calder2019consistency} a continuum limit for \labelcref{eq:LL} in the setting of geometric graphs was proved by one author of the present paper.
For $w_n(x,y):=\eta(\abs{x-y}/\gscale_n)$ with $\gscale_n>0$ being a graph length scale and $\eta:[0,\infty)\to[0,\infty)$ a sufficiently well-behaved kernel profile, graph vertices contained in the flat torus, i.e., $\domain_n\subset\mathbb{T}^d$, and a fixed label set $\constr_n=\constr$ it was shown in \cite{calder2019consistency} that as $n\to\infty$ solutions of \labelcref{eq:LL} uniformly converge to the viscosity solution of the infinity Laplacian equation
\begin{align}
    \Delta_\infty u = 0 \text{ on $\mathbb{T}^d\revision{\setminus\constr}$}\quad \text{subject to $u=g$ on $\constr$}.
\end{align}
Here, for a smooth function $u$ the infinity Laplacian is defined as
\begin{align}
    \Delta_\infty u := \langle \nabla u, D^2 u\nabla u\rangle = \sum_{i,j=1}^d \partial_i u\,\partial_j u\, \partial^2_{ij}u.
\end{align}
The main hypothesis for the proof in \cite{calder2019consistency} is that the graph length scale $\gscale_n$ is sufficiently large compared to the resolution $\res_n$ (see \cref{subsec:discr2cont} for the definition) of the graph; in particular, it is assumed that 
\begin{align}\label{eq:jeff_scaling}
    \lim_{n\to\infty} \frac{\res_n^2}{\gscale_n^3} = 0
\end{align}
holds. Furthermore, the viscosity solutions techniques require that the kernel function $\eta$ is sufficiently smooth.
In the related work \cite{roith2021continuum} by the two other authors of the present paper, continuum limits for general Lipschitz extensions on geometric graphs are proved using Gamma-convergence \revision{of the graph Lipschitz constant functional
\(
    u\mapsto
    \max_{x,y\in\domain_n}w_n(x,y)\abs{u(x)-u(y)}
\)
to the $L^\infty$-norm of the gradient
\(
    u\mapsto \norm{\nabla u}_{L^\infty(\domain)}
\)
in a suitable $L^\infty$-type topology.
Note that minimizers of these functionals, satisfying label constraints on $\domain_n$ and $\domain$, respectively, are not unique.
In particular, the Gamma-convergence result does not imply the convergence to \emph{absolute minimizers} of the continuum problem in a straightforward way.
Furthermore, establishing non-asymptotic convergence rates using Gamma-convergence, which by definition is of asymptotic flavor, is a difficult endeavour.}
Still, \revision{the approach in \cite{roith2021continuum}} opened the door to a much more general analysis of the Lipschitz learning problem.
In particular, in \cite{roith2021continuum} the weakest possible scaling assumption
\begin{align}\label{eq:weakest_scaling}
    \lim_{n\to\infty} \frac{\res_n}{\gscale_n} = 0,
\end{align}
which ensures that the graph $G_n$ is connected, was already sufficient to prove Gamma-convergence of the Lipschitz-constant functional.
Furthermore, the theory in \cite{roith2021continuum} applies to more general kernels $\eta$ and a large class of continuum domains.
The paper \cite{roith2021continuum} was also the first to work with general (non-fixed) label sets $\constr_n$ in a Lipschitz learning context (see \cite{calder2020rates} for a corresponding approach for $p$-Laplace learning \labelcref{eq:p-LL} with $p=2$).
Note that the weakest scaling assumption \labelcref{eq:weakest_scaling} has already appeared in \cite{gruyer2007absolutely} where discrete to continuum convergence of absolutely minimizing Lipschitz extensions on unweighted graphs was proved. 

When it comes to convergence rates of solutions to \labelcref{eq:LL} to a suitable continuum problem much less is known. 
To the best of our knowledge the only contribution in this direction is \cite{smart2010infinity}, where convergence rates are proved under very special conditions: the graph is unweighted and assumed to be a regular grid.
Furthermore, the author works with Dirichlet boundary conditions and requires the even stricter scaling assumption
\begin{align}\label{eq:charlie_scaling}
    \lim_{n\to\infty}\frac{\res_n}{\gscale_n^2} = 0.
\end{align}
The main motivational factors for the present paper are therefore the following:
\begin{itemize}
    \item We would like to prove convergence rates for \labelcref{eq:LL} which are valid down to the smallest scaling~\labelcref{eq:weakest_scaling}.
    \item The labeled set $\constr_n$ should be completely free, in particular, it should \emph{not} have to approximate the boundary of $\domain$, which is not realistic for semi-supervised learning.
    \item We would like to work under minimal assumptions on the weight function $\eta$ and the continuum domain $\domain\subset\R^d$ from which the graph vertices in $\domain_n$ are drawn.
\end{itemize}
Let us reiterate that, while the previous scaling conditions \labelcref{eq:jeff_scaling}, \labelcref{eq:weakest_scaling}, and \labelcref{eq:charlie_scaling} might look very similar, they in fact have strong impacts on the numerical complexity of the problem \labelcref{eq:LL}.
To give a concrete example, let $\domain_n$ coincide with a square grid
of the hypercube $[0,1]^d$.
In this case it holds that $\res_n\sim 1/n^{1/d}$ and suitable graph length scales
have the form $\gscale_n=1/n^\alpha$ where $\alpha\in(0,1/d)$ for the weakest scaling
\labelcref{eq:weakest_scaling}, $\alpha\in(0,2/(3d))$ for the scaling 
\labelcref{eq:jeff_scaling}, and $\alpha\in(0,1/(2d))$ for the largest 
scaling \labelcref{eq:charlie_scaling}.
Correspondly, the degree of a graph vertex, i.e., the number of \revision{neighbors}
which have to be considered when solving \labelcref{eq:LL}, scale like
$n\gscale_n^d= n^{1-\alpha d}$.
For $n=10,000$ vertices the size of these computational stencils would scale like 
$5$, $100$, and $500$ for the different scalings. These are dramatic differences in numerical complexity, and the differences grow larger as $n$ increases (e.g., at $n=10^6$ points there is a $1,000$-fold difference in sparsity between the weakest and strongest conditions). Furthermore, in applications of graph-based learning in practice, it is common to use very sparse graphs (typically $k$-nearest neighbor graphs) that operate slightly above the graph connectivity threshold \cite{von2007tutorial}.  
These observations justify the need for analysis and convergence rates that hold in the sparsest connectivity regime.

Let us briefly discuss where the restrictive length scale conditions \labelcref{eq:jeff_scaling} and \labelcref{eq:charlie_scaling} arise in discrete to continuum convergence, and how the techniques used in this paper overcome these issues. Both of these conditions stem from the use of pointwise consistency of the graph infinity Laplacian given in \labelcref{eq:LL} with the continuous infinity Laplacian $\Delta_\infty$, which is one of the main steps required for uniform convergence rates. Pointwise consistency results normally have two steps, one in which we pass to a nonlocal operator by controlling the randomness (or variance) in the graph construction, and a second step that uses Taylor expansion to pass from a nonlocal operator to a local PDE operator. For Lipschitz learning \labelcref{eq:LL} with geometric weights $w_n(x,y) = \eta(|x-y|/\gscale_n)$, the corresponding nonlocal equation (see \cite{chambolle2012holder} for analytical results on such a nonlocal infinity Laplacian equation) is obtained by replacing the discrete set $\Omega_n$ with the continuum domain $\Omega$, yielding
\begin{align}\label{eq:LL_nonlocal}
    \min_{y \in \domain} \eta\left( \frac{|x-y|}{\gscale_n}\right)(u(y)-u(x)) + 
    \max_{y\in \domain}\eta\left( \frac{|x-y|}{\gscale_n}\right)(u(y)-u(x)) = 0.
\end{align}
To control the difference between the discrete and nonlocal operators, we need to prove estimates of the form
\begin{equation}\label{eq:discretetononlocal}
\max_{y\in\domain_n}w_n(x,y)(u(y)-u(x)) = \max_{y\in \domain}\eta\left( \frac{|x-y|}{\gscale_n}\right)(u(y)-u(x)) + O(r_n),
\end{equation}
where $r_n$ depends on the regularity of $\eta$ and $u$. If $\eta$ and $u$ are Lipschitz continuous, then the function
\begin{equation}\label{eq:term}
y \mapsto \eta\left( \frac{|x-y|}{\gscale_n}\right)(u(y)-u(x))
\end{equation}
is Lipschitz continuous with Lipschitz constant independent of $\gscale_n>0$, provided $\eta$ has compact support so that we can use the bound $|u(y)-u(x)|\leq C\gscale_n$. It then follows that $r_n = \res_n$ is exactly the resolution of the point cloud $\domain_n$ (see \cref{subsec:discr2cont} for the definition). We can of course repeat the same argument for the minimum term in \labelcref{eq:LL_nonlocal}. When Taylor expanding $u(y)-u(x)$ to obtain consistency to $\Delta_\infty$, the second order terms are of size $O(\gscale_n^2)$, so we have to divide both side of the nonlocal equation \labelcref{eq:LL_nonlocal} by $\gscale_n^2$ in order to obtain consistency with the infinity Laplace equation $\Delta_\infty u= 0$, leading to pointwise consistency truncation errors of the form $O\left( \delta_n\gscale_n^{-2}\right)$. 
The condition that this term converges to zero is exactly the strongest length scale condition \labelcref{eq:charlie_scaling}.  When $\eta$ and $u$ are $C^2$, we can improve the error in \labelcref{eq:discretetononlocal} to be $r_n = \res_n^2\gscale_n^{-1}$, which follows from examining the Hessian of the mapping in \labelcref{eq:term} (see \cite[Lemma 4.1]{calder2019consistency}). This leads to the pointwise consistency error term $O(\res_n^2\gscale_n^{-3})$, which corresponds to the intermediate condition \labelcref{eq:jeff_scaling}. This condition was suitable in \cite{calder2019consistency} since the viscosity solution framework only requires pointwise consistency for smooth functions $u$, though no convergence rate can be established in this way. In addition to the restrictive length scale conditions on $h_n$ required for pointwise consistency, the kernel $\eta$ is also required to be Lipschitz for \labelcref{eq:charlie_scaling} and $C^2$ for \labelcref{eq:jeff_scaling}, and in either case, the restrictive condition that $t \mapsto t\eta(t)$ has a unique maximum near which it must be strongly concave, is needed to pass from nonlocal to local (see \cite[Section 2.1]{flores2019algorithms}). 

In the present approach we overcome these limitations by introducing a nonlocal infinity Laplacian on a homogenized length scale $\nlscale>\gscale_n$.
The homogenized operator has the form
\begin{align}
    \Delta_\infty^\nlscale u(x) = \frac{1}{\nlscale^2}\left(\inf_{y\in B_\nlscale(x)}(u(y)-u(x))
    +
    \sup_{y\in B_\nlscale(x)}(u(y)-u(x))\right)
\end{align}
and notably the effect of the kernel function $\eta$ has been ``homogenized out''.
Although sometimes hidden, this operator frequently appears in the analysis of equations involving the infinity Laplacian (cf.~\cite{armstrong2010easy,aronsson2004tour,manfredi2012definition,peres2009tug,peres2008tug,lewicka2017obstacle,mazon2012best}).
Most prominently, in \cite{armstrong2010easy} it was used as intermediate step in a very simple and elementary proof of the maximum principle for the infinity Laplace equation.
There, the key ingredient is the astonishing property that whenever $-\Delta_\infty u\leq 0$, the function $u^\nlscale(x):=\sup_{B_\eps(x)}u$ satisfies $-\Delta_\infty^\nlscale u^\nlscale \leq 0$ for any $\nlscale>0$.
For proving this one only utilizes that $u$ satisfies the celebrated ``comparison with cones'' property \cite{aronsson2004tour} which in our more general setting is rather a comparison with distance functions \cite{champion2007principles}.

A key step in our approach is an approximate version of this statement. We show that a graph infinity harmonic function can be extended to a continuum function $u_n^\nlscale$ that satisfies
\begin{align*}
    -\Delta_\infty^\nlscale u_n^\nlscale(x) \leq O\left(\frac{\res_n}{\gscale_n\nlscale} + \frac{\gscale_n}{\nlscale^2}\right).
\end{align*}
Compared to the truncation error of $O(\res_n\gscale_n^{-2})$ from \cite{smart2010infinity} we have improved by replacing one $\gscale_n$ by the (asymptotically larger) nonlocal length scale $\nlscale$.
For this we pay the price of having an additional $\gscale_n\nlscale^{-2}$ term which, however, does not contribute to the error for the small length scales $\gscale_n$ that we are interested in.
The convergence rates then follow by an application of the maximum principle for the operator $\Delta_\infty^\nlscale$ in combination with some (approximate) Lipschitz estimates to go back from $u_n^\nlscale$ to the graph solution.

The main caveat in proving our results is that we are dealing with three different metrics that have to be put into relation with each other.
First, there is the Euclidean metric of the ambient space which enters through the graph weights and quantifies the approximation of the continuum domain by the graph.
Second, there is the geodesic metric which is inherited from the Euclidean one and constitutes the natural metric in the (possibly non-convex) domain $\domain$.
Finally, there is also the graph metric, which describes shortest paths in the graph and is inherently connected to the Lipschitz learning problem \labelcref{eq:LL}.
For proving our continuum limit we need that all three metrics behave similarly on locally scales.
While it is straightforward to show that the graph metric approximates the geodesic metric, we have to assume that the latter also approximates the Euclidean metric locally. 
This we ensure by posing a very mild regularity condition on the continuum domain $\domain$ which prevents internal corners, where the geodesic metric has a constant discrepancy to the Euclidean one.
In particular, this allows us to work with arbitrary (non-smooth) convex domains and also with non-convex domains whose boundaries are smooth in non-convex regions.

The rest of this paper is organized as follows:
\cref{sec:main_results} summarizes the paper by stating all assumptions, the precise settings for the discrete and continuum equations and the passage between these two worlds, and our main results.
Subsequently, we investigate both the discrete and continuum problems in much more detail.
\cref{sec:discrete_problem} introduces weighted graphs, proves that infinity harmonic functions on a graph satisfy comparison with graph distance functions, and are absolutely minimizing Lipschitz extensions.
In \cref{sec:continuum_problem} we study the continuum limit, namely absolutely minimizing Lipschitz extensions (AMLEs).
Most importantly, in \cref{sec:max_ball_perturb} we show that AMLEs give rise to sub- and supersolutions of a nonlocal infinity Laplace equation and collect several perturbation results for this equation which were proved in \cite{smart2010infinity}.
The main part of the paper is \cref{sec:convergence} where we first prove convergence rates of graph distance function to geodesic ones (\cref{sec:cones}) and then use these results to establish convergence rates for graph infinity harmonic functions (\cref{sec:cvgc_inf_harm}).
Finally, in \cref{sec:numerics} we perform some numerical experiments where we compute empirical rates of convergence of AMLEs on a non-convex domain.

\section{Setting, Assumptions, and Main Results}
\label{sec:main_results}

To simplify the distinction between discrete and continuum, we will from now on denote general points in the continuum domain $\closure\domain$ in regular font $x,y,z$, etc., while we will denote points specifically belonging to the point cloud $\domain_n$ with bold font $\vec x,\vec y,\vec z$, etc. 
We will use the same notation for functions on $\closure\domain$, denoted, for example, as $u,v,w:\closure\domain \to \R$, while we will again use bold notation and an index $n$ for functions defined solely on the point cloud $\domain_n$, i.e., $\vec u_n,\vec v_n,\vec w_n:\domain_n\to \R$.

\subsection{The Discrete Problem}

We let $\domain_n$ be a point cloud and $\constr_n\subset\domain_n$ be a set of labelled vertices.
We construct a graph with vertices $\domain_n$ in the following way:
Let $\gscale>0$ be a graph length scale, let  $\eta:[0,\infty)\to [0,\infty)$ be a function satisfying \cref{ass:kernel} below, and define the rescaled kernel as $\eta_\gscale(t) = \frac{1}{\sigma_\eta\gscale}\eta\left( \frac{t}{\gscale}\right)$, where $\sigma_\eta=\sup_{t\geq 0}t\eta(t)$.
\begin{assumption}\label{ass:kernel}
The function \revision{$\eta:(0,\infty)\to [0,\infty)$}
is nonincreasing with $\supp\eta\subset[0,1]$.
Furthermore, $\sigma_\eta:=\revision{\sup_{t> 0}}t\eta(t)\in(0,\infty)$ and we assume that there exists $t_0\in(0,1]$ \revision{(chosen as the largest number with this property)} with $\sigma_\eta=t_0\eta(t_0)$.
\end{assumption}
\begin{example}
A couple of popular kernel functions $\eta$ which satisfy \cref{ass:kernel} are given below:
\begin{itemize}
    \item (constant weights) $\eta(t)=1_{[0,1]}(t)$,
    \item (exponential weights) $\eta(t)=\exp(-t^2/(2\sigma^2))1_{[0,1]}(t)$,
    \item (non-integrable weights) $\eta(t)=\frac{1}{t^p}1_{[0,1]}(t)$ with $p\in(0,1]$.
\end{itemize}
Note that the assumption that $t_0>0$ achieves the \revision{supremum} is not redundant.
Indeed, for the kernel
\begin{align*}
    \eta(t) = \frac{1}{t^{\frac{1}{t-1}+2}}1_{[0,1]}(t),\quad\revision{t>0,}
\end{align*}
the \revision{supremum} of $t\mapsto t\eta(t)$ is achieved at $t_0=0$ although $\sigma_\eta=1$.
\end{example}
Using the kernel function, the edge connecting two vertices $\vec x,\vec y\in\domain_n$ carries the weight $\eta_\gscale(\abs{\vec x-\vec y})$ \revision{whenever $\vec x\neq\vec y$ and zero otherwise}.
\revision{Consequently, in the whole paper we use the convention that $\eta(0)\cdot 0 := 0$ even though $\eta$ is not defined in $0$.
}
\revision{Of course, the weight between two vertices $\vec x,\vec y\in\domain_n$} is \revision{also} zero whenever $\abs{\vec x-\vec y}>\gscale$, in which case we regard the vertices as not connected.
The number $\gscale>0$ can be interpreted as characteristic connectivity length scale of the graph.

The main objective of this paper is to prove convergence rates for solutions to the graph infinity Laplacian equation to solutions of the corresponding continuum problem.
To state the discrete problem we let $\vec g_n:\constr_n\to\R$ be a labelling function and $\vec u_n:\domain_n\to\R$ a solution of the graph infinity Laplacian equation
\begin{align}\label{eq:discrete_problem}
\begin{cases}
\min_{\vec y\in\domain_n} \eta_h(\abs{\vec x-\vec y})\left(\vec u_n(\vec y) - \vec u_n(\vec x)\right) +  \max_{\vec y\in\domain_n} \eta_h(\abs{\vec x-\vec y})\left(\vec u_n(\vec y) - \vec u_n(\vec x)\right) = 0,\; &\vec x\in \domain_n\setminus\constr_n,\\
\vec u_n(\vec x) = \vec g_n(\vec x), \; &\vec x\in \constr_n.
\end{cases}
\end{align}
The solution of this problem (see \cite{calder2019consistency} for existence and uniqueness) is characterized by the fact that it is an absolutely minimizing Lipschitz extension of the labelling function $\vec g_n$ in the sense that
\begin{align}
    \Lip_n(\vec u_n;A) = \Lip_n(\vec u_n;\partial A),\quad \forall A\subset\domain_n.
\end{align}
Here $\Lip_n(\vec u_n;A)$ is a discrete Lipschitz constant of the graph function $\vec u_n$ on a subset of vertices $A\subset\domain_n$ with graph boundary $\partial A$.
We refer to \cref{sec:discrete_problem} for precise definitions and a proof of this statement in an abstract graph setting.
In particular, not only do we have $\Lip_n(\vec u_n;\domain_n)=\Lip_n(\vec g_n;\constr_n)$, which would be satisfied by any Lipschitz extension, but the Lipschitz constant of $\vec u_n$ is also locally minimal.

\subsection{Discrete to Continuum}
\label{subsec:discr2cont}

To set the scene for a discrete to continuum study, we \revision{let $\domain\subset\R^d$ be an open and bounded domain and} assume that the point cloud $\domain_n$ is sampled from $\closure{\domain}$ and that the constraint set $\constr_n$ approximates a closed set $\constr\subset\closure{\domain}$.
Of course, we need to quantify how well $\domain_n$ and $\constr_n$ approximate their continuum counterparts $\domain$ and $\constr$. 
For this we introduce the Hausdorff distance of two sets $A,B\subset\R^d$:
\begin{align}
    d_H(A,B):=\sup_{x\in A}\inf_{y\in B}\abs{x-y}\vee \sup_{x\in B}\inf_{y\in A}\abs{x-y}.
\end{align}
Note that since $\domain_n\subset\closure\domain$, its Hausdorff distance simplifies to
\begin{align}
    d_H(\domain_n,\closure\domain) = \sup_{x\in\closure\domain}\inf_{\vec y\in\domain_n}\abs{x-\vec y}.
\end{align}
To control both approximations at the same time we define the resolution of the graph as
\begin{equation}\label{eq:graph_res}
    \res_n := d_H(\domain_n,\closure\domain) \vee d_H(\constr_n,\constr),
\end{equation}
\revision{where} we recall that $a\vee b = \max\{a,b\}$ and $a\wedge b = \min\{a,b\}$. For convenience we introduce a closest point projection $\pi_n:\closure\domain\to\domain_n$, meaning that
\begin{align}\label{eq:CCP}
    \pi_n(x) \in \argmin_{\vec x \in \domain_n}\abs{x-\vec x},\quad x\in\closure\domain.
\end{align}
Note that the closest point projection has the important property that
\begin{align}
    \abs{\pi_n(x) - x} \leq \res_n,\quad\forall x\in\closure\domain,
\end{align}
where $\res_n$ is the graph resolution \labelcref{eq:graph_res}.
For a function $\vec{u}:\domain_n\rightarrow\R$ we define its piecewise constant extension to $\closure\domain$ as
\begin{align}\label{eq:extension}
    u_n : \closure\domain \to \R,\quad u_n := \vec u \circ \pi_n.
\end{align}
Note that $u_n$ is a simple function, constant on each Voron\"i cell \cite{voronoi1908nouvellesa,voronoi1908nouvellesb} of $\domain_n$.

For treating the labelling sets $\constr_n$ and $\constr$ we use a projection $\pi_{\constr_n}:\constr\to\constr_n$, satisfying
\begin{align}
    \pi_{\constr_n}(z) \in \argmin_{\vec z\in\constr_n}\abs{z-\vec z},\quad z\in\constr.
\end{align}
By the definition of the graph resolution \labelcref{eq:graph_res} it holds
\begin{align}\label{eq:graph_res_closest_point}
    \abs{\pi_{\constr_n}(z) - z} \leq \res_n,\quad\forall z\in\constr.
\end{align}

\subsection{The Continuum Problem}
\label{subsec:continuum_problem}

Before we formulate the continuum problem, we need to discuss some properties of the continuum domain $\closure\domain$ and the label set $\constr$.
Unlike in classical PDE theory we do not prescribe boundary values on the topological boundary $\partial\domain$ but instead on the closed subset $\constr\subset\closure\domain$ which is equipped with a continuous labeling function $g:\constr\to\R$.

Our motivation for this comes from applications like image classification where, e.g., $\domain=(0,1)^d$ could model the space of grayscale images with $d$ pixels and $\constr$ is a subset of labeled images. 
In this case it is not meaningful to assume that $\constr=\partial\domain$ but instead $\constr$ could even be a discrete subset of $\closure\domain$. For notational convenience we will for the rest of the paper use the notation
\begin{align}
    \domain_\constr &:= \closure\domain\setminus\constr.
\end{align}
We define the geodesic distance $d_\domain(x,y)$ by
\begin{align}
    d_\domain(x,y) = \inf\left\{ \int_0^1 |\dot{\xi}(t)|\, dt \, : \, \xi\in C^1([0,1];\closure \domain) \text{ with } \xi(0)=x \text{ and } \xi(1)=y\right\},\quad x,y\in\closure\domain.
\end{align}
\revision{By definition it holds $d_\domain(x,y)\geq\abs{x-y}$ and} if the line segment from $x$ to $y$ is contained in $\closure \domain$, which, in particular, is the case for convex $\closure\domain$, then $d_\domain(x,y)=|x-y|$.
We pose the following assumption on the domain which relates the intrinsic geodesic distance with the extrinsic Euclidean one.
The assumption is satisfied, e.g., for convex sets or sets with mildly smooth boundaries (see \cref{sec:convergence}), \revision{and essentially demands that $\domain$ has no inward-pointing cusps.}
\begin{assumption}\label{ass:geodesic_euclidean_2}
There exists a function $\phi:[0,\infty)\to[0,\infty)$ with $\lim_{h\downarrow 0}\frac{\phi(h)}{h} = 0$ and $r_\domain>0$ such that 
\begin{equation}\label{eq:geodesic_euclidean}
d_\domain(x,y) \leq |x-y| + \phi(\abs{x-y}), \quad \text{for all }x,y\in\domain\st |x-y|\leq r_\domain.
\end{equation}
\revision{Furthermore, for $\gscale>0$ we define $\sigma_\phi(\gscale) = \sup_{0 < s \leq \gscale}\frac{\phi(s)}{s}$.}
\end{assumption}
Using the geodesic distance function we can also define the distance between a point and a set and, more generally, \revision{between} two sets as
\begin{align}
    \dist_\domain(A,B) := \inf_{\substack{x\in A\\y\in B}}d_\domain(x,y)
\end{align}
and we abbreviate $\dist_\domain(x,A):=\dist_\domain(\{x\},A)$ for sets $A,B\subset\closure\domain$.
The corresponding distance functions with respect to the Euclidean metric are denoted by $\dist(\cdot,\cdot)$.

In what follows we forget that $\closure\domain$ is a subset of $\R^d$ and simply regard $(\closure\domain,d_\domain)$ as a metric space, or more specifically, a \emph{length space}.
In particular, all topological notions like interior, closure, boundary, etc., will be understood with respect to the relative (intrinsic) topology of this length space.
For example, $\domain_\constr$ is relatively open, its relative boundary $\relpartial\domain_\constr$ coincides with $\constr$, and its relative closure is $\relclosure\domain_\constr=\closure\domain$.
Note that for a subset $A\subset\closure\domain$ it holds $\relclosure A=\closure A$, which is why we do not distinguish those two different closures anymore.
When it is clear from the context we also omit the word ``relative''.
All closures and \revision{boundaries} with respect to $\domain$ as an open subset of $\R^d$ will be denoted by the usual symbols.

The continuum problem is to find an absolutely minimizing Lipschitz extension (AMLE) of the function $g:\constr\to\R$ to the whole domain $\closure\domain$.
For this, we define the Lipschitz constant of a function $u:\closure\domain\to\R$ on a subset $A\subset\closure\domain$ as
\begin{align}
    \Lip_\domain(u;A) := \sup_{x,y\in A}\frac{\abs{u(x)-u(y)}}{d_\domain(x,y)},
\end{align}
and we abbreviate $\Lip_\domain(u):=\Lip_\domain(u;\closure\domain)$.
A function $u\in C(\closure\domain)$ is an absolutely minimizing Lipschitz extension of $g$ if it holds that
\begin{align}\label{eq:AMLE}
    \begin{cases}
    \Lip_\domain(u;\closure V)
    = \Lip_\domain(u;\relpartial V)\quad&\text{for all relatively open and connected subsets $V\subset\domain_\constr$},\\
    u = g \quad&\text{on }\constr.
    \end{cases}
\end{align}

\subsection{Main Results}

We now state our general uniform convergence result for solutions $\vec u_n:\domain_n\to\R$ of the graph infinity Laplacian equation \labelcref{eq:discrete_problem} to an AMLE $u:\closure\domain\to\R$ satisfying \labelcref{eq:AMLE}.
Obviously, we also have to quantify in which sense the labelling functions $\vec g_n:\constr_n\to\R$ converge to $g:\constr\to\R$. 
In the simplest case $\constr_n=\constr$ and $\vec g_n=g$ for all $n\in\N$.
However, in general $\constr_n\neq\constr$ and we can work under the following much more general assumption.
\begin{assumption}\label{ass:data}
The labelling functions $\vec g_n:\constr_n\to\R$ and $g:\constr\to\R$ satisfy:
\begin{itemize}
    \item $\sup_{n\in\N}\Lip_n(\vec g_n;\constr_n)<\infty$ and $\Lip_\domain(g;\constr)<\infty$.
    \item There exists $C>0$ such that for all $z\in\constr$ it holds that $\abs{\vec g_n(\pi_{\constr_n}(z))-g(z)}\leq C\res_n$.
\end{itemize}
\end{assumption}
\begin{example}
In this example we illustrate some important special cases where this assumption is satisfied.
\begin{itemize}
    \item If $\vec g_n=g$ and $\constr_n=\constr$ for all $n\in\N$ and $g:\constr\to\R$ is Lipschitz with respect to the Euclidean distance, meaning $\Lip(g;\constr):=\sup_{x,y\in\constr}\frac{\abs{g(x)-g(y)}}{\abs{x-y}}<\infty$, then \cref{ass:data} is fulfilled.
    This is the setting of~\cite{calder2019consistency}.
    \item If $\constr_n=\constr$ for all $n\in\N$ then $\pi_{\constr_n}(z)=z$ for all $z\in\N$ and hence \cref{ass:data} reduces to sufficiently fast uniform convergence of $\vec g_n$ to $g$ on $\constr$ if $g:\constr\to\R$ is Lipschitz.
    \item If $\vec g_n=g$ where $g:\closure\domain\to\R$ is defined on the whole of $\closure\domain$ and is Lipschitz, then \cref{ass:data} is satisfied by \revision{properties}
    of the graph resolution $\res_n$\revision{, see \labelcref{eq:graph_res_closest_point}}.
    This is the setting of \cite{roith2021continuum}.
\end{itemize}

\end{example}

Our main result is non-asymptotic and depends on a free parameter $\nlscale>0$.
Since this parameter arises from a nonlocal homogenized problem (see \cref{sec:max_ball_perturb} for details), we refer to it as the nonlocal length scale.
The relative scaling between the graph resolution $\res_n$, the graph length scale $\gscale$, and the nonlocal length scale $\nlscale$ is of utmost importance for our convergence statement.
\revision{Later we shall optimize over $\nlscale$ to obtain convergence rates depending only on $\res_n$ and $\gscale$.}
\begin{assumption}\label{ass:scaling}
The parameters $\res_n>0$, $\gscale>0$, and $\nlscale>0$ satisfy
\begin{subequations}
\begin{align}
    \revision{\gscale} &\leq \revision{r_\domain,} \\
    \frac{\gscale}{\nlscale} &< \frac{1}{2}, \\
    \sigma_\phi(\gscale) + \frac{\res_n}{\gscale} 
    &\leq
    \revision{\frac{t_0}{4+2\sigma_\phi(\gscale)}}\left(1-2\frac{\gscale}{\nlscale}\right).
\end{align}
\end{subequations}
\revision{Here $t_0>0$, $r_\domain>0$, and $\sigma_\phi(\gscale)$ are defined in \cref{ass:kernel,ass:geodesic_euclidean_2}, respectively.}
\end{assumption}
\revision{%
\begin{remark}
By \cref{ass:geodesic_euclidean_2} $\sigma_\phi(\gscale)\to 0$ as $\gscale\to 0$.
Hence, for very small graph bandwidths the scaling assumption on general domains approaches the one on convex domains where $\phi=0$.
\end{remark}}
For a concise presentation we will in the following use the notation $a\lesssim b$ which is very common in the PDE community and means $a\leq C\,b$ for a universal constant $C>0$, possibly depending on the kernel $\eta$ or the label function $g$.
Furthermore, the notation $a_n\ll b_n$ means $a_n/b_n\to 0$ as $n\to\infty$.
\begin{theorem}[General Convergence Result]\label{thm:general_convergence_result}
Let $\nlscale>0$, let \cref{ass:kernel,ass:geodesic_euclidean_2,ass:data,ass:scaling} hold, let $\vec u_n:\domain_n\to\R$ solve \labelcref{eq:discrete_problem}, and $u:\closure\domain\to\R$ solve \labelcref{eq:AMLE}.
\begin{enumerate}
    \item It holds 
    \begin{align*}
        \sup_{\domain_n}\abs{u-\vec u_n} = \sup_{\closure\domain}\abs{u- u_n} \lesssim \nlscale +  \sqrt[3]{\frac{\res_n}{\gscale\nlscale} + \frac{\gscale}{\nlscale^2}+\frac{\phi(\gscale)}{\gscale\nlscale}}.
    \end{align*}
    \item If $\inf_{\closure\domain}\abs{\grad u}>0$, then it even holds 
    \begin{align*}
        \sup_{\domain_n}\abs{u-\vec u_n} = \sup_{\closure\domain}\abs{u- u_n} \lesssim \nlscale +  {\frac{\res_n}{\gscale\nlscale} + \frac{\gscale}{\nlscale^2}+\frac{\phi(\gscale)}{\gscale\nlscale}}.
    \end{align*}
Here, $u_n:\closure\domain\to\R$ denotes the piecewise constant extension of $\vec u_n$, defined in \labelcref{eq:extension}.
\end{enumerate}

\end{theorem}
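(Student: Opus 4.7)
The plan is to follow the homogenization strategy sketched in the introduction, working with the nonlocal operator $\Delta_\infty^\nlscale$ on the intermediate length scale $\nlscale$. Given the solution $\vec u_n$ of \labelcref{eq:discrete_problem} with piecewise constant extension $u_n$, I introduce the sup- and inf-convolutions on the geodesic ball of radius $\nlscale$,
\begin{align*}
    u_n^\nlscale(x) := \sup_{y\in\closure\domain,\, d_\domain(x,y)\leq\nlscale} u_n(y),
    \qquad
    u_{n,\nlscale}(x) := \inf_{y\in\closure\domain,\, d_\domain(x,y)\leq\nlscale} u_n(y).
\end{align*}
These are at uniform distance $\lesssim\nlscale$ from $u_n$, since the graph Lipschitz constant of $\vec u_n$ is controlled uniformly by \cref{ass:data}; this distance precisely accounts for the additive term $\nlscale$ in the final rate.

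The first and technically hardest step is the ``almost subsolution'' estimate
\begin{align*}
    -\Delta_\infty^\nlscale u_n^\nlscale(x) \lesssim \frac{\res_n}{\gscale\nlscale} + \frac{\gscale}{\nlscale^2} + \frac{\phi(\gscale)}{\gscale\nlscale} =: \lambda_n
\end{align*}
valid at every $x\in\closure\domain$ whose geodesic $\nlscale$-ball lies in $\domain_\constr$, together with the symmetric supersolution estimate for $u_{n,\nlscale}$. The starting point is the fact, proved in \cref{sec:discrete_problem}, that graph infinity harmonic functions satisfy comparison with graph distance functions. I would transfer this comparison from the graph to the continuum at scale $\nlscale$ by invoking the quantitative convergence of graph distances to geodesic distances from \cref{sec:cones}. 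This is exactly where the three competing metrics -- graph, geodesic, Euclidean -- must be reconciled: the graph-to-geodesic error contributes $\res_n/(\gscale\nlscale)$, the ``homogenization'' of $\eta$ (where \cref{ass:kernel} supplies the maximizing radius $t_0$, also crucial for \cref{ass:scaling}) contributes $\gscale/\nlscale^2$, and \cref{ass:geodesic_euclidean_2} through $\sigma_\phi(\gscale)$ contributes $\phi(\gscale)/(\gscale\nlscale)$.

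On the continuum side, the AMLE $u$ satisfies comparison with geodesic distance functions and thus, by \cref{sec:max_ball_perturb}, is an exact sub- and supersolution of $-\Delta_\infty^\nlscale u = 0$ on $\domain_\constr$. I would then apply the perturbed maximum principle for $\Delta_\infty^\nlscale$ from \cite{smart2010infinity} (recalled in \cref{sec:max_ball_perturb}) to the pairs $(u_n^\nlscale,u)$ and $(u,u_{n,\nlscale})$. The cube root $\sqrt[3]{\lambda_n}$ appearing in part~(1) is the standard rate at which a nonlocal perturbation of size $\lambda_n$ propagates to a uniform error for infinity-harmonic functions, arising from a quadratic modulus of continuity of viscosity sub/supersolutions. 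The boundary discrepancy between $\vec g_n$ and $g$ that enters through the maximum principle is controlled by $C\res_n$ via \cref{ass:data} and the closest-point projection $\pi_{\constr_n}$, and is absorbed into the main terms. Combining with $\|u_n^\nlscale - u_n\|_\infty\lesssim\nlscale$ gives the statement of part~(1).

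Finally, for part~(2), the hypothesis $\inf_{\closure\domain}|\grad u|>0$ gives a local bi-Lipschitz structure of $u$ near every point, so the quadratic modulus underlying the cube-root loss is replaced by a linear one. This is implemented using the non-degenerate perturbation estimate in \cref{sec:max_ball_perturb}, which yields linear rather than cube-root dependence on $\lambda_n$. The main obstacle in the whole scheme is the almost-subsolution estimate of the second paragraph: one must simultaneously homogenize out the kernel $\eta$ using the single radius $t_0$, approximate the graph distance by the geodesic distance with explicit constants, and correctly account for the local geodesic-vs-Euclidean deviation encoded by \cref{ass:geodesic_euclidean_2}. Once that estimate is in hand, the remainder is a fairly mechanical application of the nonlocal comparison principle of \cite{smart2010infinity}.
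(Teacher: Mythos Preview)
Your overall architecture matches the paper's proof closely, including the discrete almost-subsolution estimate for $u_n^\nlscale$ (this is \cref{thm:discrete_NL}), the perturbation machinery from \cref{sec:max_ball_perturb} that produces the cube root, and the removal of the cube root under $\inf|\nabla u|>0$. One point, however, is not right as stated and would make the comparison step fail if taken literally.

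You write that the AMLE $u$ ``is an exact sub- and supersolution of $-\Delta_\infty^\nlscale u=0$.'' This is not what \cref{lem:nonloc2loc} says, and in general it is false: an infinity harmonic function need not solve the nonlocal equation. The Max-Ball lemma asserts only that $T^\nlscale u$ is a nonlocal subsolution and $T_\nlscale u$ a nonlocal supersolution on $\domain_\constr^{2\nlscale}$. Consequently the nonlocal comparison in \cref{lem:nonlocal_max_princ} must be applied to the pair $(u_n^\nlscale,\,T_\nlscale u)$ (and symmetrically $(T^\nlscale u,\,(u_n)_\nlscale)$), not to $(u_n^\nlscale,u)$. The paper does exactly this in \cref{prop:approx_max_princ}, and then recovers $u$ from $T_\nlscale u$ via \cref{lem:ball_to_func_cont}, which only costs another additive $\nlscale$. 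So the fix is cheap, but without it your maximum-principle step has no supersolution to compare against.

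A smaller remark: the cube root does not come from a ``quadratic modulus of continuity'' but from balancing the two perturbation lemmas. One first forces $S_\nlscale^- v\geq\delta$ at cost $O(\delta)$ (\cref{lem:perturb_pos_grad}), then squares to gain $\delta(S_\nlscale^- v)^2\geq\delta^3$ in the inequality (\cref{lem:perturb_strict}); matching $\delta^3$ to the consistency error $\lambda_n$ gives $\delta=\lambda_n^{1/3}$. When $\inf|\nabla u|>0$ the first step is unnecessary, which is why part~(2) is linear in $\lambda_n$.
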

The main advantage of our results is that they allow one to obtain uniform convergence rates for the weakest possible scaling assumption on the graph length scale $\gscale$ with respect to the graph resolution $\res_n$.
Indeed, $\gscale=\gscale_n$ satisfying $\frac{\res_n}{\gscale_n}\to 0$ and $\gscale_n\to 0$ is the smallest possible graph length scale such that the graph is asymptotically connected.
For such scaling we recover known convergence results from \cite{gruyer2007absolutely,roith2021continuum} and improve the result from \cite{calder2019consistency}.
\begin{corollary}[Convergence under Weakest Scaling Assumption]
For $\gscale=\gscale_n$ and $\res_n\ll\gscale_n\ll\nlscale$ it holds that $u_n\to u$ uniformly on $\closure\domain$ as $n\to\infty$.
\end{corollary}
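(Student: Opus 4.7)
The plan is to derive the corollary directly from part (1) of \cref{thm:general_convergence_result} by choosing the free nonlocal scale $\nlscale=\nlscale_n$ so that every summand in the abstract bound
\[
    \nlscale + \sqrt[3]{\frac{\res_n}{\gscale_n\nlscale} + \frac{\gscale_n}{\nlscale^2} + \frac{\phi(\gscale_n)}{\gscale_n\nlscale}}
\]
tends to zero. The key observation is that, by \cref{ass:geodesic_euclidean_2}, $\sigma_\phi(\gscale_n)=\phi(\gscale_n)/\gscale_n \to 0$, while $\gscale_n\to 0$ and $\res_n/\gscale_n\to 0$ hold by hypothesis. Hence
\[
    a_n := \max\left\{\sqrt{\gscale_n},\; \frac{\res_n}{\gscale_n},\; \sigma_\phi(\gscale_n)\right\}
\]
tends to zero as $n\to\infty$.

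I would then take $\nlscale_n:=\sqrt{a_n}$, so that $\nlscale_n\to 0$ but $\nlscale_n^2=a_n$ still dominates each of the three small quantities above. This immediately yields
\[
    \frac{\gscale_n}{\nlscale_n^2}\leq \sqrt{\gscale_n},\qquad \frac{\res_n}{\gscale_n\nlscale_n}\leq \sqrt{a_n},\qquad \frac{\phi(\gscale_n)}{\gscale_n\nlscale_n}=\frac{\sigma_\phi(\gscale_n)}{\nlscale_n}\leq \sqrt{a_n},
\]
so the argument of the cube root is bounded by $3\sqrt{a_n}\to 0$. Moreover $\nlscale_n\geq\gscale_n^{1/4}$, giving $\gscale_n/\nlscale_n\leq\gscale_n^{3/4}\to 0$, so the implicit requirement $\gscale_n\ll\nlscale_n$ built into the corollary is also satisfied.

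The last step is to verify that \cref{ass:scaling} is valid with $(\gscale,\nlscale)=(\gscale_n,\nlscale_n)$ for $n$ large enough: the conditions $\gscale_n\leq r_\domain$ and $\gscale_n/\nlscale_n<1/2$ are eventual because $\gscale_n\to 0$ and $\gscale_n/\nlscale_n\to 0$, and in the third condition the left-hand side $\sigma_\phi(\gscale_n)+\res_n/\gscale_n$ tends to zero while the right-hand side tends to $t_0/4>0$. Once these are in place, \cref{thm:general_convergence_result}(1) applies and both summands in its bound vanish, yielding the claimed uniform convergence.

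The only non-routine point in the argument is the choice of $\nlscale_n$: it must dominate each of $\sqrt{\gscale_n}$, $\res_n/\gscale_n$, and $\sigma_\phi(\gscale_n)$ yet still tend to zero. Taking the square root of their maximum provides a single explicit sequence that does this under the hypothesis $\res_n\ll\gscale_n$ together with $\gscale_n\to 0$ (which is implicit in the corollary's setup, since otherwise $u_n$ cannot resolve the continuum problem).
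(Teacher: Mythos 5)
Your proof is correct, but it takes a genuinely different route from the paper's. The paper's proof is the softer $\eps$-style argument: keep $\nlscale>0$ \emph{fixed}, apply \cref{thm:general_convergence_result} to conclude $\limsup_{n\to\infty}\sup_{\closure\domain}\abs{u-u_n}\lesssim\nlscale$, and then invoke the arbitrariness of $\nlscale$ to conclude the limit is zero. You instead construct an explicit sequence $\nlscale_n=\sqrt{a_n}\to 0$ calibrated to simultaneously dominate $\sqrt{\gscale_n}$, $\res_n/\gscale_n$, and $\sigma_\phi(\gscale_n)$, so that \emph{both} terms in the theorem's bound vanish along that sequence. The paper's argument is shorter and needs no construction; yours is constructive and, as a by-product, comes with an explicit (if non-optimized) convergence rate $\sup_{\closure\domain}\abs{u-u_n}\lesssim\sqrt{a_n}+\sqrt[3]{3\sqrt{a_n}}\lesssim a_n^{1/6}$, which the paper's diagonal argument does not directly give. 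You also carry out the check that \cref{ass:scaling} is eventually satisfied along your sequence, which the paper's proof leaves implicit; this is careful and correct. One very minor slip: you write $\sigma_\phi(\gscale_n)=\phi(\gscale_n)/\gscale_n$, but by definition $\sigma_\phi(\gscale_n)=\sup_{0<s\leq\gscale_n}\phi(s)/s\geq\phi(\gscale_n)/\gscale_n$; since you only ever use the inequality in the direction that makes your bounds hold, and since $\sigma_\phi(\gscale_n)\to 0$ still follows from \cref{ass:geodesic_euclidean_2}, this does not affect the argument.
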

\begin{proof}
Using the scaling assumption $\res_n\ll\gscale_n\ll\nlscale$ \revision{and keeping $\nlscale>0$ fixed, the entire root in \cref{thm:general_convergence_result} converges to zero and we obtain} that
\begin{align*}
    \lim_{n\to\infty}\sup_{\closure\domain}\abs{u-u_n} \lesssim \nlscale.
\end{align*}
Since $\nlscale>0$ was arbitrary, this shows the assertion.
\end{proof}
For every graph scaling $\gscale_n$ which is asymptotically larger than the weakest one, our general convergence theorem allows us to define a nonlocal scaling $\nlscale=\nlscale_n$ for which a uniform convergence rate holds true.
\revision{By optimizing over $\nlscale_n$ we obtain convergence rates in different regimes, which we discuss in the following.}
\revision{
\begin{corollary}[Small Length Scale Regime]\label{cor:small_scale}
We have the following convergence rates:
\begin{enumerate}
    \item For \revision{$\res_n \lesssim \gscale \lesssim \res_n^{\frac{5}{9}}$} it holds
    \begin{align*}
        \sup_{\domain_n}\abs{u-\vec u_n}
        =
        \sup_{\closure\domain}\abs{u-u_n} 
        \lesssim\left(\frac{\res_n + \phi(\gscale)}{\gscale}\right)^\frac{1}{4}.
    \end{align*}
    \item If $\inf_{\closure\domain}\abs{\grad u}>0$, then for \revision{$\res_n \lesssim \gscale \lesssim \res_n^{\frac{3}{5}}$} it holds
    \begin{align*}
        \sup_{\domain_n}\abs{u-\vec u_n} 
        =
        \sup_{\closure\domain}\abs{u-u_n} 
        \lesssim \left(\frac{\res_n + \phi(\gscale)}{\gscale}\right)^\frac{1}{2}.
    \end{align*}
\end{enumerate}
\end{corollary}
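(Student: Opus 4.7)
The plan is to apply \cref{thm:general_convergence_result} with a carefully chosen nonlocal scale $\nlscale = \nlscale(\res_n,\gscale)$ that balances the terms on the right-hand side, and then to verify that this choice satisfies the scaling hypothesis \cref{ass:scaling}. For brevity set $\Psi := \frac{\res_n + \phi(\gscale)}{\gscale}$, so that the two error bounds of \cref{thm:general_convergence_result} read
\begin{align*}
    \text{(i)}\quad \sup_{\closure\domain}|u-u_n| \lesssim \nlscale + \sqrt[3]{\tfrac{\Psi}{\nlscale} + \tfrac{\gscale}{\nlscale^2}}, \qquad
    \text{(ii)}\quad \sup_{\closure\domain}|u-u_n| \lesssim \nlscale + \tfrac{\Psi}{\nlscale} + \tfrac{\gscale}{\nlscale^2}.
\end{align*}

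\textbf{Case (i).} I would equate the linear term $\nlscale$ with the cube root of the dominant nonlocal term $\tfrac{\Psi}{\nlscale}$. Setting $\nlscale^{3} \asymp \tfrac{\Psi}{\nlscale}$ yields the choice $\nlscale \asymp \Psi^{1/4}$, which makes the first two terms both of order $\Psi^{1/4}$. It then remains to show that the third term $\tfrac{\gscale}{\nlscale^2}$ inside the root is dominated by $\tfrac{\Psi}{\nlscale}$, i.e.\ $\nlscale \gtrsim \gscale^2/\Psi$. Substituting $\nlscale\asymp\Psi^{1/4}$, this reduces to $\Psi^{5/4} \gtrsim \gscale^{9/4}$, equivalently $\res_n + \phi(\gscale) \gtrsim \gscale^{9/5}$. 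Since $\phi(\gscale)\geq 0$, this is implied by $\gscale \lesssim \res_n^{5/9}$, which is exactly the hypothesis. Plugging this $\nlscale$ back into (i) then gives $\sup|u-u_n| \lesssim \Psi^{1/4}$, which is the asserted bound.

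\textbf{Case (ii).} Now the cube root is absent, so I would instead balance $\nlscale$ against $\tfrac{\Psi}{\nlscale}$ directly, leading to $\nlscale \asymp \Psi^{1/2}$. The bound then becomes $\nlscale + \tfrac{\Psi}{\nlscale} \asymp \Psi^{1/2}$ provided the remaining term $\tfrac{\gscale}{\nlscale^2}$ is also controlled by $\Psi^{1/2}$. This reduces to $\nlscale^3 \gtrsim \gscale$, and with $\nlscale \asymp \Psi^{1/2}$ this becomes $\Psi^{3/2}\gtrsim\gscale$, i.e.\ $\res_n + \phi(\gscale) \gtrsim \gscale^{5/3}$, which is implied by $\gscale \lesssim \res_n^{3/5}$.

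\textbf{Verification of \cref{ass:scaling} and main obstacle.} The bulk of the genuine work is not the balancing (which is routine optimization) but checking that the chosen $\nlscale$ is admissible, i.e.\ that $\gscale/\nlscale < 1/2$ and the third inequality of \cref{ass:scaling} hold for sufficiently small $\res_n$ and $\gscale$. In Case (i), $\gscale/\nlscale \asymp \gscale/\Psi^{1/4}$, and the condition $\gscale \lesssim \res_n^{5/9}$ together with $\res_n\lesssim \gscale$ ensures this ratio tends to zero. In Case (ii) the analogous check uses $\gscale \lesssim \res_n^{3/5}$. The condition $\res_n/\gscale + \sigma_\phi(\gscale) \lesssim t_0$ is automatic once $\gscale$ is small, since $\sigma_\phi(\gscale)\to 0$ and $\res_n \lesssim \gscale$. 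I would also remark that the upper thresholds $\res_n^{5/9}$ and $\res_n^{3/5}$ describe precisely the regime in which the ball-perturbation term $\gscale/\nlscale^2$ is no larger than the discrete-to-nonlocal error term $\Psi/\nlscale$, so outside these ranges one would instead balance $\nlscale$ against $\gscale/\nlscale^2$, giving different (and in fact worse with respect to $\gscale$) rates treated elsewhere.
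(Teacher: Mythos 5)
Your proposal matches the paper's proof essentially line for line: the paper makes exactly the same choices $\nlscale=\Psi^{1/4}$ and $\nlscale=\Psi^{1/2}$ and observes that the hypotheses $\gscale\lesssim\res_n^{5/9}$, $\gscale\lesssim\res_n^{3/5}$ guarantee $\gscale/\nlscale^2\lesssim\Psi/\nlscale$, after which Theorem~\ref{thm:general_convergence_result} gives the stated bounds. One small remark on your write-up: in Case (i) the domination condition should read $\nlscale\gtrsim\gscale/\Psi$ (not $\gscale^2/\Psi$), and the intermediate display should then be $\Psi^{5/4}\gtrsim\gscale$ (equivalently $(\res_n+\phi(\gscale))^{5/4}\gtrsim\gscale^{9/4}$); your final conclusion $\res_n+\phi(\gscale)\gtrsim\gscale^{9/5}$ is correct despite the slip, and your added check that the chosen $\nlscale$ is admissible under Assumption~\ref{ass:scaling} is a sensible detail that the paper leaves implicit.
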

\begin{proof}
For 1, we choose 
\[\nlscale = \left( \frac{\res_n + \phi(\gscale)}{\gscale}\right)^{\frac{1}{4}}.\]
The condition $\gscale \lesssim \res_n^{\frac{5}{9}}$ implies that
\[\frac{\gscale}{\nlscale^2}\lesssim \frac{\res_n + \phi(\gscale)}{\gscale \nlscale},\]
and so 1 follows from Theorem \ref{thm:general_convergence_result}.

The proof for 2 is similar, but instead we choose
\[\nlscale = \left( \frac{\res_n + \phi(\gscale)}{\gscale}\right)^{\frac{1}{2}}.\qedhere\]
\end{proof}}

\revision{For larger length scales we make a case distinction based on the regularity of the boundary, in other words, the decay of the function $\phi$ in \cref{ass:geodesic_euclidean_2}.}

\begin{corollary}[Large Length Scale Regime for Smooth Boundaries]\label{cor:large_scale_smooth}
Let $\phi(\gscale)\lesssim \gscale^{\revision{\frac{9}{5}}}$ (e.g., if $\domain$ is convex or $\partial\domain$ is at least $C^{1,\revision{\frac{4}{5}}}$).
Then we have the following convergence rates:
\begin{enumerate}
    \item For $\gscale\gtrsim\res_n^\frac{5}{9}$ it holds
    \begin{align*}
        \sup_{\domain_n}\abs{u-\vec u_n}
        =
        \sup_{\closure\domain}\abs{u-u_n} 
        \lesssim\gscale^\frac{1}{5}.
    \end{align*}
    \item If $\inf_{\closure\domain}\abs{\grad u}>0$, then for $\gscale \gtrsim\res_n^\frac{3}{5}$ it holds
    \begin{align*}
        \sup_{\domain_n}\abs{u-\vec u_n} 
        =
        \sup_{\closure\domain}\abs{u-u_n} 
        \lesssim \gscale^\frac{1}{3}.
    \end{align*}
\end{enumerate}
\end{corollary}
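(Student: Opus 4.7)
The plan is to apply \cref{thm:general_convergence_result} with a carefully optimized choice of the nonlocal length scale $\nlscale$, exploiting the boundary-regularity hypothesis $\phi(\gscale)\lesssim \gscale^{9/5}$ to ensure that the geodesic error term $\phi(\gscale)/(\gscale\nlscale)$ is never the dominant one. As a preliminary bookkeeping step, for each choice of $\nlscale$ below I would verify \cref{ass:scaling}: the lower bounds $\gscale\gtrsim \res_n^{5/9}$ (resp.\ $\gscale\gtrsim \res_n^{3/5}$) together with the implicit smallness of $\gscale$ trivially give $\gscale/\nlscale<1/2$, while $\sigma_\phi(\gscale)\to 0$ and $\res_n/\gscale\to 0$ make the remaining inequality hold for $n$ large.

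For part 1, I would take $\nlscale = \gscale^{1/5}$. With this choice $\gscale/\nlscale^2 = \gscale^{3/5}$, whose cube root is precisely $\gscale^{1/5}=\nlscale$, so the nonlocal-scale term and the $\gscale/\nlscale^2$ contribution balance. The constraint $\gscale\gtrsim \res_n^{5/9}$ is exactly what makes $\res_n/(\gscale\nlscale)=\res_n/\gscale^{6/5}\lesssim \gscale^{3/5}$, and the boundary hypothesis gives $\phi(\gscale)/(\gscale\nlscale)\lesssim \gscale^{9/5-6/5} = \gscale^{3/5}$. Substituting into the first estimate of \cref{thm:general_convergence_result} then produces $\nlscale + \sqrt[3]{\gscale^{3/5}} \lesssim \gscale^{1/5}$.

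For part 2, I would set $\nlscale = \gscale^{1/3}$, so that $\gscale/\nlscale^2 = \gscale^{1/3} = \nlscale$ balances the dominant term in the sharper (non-vanishing-gradient) bound. The condition $\gscale\gtrsim \res_n^{3/5}$ is exactly what is required to control $\res_n/(\gscale\nlscale) = \res_n/\gscale^{4/3}$ by $\gscale^{1/3}$, while $\phi(\gscale)/(\gscale\nlscale)\lesssim \gscale^{9/5 - 4/3} = \gscale^{7/15}$ is dominated by $\gscale^{1/3}=\gscale^{5/15}$ for small $\gscale$. The second estimate of \cref{thm:general_convergence_result} then directly yields the $\gscale^{1/3}$ rate.

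The substantive analytical work has already been done in \cref{thm:general_convergence_result}; the only content here is choosing $\nlscale$ to equalize the leading powers of $\gscale$, in contrast to \cref{cor:small_scale}, where one instead balances $\nlscale$ against the $\res_n/(\gscale\nlscale)$ contribution. It is worth noting that the exponent $9/5$ is precisely the one that makes the $\phi$-term no larger than the $\gscale/\nlscale^2$ term for \emph{both} optimal choices above, which is why a single boundary assumption suffices for both parts and why $C^{1,4/5}$ regularity (equivalent to $\phi(\gscale)\lesssim \gscale^{1+4/5}$ via a standard interior-cone argument on a smooth domain) emerges as the sharp threshold.
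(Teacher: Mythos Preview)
Your proof is correct and follows essentially the same approach as the paper: you choose $\nlscale=\gscale^{1/5}$ for part~1 and $\nlscale=\gscale^{1/3}$ for part~2, then verify that with these choices the terms $\res_n/(\gscale\nlscale)$ and $\phi(\gscale)/(\gscale\nlscale)$ are absorbed into $\gscale/\nlscale^2$, which in turn balances $\nlscale$. The explicit exponent bookkeeping and the remark on why $9/5$ is the natural threshold are a bit more detailed than the paper's version, but the argument is the same.
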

\begin{proof}
\revision{%
Since $\phi(\gscale)\lesssim\gscale^{\revision{\frac{9}{5}}}$, we get
\begin{align*}
    \frac{\phi(\gscale)}{\gscale\nlscale}\lesssim\frac{\gscale^\frac{4}{5}}{\nlscale} = \frac{\nlscale}{\gscale^\frac{1}{5}}\frac{\gscale}{\nlscale^2}.
\end{align*}
For both choices $\nlscale:=\gscale^\frac{1}{5}$ (respectively, $\nlscale:=\gscale^\frac{1}{3}$) it holds $\frac{\phi(\gscale)}{\gscale\nlscale}\lesssim\frac{\gscale}{\nlscale^2}$ and we can}
can absorb the term $\frac{\phi(\gscale)}{\gscale\nlscale}$ into $\frac{\gscale}{\nlscale^2}$.
\revision{Furthermore, for these two choices} it holds $\frac{\gscale}{\nlscale^2}\gtrsim\frac{\res_n}{\gscale\nlscale}$ and moreover $\sqrt[3]{\frac{\gscale}{\nlscale^2}}=\nlscale$ (respectively, ${\frac{\gscale}{\nlscale^2}}=\nlscale$).
\end{proof}

\revision{In the case of less regular boundaries, where $\gscale^\frac{9}{5} \ll \phi(\gscale)\ll \gscale$ as $\gscale\to 0$, the third error term dominates.
Note that this regime includes non-smooth domains, see \cref{prop:neumann_star} below.
}

\begin{corollary}[Large Length Scale Regime for Less Smooth Boundaries]\label{cor:less_smooth}
We have the following convergence rates:
\begin{enumerate}
    \item For \revision{$\phi(\gscale)^\frac{5}{9}\gtrsim \gscale\gtrsim \res_n^\frac{5}{9}$} it holds
    \begin{align*}
        \sup_{\domain_n}\abs{u-\vec u_n}
        =
        \sup_{\closure\domain}\abs{u-u_n} 
        \lesssim\left(\frac{\phi(\gscale)}{\gscale}\right)^\frac{1}{4}.
    \end{align*}
    \item If $\inf_{\closure\domain}\abs{\grad u}>0$, then for \revision{$\phi(\gscale)^\frac{3}{5}\gtrsim \gscale\gtrsim \res_n^\frac{3}{5}$} it holds
    \begin{align*}
        \sup_{\domain_n}\abs{u-\vec u_n} 
        =
        \sup_{\closure\domain}\abs{u-u_n} 
        \lesssim \left(\frac{\phi(\gscale)}{\gscale}\right)^\frac{1}{2}.
    \end{align*}
\end{enumerate}
\end{corollary}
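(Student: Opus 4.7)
The plan is to apply \cref{thm:general_convergence_result} with a specific choice of nonlocal length scale $\nlscale$ that balances the $\nlscale$ term with the dominant error term, which in the ``less smooth'' regime is the $\phi(\gscale)/(\gscale\nlscale)$ term rather than $\gscale/\nlscale^2$. This mirrors the strategy of \cref{cor:large_scale_smooth}, but with the roles of $\gscale$ and $\phi(\gscale)$ interchanged.

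For part 1, I would set $\nlscale := \bigl(\phi(\gscale)/\gscale\bigr)^{1/4}$ so that the ``base'' $\nlscale$ in the bound matches the cube root of $\phi(\gscale)/(\gscale\nlscale)$, both equalling $\bigl(\phi(\gscale)/\gscale\bigr)^{1/4}$. It then remains to verify that the other two terms under the cube root are absorbed: the bound $\gscale/\nlscale^2 \lesssim \phi(\gscale)/(\gscale\nlscale)$ is equivalent to $\gscale^2 \lesssim \phi(\gscale)\nlscale$, which reduces to $\gscale^{9/5} \lesssim \phi(\gscale)$, i.e.\ $\gscale \lesssim \phi(\gscale)^{5/9}$, which is precisely the hypothesis. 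The remaining bound $\res_n/(\gscale\nlscale) \lesssim \phi(\gscale)/(\gscale\nlscale)$ reduces to $\res_n \lesssim \phi(\gscale)$, which follows from chaining $\res_n \lesssim \gscale^{9/5} \lesssim \phi(\gscale)$, using $\gscale \gtrsim \res_n^{5/9}$ and the first hypothesis.

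For part 2, I would take $\nlscale := \bigl(\phi(\gscale)/\gscale\bigr)^{1/2}$ so that the first and third (now un-rooted) terms both equal $\bigl(\phi(\gscale)/\gscale\bigr)^{1/2}$. The absorption of $\gscale/\nlscale^2$ into $\phi(\gscale)/(\gscale\nlscale)$ now requires $\gscale \lesssim \phi(\gscale)^{3/5}$, exactly the new hypothesis, and the absorption of $\res_n/(\gscale\nlscale)$ again follows since $\res_n \lesssim \gscale^{5/3} \lesssim \phi(\gscale)$, using $\gscale \gtrsim \res_n^{3/5}$.

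The only subtlety — and the sole place where one could trip up — is bookkeeping the two separate ``absorption'' inequalities (for the $\res_n$-term and the $\gscale/\nlscale^2$-term) and checking that both are implied by the two-sided assumption $\phi(\gscale)^{\alpha} \gtrsim \gscale \gtrsim \res_n^{\alpha}$ with the appropriate exponent ($\alpha=5/9$ or $\alpha=3/5$). Apart from this, the corollary is a direct optimization of the bound in \cref{thm:general_convergence_result} in the regime where $\phi(\gscale)/\gscale$ dominates the local approximation error, and no genuinely new analytical input beyond the main theorem is needed.
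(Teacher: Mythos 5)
Your proof is correct and follows essentially the same route as the paper: choose $\nlscale=(\phi(\gscale)/\gscale)^{1/4}$ (resp.\ $(\phi(\gscale)/\gscale)^{1/2}$) so that $\nlscale$ matches $\sqrt[3]{\phi(\gscale)/(\gscale\nlscale)}$ (resp.\ $\phi(\gscale)/(\gscale\nlscale)$), and check that the other two error terms are dominated under the stated two-sided restriction on $\gscale$. Your bookkeeping of the $\res_n$-absorption via $\res_n\lesssim\gscale^{9/5}\lesssim\phi(\gscale)$ (resp.\ $\res_n\lesssim\gscale^{5/3}\lesssim\phi(\gscale)$) is in fact slightly more explicit than the paper's phrasing.
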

\begin{proof}
\revision{Assuming for a moment that $\phi(\gscale)\gtrsim \frac{\gscale^2}{\nlscale}$ and $\phi(\gscale)\gtrsim\res_n$ the term} $\frac{\phi(\gscale)}{\gscale\nlscale}$ dominates $\frac{\res_n}{\gscale\nlscale}$ and $\frac{\gscale}{\nlscale^2}$. 
Furthermore, for $\nlscale:=\left({\phi(\gscale)}/{\gscale}\right)^\frac{1}{4}$ (respectively, $\nlscale:=\left({\phi(\gscale)}/{\gscale}\right)^\frac{1}{2}$) it holds $\sqrt[3]{\frac{\phi(\gscale)}{\gscale\nlscale}}=\nlscale$ (respectively, ${\frac{\phi(\gscale)}{\gscale\nlscale}}=\nlscale$).
\revision{Note that for these choices of $\nlscale$ the condition $\phi(\gscale)\gtrsim\frac{\gscale^2}{\nlscale}$ is equivalent to $\phi(\gscale)^\frac{5}{9}\gtrsim\gscale$ or $\phi(\gscale)^\frac{3}{5}\gtrsim\gscale$, respectively.} 
\end{proof}
\begin{remark}[Relation to previous results]
In \cite[Theorem 3.1.3]{smart2010infinity} convergence rates are established for the special case where $\domain_n=[0,1]_\gscale^2$ is a grid, $\constr_n=\domain_n\cap\partial[0,1]^2$ consists of boundary vertices, and the weight function is chosen as $\eta = 1_{[0,1]}$. The condition that $\Omega$ is a grid in two dimensions is not directly used in \cite[Theorem 3.1.3]{smart2010infinity} and can be relaxed without any difficulty. Using the arguments in our paper, the condition that $\constr_n=\domain_n\cap\partial[0,1]^2$ can be relaxed as well, provided the boundary $\partial \Omega$ is $C^{2,\alpha}$, or $\Omega$ is convex.  However, the assumption $\eta=1_{[0,1]}$ seems essential to this result, since pointwise consistency for the discrete graph $\infty$-Laplacian is established using comparison with cones. Translating these results to our setting we would obtain 
\begin{align*}
    \sup_{\domain_n}\abs{u-\vec u_n} = \sup_{\closure\domain}\abs{u-u_n} \lesssim h + \left(\frac{\res_n + \phi(\gscale)}{\gscale^2}\right)^\frac{1}{3}.
\end{align*}
In the case that $\inf_{\closure\domain}\abs{\grad u}>0$, the result can be improved to read
\begin{align*}
    \sup_{\domain_n}\abs{u-\vec u_n} = \sup_{\closure\domain}\abs{u-u_n} \lesssim h + \frac{\res_n + \phi(\gscale)}{\gscale^2}.
\end{align*}
Note that these rates only apply to very large stencil sizes and become meaningless as $\gscale$ approaches $\res_n^\frac{1}{2}$. In contrast, our rates work for arbitrary small graph length scales above the connectivity threshold. In addition, for the rates to be non-vacuous, it is necesary that $\phi(\gscale) \ll \gscale^2$, which requires the boundary to be uniformly $C^{2}$ (or, say, $C^{2,\alpha}$ for $\alpha>0$) when $\Omega$ is non-convex. In constrast, our results only require $\phi(h) \ll h$, which holds for non-convex domains with uniformly $C^1$ boundary. Finally, it does not seem clear how to relax the restrictive condition that $\eta=1_{[0,1]}$ in \cite[Theorem 3.1.3]{smart2010infinity}, which is important for applications to semi-supervised learning, where a very common choice is the Gaussian weight $\eta(t) = \exp(-t^2/(2\sigma^2))1_{[0,1]}$.
\end{remark}

\subsection{Extensions of our Main Results}
\label{sec:ext}

Lastly, we mention two extensions of these results that would be natural to pursue.
\begin{enumerate}
\item In \cite{calder2019consistency} a reweighted Lipschitz learning problem was studied of the form \labelcref{eq:LL} with weights
\begin{equation}\label{eq:reweighted}
w_n(\vec x,\vec y) = d_n(\vec x)^\alpha d_n(\vec y)^\alpha\eta_h(|\vec x-\vec y|),
\end{equation}
where $d_n(\vec x)$ is the degree of node $\vec x$, given by
\[d_n(\vec x) = \sum_{\vec y\in \domain_n}\eta_h(|\vec x-\vec y|).\]
This reweighting modifes the continuum infinity Laplace equation (see \cite[Theorem 2.4]{calder2019consistency}) by adding a drift term along the gradient of the data density; in particular, the continuum equation becomes
\[\Delta_\infty u + 2\alpha \nabla \log \rho \cdot \nabla u = 0\]
in the case that the point cloud $\domain_n$ has density $\rho(x)$ locally as $n\to \infty$. The motivation for the reweighting was to introduce the density $\rho$ into the continuum model in order to improve the semi-supervised classification results. Indeed, the whole point of semi-supervised learning is to use properties of the unlabeled data, e.g., its distribution, to improve classification results. Numerical experiments in \cite{calder2019consistency} established improved performance of the reweighted Lipschitz learning classifier for both synthetic and real data. 

It would be natural to extend the results in this paper to the reweighted model from \cite{calder2019consistency} with weights given in \labelcref{eq:reweighted}. We expect the main modifications to be to the notion of geodesic distance on the domain $\domain$, which will now be weighted by $\rho^{-2\alpha}$. The geodesic distance functions, which are the continuum limit of the graph versions, would be solutions of the eikonal equation $\rho^{2\alpha}|\nabla u|=1$, and the continuum model would be modified accordingly. 
\item In machine learning, it is common to take the \emph{manifold assumption} \cite{goodfellow2016machine}, whereby the data $\Omega_n\subset \R^d$ is sampled from an underlying data manifold $\mathcal{M}\subset \R^d$ of much smaller intrinsic dimension $\text{dim}(\mathcal{M}) = D \ll d$. It would also be natural to extend our results to this setting.
Since an embedded manifold can be equipped with the geodesic metric inherited from the metric on the ambient space, and furthermore comparison with distance functions is a purely metric notion, we expect that our arguments translate to this setting in a straightforward way.
\end{enumerate}


\section{Discrete: Graph Infinity Laplacian Equation}
\label{sec:discrete_problem}

In this section we investigate the discrete problem of the graph infinity Laplacian equation \labelcref{eq:discrete_problem}.
In fact, we will work in an abstract weighted graph setup which is more general than the geometric graphs which we use for our final discrete to continuum results.
The main result in this section is that graph infinity harmonic functions satisfy a comparison principle with graph distance functions.

\subsection{Weighted Graphs}

Let $G=(X,W)$ be a weighted graph with nodes $X$ and edge weights $W=(w_{xy})_{x,y\in X}$. The edge weights are assumed to be nonnegative ($w_{xy}\geq 0$) and symmetric ($w_{xy}=w_{yx}$). 
For $\constr\subset X$ and $g:\constr \to \R$, we consider the graph $\infty$-Laplace equation 
\begin{equation}\label{eq:graph_inflap}
\left\lbrace
\begin{aligned}
\L^G_\infty u &= 0,&& \text{in } X\setminus \constr,\\
u &= g,&& \text{on } \constr,
\end{aligned}
\right.
\end{equation}
where the graph $\infty$-Laplacian is defined by
\begin{equation}\label{eq:GL}
\L^G_\infty u(x) = \min_{y\in X} w_{xy}(u(y) - u(x)) +  \max_{y\in X} w_{xy}(u(y) - u(x)).
\end{equation}
The well-posedness of \labelcref{eq:graph_inflap} was established in \cite{calder2019consistency} and is discussed in more detail in \cref{sec:comp_graph_cones} below. In graph-based semi-supervised learning, \labelcref{eq:graph_inflap} is referred to as Lipschitz learning \cite{calder2019consistency,roith2021continuum,kyng2015algorithms}, since it propagates the labels $u=g$ on $\constr$ to the entire graph in a Lipschitz continuous manner. 
The discrete $\infty$-Laplace equation has also been used for image inpainting, which is concerned with filling in missing parts of an image (e.g., in art restoration), see \cite{elmoataz2015p}.

In this section we study the discrete problem \labelcref{eq:graph_inflap} and show that solutions satisfy a comparison principle with graph distance functions which is the discrete analogue of \cref{def:comparison_distance_functions}.

We define the graph distance function $d_G:X\times X \to \R\cup \{\infty\}$ by
\begin{equation}\label{eq:graph_dist}
d_G(x,y) = \min \left\{\sum_{i=1}^{N}w_{x_{i-1}x_{i}}^{-1} \st N\in\N,\,\{x_i\}_{i=0}^N\subset X,\,x_0=x,\,x_N=y \right\}
\end{equation}
for $x\neq y$, and $d_G(x,x)=0$. 
Here we use the convention that if $w_{x_{i-1}x_i}=0$ for any $i$ then $w_{x_{i-1}x_i}^{-1}=\infty$. 
We note that since the graph $G$ is symmetric, we have $d_G(x,y)=d_G(y,x)$.  

The notion of graph connectivity is now simple to state in terms of the graph distance function.
\begin{definition}\label{def:connectivity}
We say that $G$ is \emph{connected}  if $d_G(x,y) < \infty$ for all $x,y\in X$. 
We say that $G$ is \emph{connected to $\constr$} if for every $x\in X$ there exists $y\in \constr$ with $d_G(x,y) < \infty$.
\end{definition}
We remark that a connected graph is also connected to any constraint set $\constr \subset X$. However, a disconnected graph can still be connected to $\constr$, provided $\constr$ has a non-empty intersection with every connected component of $G$. 

In order to study comparison with graph distance functions, we need a notion of graph boundary.  For $X'\subset X$ we define the boundary of $X'$, denoted $\partial X'$, as the set
\begin{equation}\label{eq:graph_boundary}
\partial X' = \{x\in X\setminus X' \, : \, w_{xy}>0 \text{ for some }y\in X'\}.
\end{equation}
In other words, the boundary $\partial X'$ consists of all nodes in $X\setminus X'$ that are connected to some node in $X'$. This is often called the \emph{exterior} boundary of $X'$. The \emph{interior} boundary can be defined similarly as all nodes in $X'$ that are connected to a node in $X\setminus X'$. While we only use the notion of exterior boundary defined in \labelcref{eq:graph_boundary} in this paper, the results in this section hold, with minor modifications, for the interior boundary. We also define the \emph{closure}  of $X'$, denoted $\closure{X'}$, to be the set
\begin{equation}\label{eq:closure}
\closure{X'}  = X' \cup \partial X'.
\end{equation}

\subsection{Comparison with Graph Distance Functions}
\label{sec:comp_graph_cones}

Our main result in this section shows that graph infinity harmonic functions satisfy comparison with graph distance functions.
\begin{theorem}\label{thm:comp_graph_cones}
Assume $G$ is connected. 
Let $X'\subsetneq X$ and suppose that $u:\closure{X'}\to \R$ satisfies $-\L^G_\infty u(x) \leq 0$ for all $x\in X'$.
Then for every $a \geq 0$ and $z\in X\setminus X'$ we have
\begin{equation}\label{eq:comp_graph_cones}
\max_{\closure{X'}}(u - a\,d_G(\cdot,z)) = \max_{\partial X'}(u -  a\,d_G(\cdot,z)).
\end{equation}
If $u:\closure{X'}\to \R$ satisfies $-\L^G_\infty u(x) \geq 0$ for all $x\in  X'$, then for every $a \geq 0$ and $z\in X\setminus X'$ we have
\begin{equation}\label{eq:comp_graph_cones_below}
\min_{\closure{X'}}(u + a\,d_G(\cdot,z)) = \min_{\partial X'}(u +  a\,d_G(\cdot,z))
\end{equation}
\end{theorem}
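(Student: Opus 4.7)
The plan is to reduce both parts of the theorem to a single argument: since $\L^G_\infty(-u) = -\L^G_\infty u$, replacing $u$ by $-u$ exchanges sub- and supersolutions and turns $\max_{\closure{X'}}(u - a\,d_G(\cdot,z))$ into $-\min_{\closure{X'}}((-u) + a\,d_G(\cdot,z))$ (and similarly on $\partial X'$), so \labelcref{eq:comp_graph_cones_below} follows once \labelcref{eq:comp_graph_cones} is established. The direction $\max_{\closure{X'}} v \geq \max_{\partial X'} v$ with $v := u - a\,d_G(\cdot,z)$ is trivial, so I would argue the reverse inequality by contradiction.

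For the main case $a > 0$, suppose that $\max_{\closure{X'}} v > \max_{\partial X'} v$, so the set $M := \{x \in \closure{X'} \st v(x) = \max_{\closure{X'}} v\}$ lies entirely in $X'$; since the nonzero edge structure at any interior point involves only finitely many $y$, I may pick $x^* \in M$ that maximizes $d_G(\cdot, z)$ over $M$. The key calculation exploits two structural properties of $d_G$: first, since $x^* \neq z$, the first node $x_1$ on a shortest graph path from $x^*$ to $z$ satisfies $d_G(x_1, z) - d_G(x^*, z) = -1/w_{x^* x_1}$ by \labelcref{eq:graph_dist}; second, the triangle inequality for $d_G$ yields $d_G(y, z) - d_G(x^*, z) \leq 1/w_{x^* y}$ whenever $w_{x^* y} > 0$. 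Since $x_1 \in \closure{X'}$, the inequality $v(x_1) \leq v(x^*)$ combined with the first fact gives $w_{x^* x_1}(u(x_1) - u(x^*)) \leq -a$, so $\min_y w_{x^* y}(u(y) - u(x^*)) \leq -a$. The subsolution hypothesis $-\L^G_\infty u(x^*) \leq 0$ then forces $\max_y w_{x^* y}(u(y) - u(x^*)) \geq a > 0$, producing $y^* \in X$ with $w_{x^* y^*}(u(y^*) - u(x^*)) \geq a$ and necessarily $w_{x^* y^*} > 0$, so $y^* \in \closure{X'}$. Combining $v(y^*) \leq v(x^*)$ with the second fact yields
\begin{equation*}
a \leq w_{x^* y^*}(u(y^*) - u(x^*)) \leq a\,w_{x^* y^*}(d_G(y^*, z) - d_G(x^*, z)) \leq a,
\end{equation*}
forcing equality throughout. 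This simultaneously gives $v(y^*) = v(x^*)$, i.e.\ $y^* \in M$, and $d_G(y^*, z) = d_G(x^*, z) + 1/w_{x^* y^*} > d_G(x^*, z)$, contradicting the choice of $x^*$.

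The degenerate case $a = 0$ reduces to a graph maximum principle, which I would prove in the same spirit by propagation. At any interior $u$-maximizer $x_0 \in X'$, the bounds $w_{x_0 y}(u(y) - u(x_0)) \leq 0$ together with $\min + \max \geq 0$ force $u(y) = u(x_0)$ for every neighbor $y$ with $w_{x_0 y} > 0$. Using connectivity of $G$ to fix a positive-weight path from $x_0$ to some node in $X \setminus X'$ and iterating this propagation, I reach a $u$-maximizer on $\partial X'$, contradicting the assumed gap.

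The main conceptual point in the $a > 0$ argument is the precise matching between the operator $\L^G_\infty$, which weights discrete differences by $w_{xy}$, and the graph distance $d_G$, which sums \emph{reciprocal} weights $1/w_{x_{i-1} x_i}$ along paths. This inverse pairing is exactly what allows a shortest-path edge to saturate the triangle inequality for $d_G$ and converts the subsolution information into the propagation of $v$-maximality in the direction of increasing $d_G(\cdot, z)$; it is the discrete counterpart of comparison with cones for the continuum infinity Laplacian.
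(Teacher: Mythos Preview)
Your proof is correct and takes a genuinely different route from the paper's. The paper proceeds modularly: it first shows via the graph eikonal equation (\cref{lem:eikonal_graph}) that $v=d_G(\cdot,z)$ satisfies $-\L^G_\infty v \geq 0$ on $X\setminus\{z\}$, then observes that $\L^{G'}_\infty = \L^G_\infty$ on $X'$ for the subgraph $G'=(\closure{X'},W')$, verifies via \cref{prop:connectivity} that $G'$ is connected to $\partial X'$, and finally invokes the black-box comparison principle \cref{thm:graph_comp_princ} to conclude. Your argument is instead self-contained: you bypass both the eikonal lemma and the comparison principle by working directly with a maximizer of $u-a\,d_G(\cdot,z)$ chosen to have maximal $d_G(\cdot,z)$, and you exploit the shortest-path structure of $d_G$ to push the maximizer strictly farther from $z$, obtaining a contradiction. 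The trade-off is that the paper's proof handles $a\geq 0$ uniformly (since $a\,d_G(\cdot,z)$ is a supersolution for all $a\geq 0$), whereas you need a separate propagation argument for $a=0$; on the other hand, your approach makes the inverse pairing between the edge weights in $\L^G_\infty$ and the reciprocal weights in $d_G$ very transparent, which is a nice structural observation.

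One small remark: your justification for selecting $x^*\in M$ maximizing $d_G(\cdot,z)$ (``the nonzero edge structure at any interior point involves only finitely many $y$'') is local finiteness, which by itself does not guarantee that $M$ is finite or that the supremum of $d_G(\cdot,z)$ over $M$ is attained. In the paper's setting the vertex set $X=\domain_n$ is finite, so this is not an issue, but it would be cleaner to simply invoke finiteness of $X$ here.
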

\begin{remark}
One could also investigate the converse statement, namely that comparison with graph distance functions implies being a solution to the graph infinity Laplacian equation \labelcref{eq:graph_inflap}.
However, since this is not needed for our result we do not consider this question here.
Note also that we do not have to assume that $X'$ is a connected subset of $X$, as it is typically done for defining AMLEs (see \labelcref{eq:AMLE}).
\end{remark}

The proof of \cref{thm:comp_graph_cones} relies on a discrete comparison principle for \labelcref{eq:graph_inflap}, which was established in \cite{calder2019consistency}. We include the statement of the result below for reference. 
\begin{theorem}[{\cite[Theorem 3.1]{calder2019consistency}}]\label{thm:graph_comp_princ}
Assume that $G$ is connected to $\constr$, and let $u,v:X\to \R$ satisfy 
\[-\L^G_\infty u(x) \leq 0 \leq -\L^G_\infty v(x) \ \ \text{for all } x\in X\setminus \constr.\] 
Then
\[\max_{X}(u-v) = \max_{\constr}(u-v).\]
\end{theorem}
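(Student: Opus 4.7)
The plan is to apply the graph comparison principle (\cref{thm:graph_comp_princ}) to $u$ and the distance-based comparison function
\[v(x) := a\,d_G(x,z), \quad x\in X,\]
which is finite on all of $X$ since $G$ is connected and $a\geq 0$. The core technical step is to show that $v$ is a supersolution on $X'$, i.e., $-\L^G_\infty v(x)\geq 0$ for every $x\in X'$. Granting this, \cref{thm:graph_comp_princ} yields $\max_{\overline{X'}}(u-v) = \max_{\partial X'}(u-v)$, which is precisely \eqref{eq:comp_graph_cones}. The dual statement \eqref{eq:comp_graph_cones_below} then follows by applying \eqref{eq:comp_graph_cones} to $-u$, since $-u$ is a subsolution iff $u$ is a supersolution.

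To verify the supersolution property of $v$, fix $x\in X'$. For any $y$ with $w_{xy}>0$ one has $d_G(x,y)\leq w_{xy}^{-1}$, and combining this with the triangle inequality for $d_G$ yields
\[\bigl|\,d_G(y,z) - d_G(x,z)\,\bigr| \leq w_{xy}^{-1},\]
so $w_{xy}(v(y)-v(x)) \in [-a,a]$. Hence $\max_{y\in X} w_{xy}(v(y)-v(x)) \leq a$ and $\min_{y\in X} w_{xy}(v(y)-v(x)) \geq -a$. Since $x\neq z$ and $d_G(x,z)<\infty$, I select $y^\ast$ as the second vertex of a shortest path in $G$ from $x$ to $z$. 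Truncating that path proves $d_G(y^\ast,z) \leq d_G(x,z) - w_{xy^\ast}^{-1}$, which matched with the lower bound above gives equality, so $w_{xy^\ast}(v(y^\ast)-v(x)) = -a$ and the minimum is attained. Therefore $\L^G_\infty v(x) \leq -a + a = 0$.

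To invoke \cref{thm:graph_comp_princ}, I reduce to the induced subgraph $G'=(\overline{X'},(w_{xy})_{x,y\in\overline{X'}})$. By the definition of the exterior boundary \eqref{eq:graph_boundary}, every neighbor (in $G$) of a node $x\in X'$ already lies in $\overline{X'}$, so $\L^{G'}_\infty = \L^G_\infty$ at every such $x$, and both the subsolution inequality for $u$ and the supersolution inequality for $v$ transfer unchanged to $G'$. Moreover, $G'$ is connected to $\partial X'$ in the sense of \cref{def:connectivity}: any $x\in X'$ admits a $G$-path leaving $X'$ (since $G$ is connected and $X'\subsetneq X$), and the first vertex of that path in $X\setminus X'$ belongs to $\partial X'$, yielding the required $G'$-path. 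Applying \cref{thm:graph_comp_princ} on $G'$ with labeled set $\partial X'$ concludes the proof. The main obstacle I anticipate is the supersolution verification: while the two-sided bound on $w_{xy}(v(y)-v(x))$ is immediate from the triangle inequality, attaining the minimum relies on the combinatorial fact that $d_G$ is realized by an actual path whose first edge emanates from $x$, which is where the explicit definition \eqref{eq:graph_dist} of $d_G$ is essential.
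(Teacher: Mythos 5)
There is a fundamental mismatch here: what you have written is not a proof of the stated result. The statement is the discrete comparison principle \cref{thm:graph_comp_princ} itself, for an \emph{arbitrary} pair of functions $u,v:X\to\R$ satisfying $-\L^G_\infty u\leq 0\leq -\L^G_\infty v$ on $X\setminus\constr$, with the conclusion $\max_X(u-v)=\max_{\constr}(u-v)$. Your proposal instead proves the comparison-with-graph-distance-functions result (the paper's \cref{thm:comp_graph_cones}), and it does so \emph{by invoking} \cref{thm:graph_comp_princ} as a black box on the subgraph $G'=(\closure{X'},W')$. As an argument for \cref{thm:graph_comp_princ} this is circular: the theorem you are asked to establish is exactly the tool your plan rests on, and nowhere do you supply an independent argument for it. The ingredients you do verify --- that $a\,d_G(\cdot,z)$ is a supersolution (essentially \cref{lem:eikonal_graph} plus the eikonal-to-supersolution step) and that $G'$ is connected to $\partial X'$ (\cref{prop:connectivity}) --- are correct and closely mirror the paper's proof of \cref{thm:comp_graph_cones}, but they belong to that later result, not to the one in question.

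Note that the paper does not reprove this statement either; it quotes it verbatim from \cite[Theorem 3.1]{calder2019consistency}. A genuine proof has to work directly with general $u$ and $v$: one argues by contradiction assuming $\max_X(u-v)>\max_{\constr}(u-v)$, looks at the set of maximizers of $u-v$ in $X\setminus\constr$ (with a suitable tie-breaking, e.g.\ selecting among them a point maximizing $u$), and uses the two inequalities $-\L^G_\infty u\leq 0\leq -\L^G_\infty v$ to show the maximum propagates to a graph neighbor; connectivity to $\constr$ then pushes the maximum into the constraint set, a contradiction. The degeneracy of the operator (only a min and a max term, no averaging) is precisely why this propagation argument, rather than a one-line evaluation at a maximum point, is needed, and none of that appears in your proposal.
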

We note that it follows from \cref{thm:graph_comp_princ} and the Perron method that \labelcref{eq:graph_inflap} has a unique solution $u$ whenever $G$ is connected to $\constr$. We refer to \cite[Theorem 3.4]{calder2019consistency} and \cite[Theorem 4]{calder2018game} for the proof of this result.

We now establish that graph distance functions satisfy the graph eikonal equation.
\begin{lemma}\label{lem:eikonal_graph}
Assume $G$ is connected and let $z\in X$. Then the graph distance function $u:=d_G(\cdot,z)$ is the unique solution of the graph eikonal equation
\begin{equation}\label{eq:eikonal_graph}
\max_{y\in X} w_{xy}(u(x) - u(y)) = 1 \ \ \text{for all }x\in X\setminus \{z\}
\end{equation}
satisfying $u(z)=0$.
\end{lemma}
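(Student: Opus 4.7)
The plan is to prove the two directions separately: first that $u:=d_G(\cdot,z)$ satisfies the eikonal equation with $u(z)=0$, and then uniqueness. Both parts reduce to a path-comparison argument, exploiting the fact that the graph distance is defined as an optimal path length with edge-cost $w_{xy}^{-1}$.

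For the \emph{existence} direction, observe first that $u(z)=0$ holds by definition. To show $\max_{y\in X} w_{xy}(u(x)-u(y)) \leq 1$ for $x\neq z$, I would apply the triangle inequality $d_G(x,z)\leq w_{xy}^{-1}+d_G(y,z)$ (valid for every $y$ with $w_{xy}>0$, and trivial when $w_{xy}=0$ under the convention that the product is zero), rearrange, and multiply by $w_{xy}$. For the reverse inequality $\max \geq 1$, pick any optimal path $x=x_0,x_1,\dots,x_N=z$ realizing the minimum in the definition of $d_G(x,z)$. Setting $y:=x_1$, the truncated tail $x_1,\dots,x_N$ is admissible for computing $d_G(y,z)$, so $u(y)\leq u(x)-w_{xy}^{-1}$, i.e.\ $w_{xy}(u(x)-u(y))\geq 1$.

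For \emph{uniqueness}, let $v$ be another solution and prove both $v\leq u$ and $v\geq u$. The upper bound follows directly: the eikonal equation gives $v(x)-v(y)\leq w_{xy}^{-1}$ for every edge $(x,y)$ with $w_{xy}>0$ on any path from $x$ to $z$, and chaining the telescoping sum yields $v(x)\leq v(z)+\sum_{i=1}^N w_{x_{i-1}x_i}^{-1}$; infimizing over paths gives $v(x)\leq d_G(x,z)=u(x)$. For the lower bound I would build a greedy descent sequence: given $x_k\neq z$, the eikonal equation supplies some $x_{k+1}$ attaining the maximum, so $v(x_{k+1})=v(x_k)-w_{x_k x_{k+1}}^{-1}$ with $w_{x_k x_{k+1}}>0$. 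The sequence $v(x_k)$ strictly decreases, so no vertex repeats; in a finite graph the sequence must terminate, and it can only terminate at $z$ (since every other vertex satisfies the equation and thus admits an extension). Telescoping along the resulting path from $x$ to $z$ gives $v(x)=\sum_{k=0}^{N-1} w_{x_k x_{k+1}}^{-1}\geq d_G(x,z)=u(x)$.

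The main obstacle is the termination of the greedy descent sequence for $v\geq u$. In the finite-graph setting relevant to this paper (where $X=\Omega_n$), termination is immediate from strict monotonicity and pigeonhole, so the argument goes through cleanly. If one wanted to push to infinite connected graphs one would need an additional local finiteness or coercivity hypothesis; alternatively, one could bypass the constructive descent by recasting both $u$ and $v$ as solutions of the graph equation $-\min_y w_{xy}(\cdot(y)-\cdot(x))=1$ and invoking a comparison principle for this one-sided eikonal equation in the spirit of \cref{thm:graph_comp_princ}. Since only the finite case is needed downstream, I would proceed with the direct descent argument.
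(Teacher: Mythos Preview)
Your proof is correct. The existence part is essentially identical to the paper's: the paper records the dynamic programming principle
\[
u(x)=\min_{y\in X}\{u(y)+w_{xy}^{-1}\}
\]
and rearranges it to the eikonal equation, which is precisely the combination of your triangle-inequality upper bound and optimal-path lower bound.

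For uniqueness, however, the approaches diverge. The paper argues by a comparison/maximum principle: for $\lambda>1$ it looks at a maximizer $x_0$ of $v-\lambda\,d_G(\cdot,z)$ over $X$, and if $x_0\neq z$ it plugs into the eikonal equation for both $v$ and $d_G(\cdot,z)$ to obtain the contradiction $1\geq\lambda$; hence $x_0=z$ and $v\leq \lambda\,d_G(\cdot,z)$, and letting $\lambda\downarrow 1$ gives $v\leq u$ (the other inequality uses $\lambda<1$ and a minimizer). Your route is constructive instead: telescoping the edgewise bound $v(x)-v(y)\leq w_{xy}^{-1}$ along an arbitrary path for $v\leq u$, and building a greedy descent path from the eikonal maximizer for $v\geq u$. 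Both arguments implicitly need $X$ finite---the paper to guarantee the maximum of $v-\lambda d_G(\cdot,z)$ is attained, you to guarantee the descent sequence terminates---so neither has an advantage in generality. Your version is more elementary and path-based; the paper's version is in the spirit of PDE comparison principles and meshes with the maximum-principle techniques used throughout the rest of the paper (e.g., \cref{thm:graph_comp_princ}).
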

\begin{proof}
It follows directly from the definition of the graph distance $d_G$, \labelcref{eq:graph_dist}, that \revision{$u=d_G(\cdot,z)$} satisfies the dynamic programming principle
\begin{equation}\label{eq:dpp}
u(x)=\min_{y\in X}\left\{u(y)+w_{xy}^{-1}\right\},\quad x\in X\setminus\{z\}.
\end{equation}
As above, we take $w_{xy}^{-1}=\infty$ when $w_{xy}=0$. Since $G$ is connected, for every $x\in X$ there exists $y\in X$ with $w_{xy}>0$, so the minimum in \labelcref{eq:dpp} can be restricted to $y\in X$ with $w_{xy}>0$ (i.e., graph neighbors of $x$). We note that \labelcref{eq:dpp} can be rearranged to show that
\[\max_{y\in X}\{u(x) - u(y) - w_{xy}^{-1}\} = 0,\quad x\in X\setminus\{z\}.\]
It follows that
\begin{equation}\label{eq:eikonal_pos}
\max_{y\in X}\{w_{xy}(u(x) - u(y)) - 1\}  =0, \quad x\in X\setminus\{z\},
\end{equation}
which is equivalent to \labelcref{eq:eikonal_graph}.

To show uniqueness, let $u:X\to \R$ be a solution of \labelcref{eq:eikonal_graph} satisfying $u(z)=0$. Let $\lambda>1$ and let $x_0\in X$ be a point at which $u-\lambda\,d_G(\cdot,z)$ attains its maximum over $X$. Then we have
\[u(x_0) - u(x) \geq \lambda (d_G(x_0,z) - d_G(x,z)), \ \ x\in X.\]
If $x_0\neq z$, then we have
\[1 = \max_{y\in X} w_{x_0y}(u(x_0) - u(y)) \geq  \lambda\max_{y\in X} w_{x_0y}(d_G(x_0,z) - d_G(y,z))  = \lambda > 1,\]
which is a contradiction. Therefore $x_0=z$ and so
\[\max_{X}(u - \lambda\,d_G(\cdot,z)) = u(z) - \lambda\,d_G(z,z) = 0.\]
Since $\lambda>1$ is arbitrary, it follows that $u\leq d_G(\cdot,z)$. A similar argument, examining the minimum of $u - \lambda\,d_G(\cdot,z)$ for $0 < \lambda <1$ shows that $u \geq d_G(\cdot,z)$, which completes the proof.
\end{proof}

In order to apply \cref{thm:graph_comp_princ} to prove \cref{thm:comp_graph_cones}, we need to show that \revision{the subgraph of $G$ with vertices $\closure X'$} is connected to its boundary $\partial X'$ in the sense of \cref{def:connectivity}.  
\begin{proposition}\label{prop:connectivity}
Assume $G$ is connected. Let  $X' \subsetneq X$ and let $G'=(\closure{X'},W')$ be the subgraph of $G$ with weights $W'=(w_{xy})_{x,y\in \closure{X'}}$. Then $G'$ is connected to $\partial X'$. 
\end{proposition}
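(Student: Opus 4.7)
The plan is to verify the definition of ``connected to $\partial X'$'' directly: for each $x\in\closure{X'}$ we exhibit a vertex $y\in\partial X'$ and a path from $x$ to $y$ that stays inside $\closure{X'}$ and uses only edges of positive weight. The only case that requires work is $x\in X'$, since $x\in\partial X'$ is immediate (take $y=x$).

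First I would fix $x\in X'$ and use the hypothesis $X'\subsetneq X$ to pick some $z\in X\setminus X'$. Since $G$ is connected, \cref{def:connectivity} gives $d_G(x,z)<\infty$, so by the definition \labelcref{eq:graph_dist} of $d_G$ there exists a finite sequence $x=x_0,x_1,\dots,x_N=z$ with $w_{x_{i-1}x_i}>0$ for all $i=1,\dots,N$.

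Next, I would locate the first exit of this path from $X'$: let
\[
k := \min\{\,i\in\{1,\dots,N\} \st x_i\notin X'\,\},
\]
which is well-defined because $x_N=z\notin X'$. By minimality, $x_{k-1}\in X'$, while $x_k\in X\setminus X'$. Since $w_{x_{k-1}x_k}>0$ and $x_{k-1}\in X'$, the defining condition \labelcref{eq:graph_boundary} of $\partial X'$ gives $x_k\in\partial X'$. In particular, the truncated path $x_0,x_1,\dots,x_k$ lies entirely in $X'\cup\partial X'=\closure{X'}$, and all of its edges have positive weight. These edges are therefore edges of the subgraph $G'$, so from \labelcref{eq:graph_dist} applied to $G'$ we obtain
\[
d_{G'}(x,x_k) \leq \sum_{i=1}^{k}w_{x_{i-1}x_i}^{-1} < \infty,
\]
which is exactly the statement that $x$ is connected in $G'$ to the boundary vertex $x_k\in\partial X'$.

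There is no real obstacle here; the argument is a one-line walk along a path in $G$, truncated at its first exit from $X'$. The only subtlety worth flagging is that the statement would fail if $X'=X$ (then $\partial X'=\emptyset$), which is exactly why the hypothesis $X'\subsetneq X$ is imposed, and that we implicitly use the symmetry $w_{xy}=w_{yx}$ so that the edges of the truncated path are legitimate edges of $G'$ regardless of orientation.
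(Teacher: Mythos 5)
Your argument is correct and follows essentially the same route as the paper's: take a path in $G$ from $x\in X'$ to some point outside $X'$, truncate at the first index where the path leaves $X'$, and observe that the exit vertex is in $\partial X'$ while the truncated path stays in $\closure{X'}$ with positive-weight edges. The extra remark about symmetry $w_{xy}=w_{yx}$ is a reasonable side note but not needed beyond ensuring the subgraph weights are well defined, which the paper takes for granted.
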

\begin{proof}
\revision{If $x\in\closure X'\setminus X'$ then $x\in\partial X'$ and $x$ is trivially connected to $\partial X'$.}
Let therefore $x\in X'$. Since $G$ is connected and $X'\subsetneq X$, there exists $y\in X\setminus X'$ and a path $x_0=x,x_1,x_2,\dots,x_n=y$ with $w_{x_ix_{i+1}}>0$ for $i=0,\dots,n-1$. Since $x_0\in X'$ and $x_n\in X\setminus X'$, there exists $0 \leq j\leq n-1$ such that $x_i \in X'$ for $0 \leq i \leq j$ and $x_{j+1}\in X\setminus X'$. It follows that $x_{j+1}\in \partial X'$ and 
\[d_{G'}(x,x_{j+1}) \leq \sum_{i=0}^{j}w_{x_ix_{i+1}}^{-1} < \infty.\]
This completes the proof.
\end{proof}
We now give the proof of \cref{thm:comp_graph_cones}. 
\begin{proof}[Proof of \cref{thm:comp_graph_cones}]
For notational simplicity, let us set $v = d_G(\cdot,z)$. We first show that
\begin{equation}\label{eq:cone_super}
\L^G_\infty v(x) \leq 0 \ \ \text{for all }  x\in X\setminus \{z\}.
\end{equation}
Let $x\in X\setminus \{z\}$. By \cref{lem:eikonal_graph} we have
\[\min_{y\in X}w_{xy}(v(y) - v(x)) = - \max_{y\in X}w_{xy}(v(x)-v(y))= -1.\]
Now let $y_*\in X$ \revision{be} such that
\[w_{xy_*}(v(y_*) - v(x)) =\max_{y\in X}w_{xy}(v(y) - v(x)).\]
Invoking \cref{lem:eikonal_graph} again yields
\[\max_{y\in X}w_{xy}(v(y) - v(x))=w_{xy_*}(v(y_*) - v(x)) \leq \max_{z\in X} w_{zy_*}(v(y_*)-v(z)) = 1,\]
since the weights are symmetric, so that $w_{zy_*} = w_{y_*z}$. This establishes the claim \labelcref{eq:cone_super}.

We now consider the subgraph $G'=(\closure{X'},W')$, as in \cref{prop:connectivity}. Since $\L^{G'}_\infty v(x)=\L^{G}_\infty v(x)$ for all $x\in X'$ we have
\[\L^{G'}_\revision{\infty} (a v)(x) = a \L^{G'}_\revision{\infty} v(x) = a \L^G_\revision{\infty} v(x) \leq 0\]
for all $x\in X'$ and $a\geq 0$. By \cref{prop:connectivity}, $G'$ is connected to $\partial X'$, and so we can invoke the comparison principle, \cref{thm:graph_comp_princ}, to obtain \revision{that for $u$ satisfying $-\L^{G}_\infty(u)\leq 0$ it holds}
\[\max_{\closure{X'}} (u - a\,v) = \max_{\partial X'} (u - a\,v).\]
\revision{
If, conversely, it holds $-\L^{G}_\infty u \geq 0$ we get $-\L^G_\infty(-u)\leq 0$ and hence
\begin{align*}
   \max_{\closure{X'}} (-u - a\,v) = \max_{\partial X'} (-u - a\,v) 
\end{align*}
which is equivalent to
\begin{align*}
   \min_{\closure{X'}} (u + a\,v) = \min_{\partial X'} (u + a\,v).
\end{align*}
This concludes the proof.
}
\end{proof}

\subsection{Relations to Absolute Minimizers}

Since according to \cref{thm:comp_graph_cones}, the solution of \labelcref{eq:graph_inflap} satisfies comparison with graph distance functions, it seems natural that it is also an absolutely minimal Lipschitz extension of $g$ from the constraint set $\constr$ to the entire graph $X$. 
Since, for the sake of simplifying later proofs, we are not working with connected test sets $X'$ in \cref{thm:comp_graph_cones}, we cannot use standard results like \cite[Proposition 4.1]{juutinen2006equivalence} and will prove the statement ourselves.

For a subset $X'\subset X$, and a function $u:X' \to \R$, we define the graph Lipschitz constant
\begin{equation}\label{eq:lip}
\Lip_G(u;X') = \max_{x,y\in X'} \frac{|u(x)-u(y)|}{d_G(x,y)}
\end{equation}
and we abbreviate $\Lip_G(u):=\Lip_G(u;X)$.
Before we can \revision{prove} the absolute minimality we need the following \revision{Lemma}.
\begin{lemma}\label{lem:subsets}
Let $X'\subset X$, $x\in\closure{X'}$, and $X'':=X'\setminus\{x\}$.
Then it holds
\begin{align*}
    \partial X''\subset\partial X'\cup\{x\},
    \qquad
    \closure{X'}\setminus\closure{X''}\subset\partial X'\cup\{x\}.
\end{align*}
\end{lemma}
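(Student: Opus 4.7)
The plan is to unpack both statements directly from the definitions of graph boundary and closure given in \labelcref{eq:graph_boundary} and \labelcref{eq:closure}, making a case distinction based on whether $x\in X'$ or $x\in\partial X'\setminus X'$ (these are the two possibilities encoded in $x\in\closure{X'}$).

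For the first inclusion $\partial X''\subset\partial X'\cup\{x\}$, I would pick $y\in\partial X''$ so that $y\in X\setminus X''$ and $w_{yz}>0$ for some $z\in X''$. If $y=x$, there is nothing to prove, so assume $y\neq x$. In the case $x\in X'$, one has $X\setminus X''=(X\setminus X')\cup\{x\}$, and $y\neq x$ forces $y\in X\setminus X'$; since $z\in X''\subset X'$, this places $y$ in $\partial X'$. In the case $x\in\partial X'\setminus X'$, the set $X''=X'\setminus\{x\}$ equals $X'$ outright, so $\partial X''=\partial X'$ and the inclusion is trivial.

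For the second inclusion $\closure{X'}\setminus\closure{X''}\subset\partial X'\cup\{x\}$, I would take $y\in \closure{X'}\setminus\closure{X''}$ and use $\closure{X'}=X'\cup\partial X'$. If $y\in\partial X'$, we are done. Otherwise $y\in X'$, and since $y\notin\closure{X''}\supset X''=X'\setminus\{x\}$, the only way $y\in X'$ can fail to be in $X''$ is if $y=x$. This completes the inclusion.

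Both parts are essentially bookkeeping, so the only mild obstacle is being careful about the two configurations of $x$ in $\closure{X'}$; in particular, the subtle point in the first inclusion is to note that adding $x$ to $X\setminus X''$ when $x\in X'$ is what prevents the stronger inclusion $\partial X''\subset\partial X'$ from holding in general, and to check that this is the only extra point that can appear. I expect the proof to take only a handful of lines.
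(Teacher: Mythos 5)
Your proof is correct and follows essentially the same direct-from-definitions strategy as the paper. The only cosmetic difference is in the case split for the first inclusion: you case on the location of the removed vertex $x$ (if $x\in X'$, then $X\setminus X''=(X\setminus X')\cup\{x\}$; if $x\in\partial X'\setminus X'$, then $X''=X'$ and the inclusion is vacuous), whereas the paper cases on whether the boundary point $z\in\partial X''$ lies in $X'$ or not; both are equally short and elementary, and the second inclusion is argued identically in both.
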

\begin{proof}
If $X''=\emptyset$ the statements are trivial so we assume that $X''\neq\emptyset$.

For the first statement we distinguish two cases: 
If $z\in \partial X'' \cap X'$, then by definition of the graph boundary \labelcref{eq:graph_boundary} it holds $z\notin X''$ which together with $z\in X'$ implies $z=x$.
If $z\in\partial X'' \setminus X'$, then again by \labelcref{eq:graph_boundary} and the fact that $X''\neq\emptyset$ it follows that there exists $y\in X''\subset X'$ with $w_{yz}>0$ and hence $z\in\partial X'$.
Combining both cases shows the desired statement.

For the second statement we let $z\in\closure{X'}\setminus\closure{X''}$ and argue as follows:
If $z=x$ nothing needs to be shown and therefore we assume $z\neq x$.
We know that in particular it holds $z\in\closure{X'}=X'\cup\partial X'$ so if $z\in\partial X'$ the proof is complete.
The only interesting case is $z\in X'$ in which case we get that
\begin{align*}
    z \in X'\setminus\closure{X''}\subset X'\setminus X'' = X'\setminus(X'\setminus\{x\})=\{x\}.
\end{align*}
This contradicts our assumption and we can conclude.
\end{proof}

\begin{proposition}\label{prop:lipschitz_discr}
Assume $G$ is connected, let $g:\constr\to\R$ be Lipschitz continuous, and let $u:X\to\R$ be a solution of \labelcref{eq:graph_inflap}.
Then for all subsets $X'\subset X\revision{\setminus\constr}$ it holds
\begin{align}
    \Lip_G(u;\closure{X'}) = \Lip_G(u;\partial X').
\end{align}
\end{proposition}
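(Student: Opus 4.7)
The inequality $\Lip_G(u;\closure{X'})\geq\Lip_G(u;\partial X')$ is immediate from $\partial X'\subset\closure{X'}$, so the substance of the statement is the reverse estimate. My plan is to set $a:=\Lip_G(u;\partial X')$ and prove $|u(x)-u(y)|\leq a\,d_G(x,y)$ for every pair in $\closure{X'}$. Since $X'\subset X\setminus\constr$ and $u$ solves \labelcref{eq:graph_inflap}, we have $-\L^G_\infty u = 0$ on $X'$ and therefore on every subset of $X'$, so \cref{thm:comp_graph_cones} is available both on $X'$ itself and on any $X''$ obtained from $X'$ by removing points.

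As a first step I would establish the bound when one of the two arguments lies in $\partial X'$. Fix $z\in\partial X'\subset X\setminus X'$ and apply \cref{thm:comp_graph_cones} to $X'$ with this $z$. For any $w\in\partial X'$, the definition of $a$ gives $u(w)-u(z)\leq a\,d_G(w,z)$, so $u(w)-a\,d_G(w,z)\leq u(z)$ with equality at $w=z$; hence the right-hand side of \labelcref{eq:comp_graph_cones} is exactly $u(z)$. This yields $u(x)\leq u(z)+a\,d_G(x,z)$ for all $x\in\closure{X'}$, and the companion identity \labelcref{eq:comp_graph_cones_below} gives the opposite sign, so
\[
|u(x)-u(z)|\leq a\,d_G(x,z)\quad\text{whenever }x\in\closure{X'},\ z\in\partial X'.
\]

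The more delicate second step handles an interior base point $x\in X'$. Set $X'':=X'\setminus\{x\}$; by \cref{lem:subsets} we have $\partial X''\subset\partial X'\cup\{x\}$ and $\closure{X'}\setminus\closure{X''}\subset\partial X'\cup\{x\}$. Apply \cref{thm:comp_graph_cones} to $X''$ with the choice $z=x\in X\setminus X''$. On $\partial X''$ the function $u(\cdot)-a\,d_G(\cdot,x)$ equals $u(x)$ at the point $x$ itself, and at every $w\in\partial X'\cap\partial X''$ the first step gives $u(w)-a\,d_G(w,x)\leq u(x)$; therefore the right-hand max is bounded by $u(x)$ and \cref{thm:comp_graph_cones} propagates this to all of $\closure{X''}$. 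For $y\in\closure{X'}\setminus\closure{X''}$ the bound is trivial when $y=x$ and follows from the first step when $y\in\partial X'$. The inf-version gives the opposite sign, which combined with the first step yields the desired Lipschitz inequality on the whole of $\closure{X'}$.

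The main obstacle I expect is the bookkeeping in the second step: one has to check that every contributor to $\max_{\partial X''}(u-a\,d_G(\cdot,x))$ is controlled by $u(x)$, and that no point in $\closure{X'}$ is missed when one passes from $X''$ back to $X'$. Both issues are resolved precisely by the two inclusions of \cref{lem:subsets}, which is why that lemma was isolated beforehand. A marginal side issue is the degenerate case $\constr=\emptyset$, $X'=X$, where $\partial X'=\emptyset$ so that \cref{thm:comp_graph_cones} cannot be applied on $X'$ itself; here graph infinity harmonicity on the connected graph $G$ forces $u$ to be constant, so both Lipschitz constants vanish and the identity holds trivially.
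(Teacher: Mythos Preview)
Your proof is correct and follows essentially the same route as the paper: first use \cref{thm:comp_graph_cones} on $X'$ with a boundary base point to obtain the Lipschitz bound between interior and boundary points, then remove a single interior point $x$ to form $X''=X'\setminus\{x\}$, apply \cref{thm:comp_graph_cones} on $X''$ with base point $x$, and close the remaining cases using the two inclusions of \cref{lem:subsets}. The only cosmetic difference is that you invoke both the max- and min-versions of \cref{thm:comp_graph_cones} at each stage to get two-sided bounds, whereas the paper proves only $u(x)-u(y)\leq a\,d_G(x,y)$ for all $x,y\in\closure{X'}$ and lets the symmetry in $x,y$ supply the other sign.
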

\begin{proof}
By definition of $\Lip_G(u;\partial X')$ we have that
\begin{align}
u(x) - u(y) \leq \Lip_G(u;\partial X')\, d_G(x,y),\quad\forall x,y\in \partial X'.
\end{align}
Using \cref{thm:comp_graph_cones} we get
\begin{align*}
\max_{\closure{X'}} \left(u -\Lip_G(u;\partial X')\, d_G(\cdot,y)\right) &= \max_{\partial X'} \left(u -\Lip_G(u;\partial X')\, d_G(\cdot,y)\right) \leq u(y),\quad\forall y\in \partial X',
\end{align*}
and therefore
\begin{align}\label{eq:lip_to_constr}
u(x) - u(y)&\leq \Lip_G(u;\partial X')\, d_G(x,y), \quad \forall x\in \closure{X'},\;\forall y\in \partial X'.
\end{align}
Note that \labelcref{eq:lip_to_constr} implies that
\begin{align*}
    u(x) = \min_{y\in \partial X'\cup \{x\}}\{u(y) + \Lip_G(u;\partial X')\, d_G(x,y)\},\quad\forall x\in\closure{X'}.
\end{align*}
Fixing $x\in\closure{X'}$, setting $X'' = X'\setminus\{x\}$ and using comparison with cones from below from \cref{thm:comp_graph_cones} we have
\begin{align*}
\min_{\closure{X''}}\left(u +\Lip_G(u;\partial X')\, d_G(x,\cdot)\right) &= \min_{\partial X''}\left(u +\Lip_G(u;\partial X')\, d_G(x,\cdot)\right)\\ &\geq 
\min_{\partial X' \cup\{x\}}\left(u +\Lip_G(u;\partial X')\, d_G(x,\cdot)\right)=
u(x),
\end{align*}
since $\partial X''\subset\partial X'\cup\{x\}$ according to \cref{lem:subsets}.
It follows that
\begin{align}\label{eq:lip_constr_inner}
u(x) - u(y) \leq  \Lip_G(u;\partial X')\, d_G(x,y),\quad\forall y\in\closure{X''}.
\end{align}
By \cref{lem:subsets} we know that $\closure{X'}\setminus\closure{X''}
\subset\partial X'\cup\{x\}$. Using \labelcref{eq:lip_to_constr} we then have that \labelcref{eq:lip_constr_inner} also holds for every $y\in\closure{X'}\setminus\closure{X''}$
and therefore
\begin{align*}
u(x) - u(y) \leq  \Lip_G(u;\partial X')\, d_G(x,y) \quad \forall y\in\closure{X'}.
\end{align*}
Since $x\in\closure{X'}$ was arbitrary, this concludes the proof.
\end{proof}


\section{Continuum: Absolutely Minimizing Lipschitz Extensions}
\label{sec:continuum_problem}

\subsection{Equivalence with Comparison with Distance Functions}

In \cref{sec:main_results} we have already introduced the continuum problem \labelcref{eq:AMLE} of finding an Absolutely Minimizing Lipschitz Extension (AMLE) of the label function $g:\constr\to\R$.
In fact, we will not work with AMLEs directly but rely on an intriguing \revision{property} of theirs, called Comparison with Distance Functions (CDF), see \cite{champion2007principles,juutinen2006equivalence}.
\begin{definition}[CDF]\label{def:comparison_distance_functions}
We shall say that a upper semicontinuous function $u\in USC(\closure \domain)$ satisfies CDF from above in $\domain_\constr$, if  for each relatively open and connected subset $V \subset \domain_\constr$, any \revision{$x_0\in \closure\domain\setminus V$} and $a\geq 0$ we have
\[\max_{\closure V}(u - a\,d_\domain(x_0,\cdot)) = \max_{\revision{\relpartial V}}(u - a\,d_\domain(x_0,\cdot)).\]
Similarly, we say  $u\in LSC(\closure\domain)$ satisfies CDF from below in $\domain_\constr$, if for each relatively open and connected subset \revision{$V\subset \domain_\constr$}, any \revision{$x_0\in \closure\domain\setminus V$} and $a\geq 0$ we have
\[\min_{\closure V}(u + a\,d_\domain(x_0,\cdot)) = \min_{\revision{\relpartial V}}(u + a\,d_\domain(x_0,\cdot)).\]
We say $u\in C(\closure\domain)$ satisfies CDF if it satisfies CDF from above and below.
\end{definition}
\revision{
\begin{remark}
Note that this definition is a special case of \cite[Definition 2.3]{juutinen2006equivalence}.
For this, one regards $X:=\closure\domain$ equipped with $d_\domain$ as a length space and $\domain_\constr$ as an open subset of $X$ with respect to the topology of $d_\domain$, as explained in \cref{subsec:continuum_problem}. 
Consequently, open subsets of $\domain_\constr$ with respect to this topology are relatively open subsets.
\end{remark}}

In fact, being an AMLE is equivalent to satisfying CDF, which is why we will only work with this property for the rest of the paper.
\begin{proposition}[{\cite[Proposition 4.1]{juutinen2006equivalence}}]
Let $g:\constr\to\R$ be Lipschitz continuous.
A function $u:C(\closure\domain)\to\R$ with $u=g$ on $\constr$ is an AMLE of $g$ if and only if it satisfies CDF on $\domain_\constr$.
\end{proposition}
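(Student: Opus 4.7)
The strategy is to prove the two implications separately, exploiting the fact that in the length space $(\closure\domain, d_\domain)$ the functions $y\mapsto a\, d_\domain(x_0, y)$ are the $a$-Lipschitz ``cones'' whose comparison properties characterize absolute minimality, in direct analogy to the classical Euclidean setting.

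For the direction AMLE $\Rightarrow$ CDF, I would fix a relatively open connected $V\subset\domain_\constr$, a point $x_0\in\closure\domain\setminus V$, and $a\geq 0$, and set $M:=\max_{\relpartial V}(u - a\, d_\domain(x_0,\cdot))$. Consider the relatively open set $W:=\{y\in V \st u(y) - a\, d_\domain(x_0,y) > M\}$; assuming $W\neq\emptyset$ for contradiction, pick a connected component $V'$. By continuity of $u$ and the definition of $M$, one has $u \leq M + a\, d_\domain(x_0,\cdot)$ on $\relpartial V'$, while the reverse strict inequality holds on $V'$. The competitor $v := \min\{u, M + a\, d_\domain(x_0,\cdot)\}$ agrees with $u$ on $\relpartial V'$, lies strictly below $u$ on $V'$, and is the $a$-Lipschitz cone on a smaller sub-domain $V''\subset V'$ where the minimum is realized by the cone; on such a $V''$ one obtains $\Lip_\domain(v;\closure V'') \leq a < \Lip_\domain(u;\closure V'')$, contradicting the absolute minimality of $u$ on $V''$.

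For the direction CDF $\Rightarrow$ AMLE, fix a relatively open connected $V\subset\domain_\constr$ and set $L:=\Lip_\domain(u;\relpartial V)$; the nontrivial inequality is $\Lip_\domain(u;\closure V)\leq L$. For any $x_0\in\relpartial V$, the $\relpartial V$-values of $u$ satisfy $u(z) - L\, d_\domain(x_0,z) \leq u(x_0)$, so CDF from above with parameter $a=L$ propagates this bound to all of $\closure V$, and CDF from below yields the symmetric inequality, giving $|u(x_0)-u(y)| \leq L\, d_\domain(x_0,y)$ for every $x_0\in\relpartial V$ and $y\in\closure V$. For the remaining interior-to-interior case $x_1, x_2\in V$, apply CDF on the connected component $V''$ of $V\setminus \closure{B_r(x_2)}$ containing $x_1$, for small $r>0$. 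Its relative boundary is contained in $\relpartial V \cup \relpartial B_r(x_2)$, on which the first step together with the continuity of $u$ delivers an $L$-Lipschitz bound up to $o(1)$; the cone comparison then transports this bound to $x_1$, and letting $r\downarrow 0$ gives $|u(x_1)-u(x_2)|\leq L\, d_\domain(x_1,x_2)$.

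The main obstacle is the interior-to-interior step in the second direction: the punctured sub-domain $V\setminus \closure{B_r(x_2)}$ need not be connected in intricate length-space geometries, so one must isolate the connected component containing $x_1$ and verify that its relative boundary still sits in the region where the Lipschitz bound is already known; the limit $r\downarrow 0$ then relies on uniform continuity of $u$ on $\closure\domain$. A secondary delicate point is the first direction, where one must ensure that the sub-domain $V''$ on which the strict Lipschitz improvement is read off actually lies inside $V'\subset\domain_\constr$, so that the absolute minimality hypothesis genuinely applies there.
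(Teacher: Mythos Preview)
The paper does not prove this proposition at all: it is quoted verbatim from \cite{juutinen2006equivalence} and used as a black box. So there is no ``paper's proof'' to compare against; the relevant question is whether your sketch stands on its own.

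Your CDF $\Rightarrow$ AMLE direction is the standard two–step argument (boundary–to–interior via a single cone comparison, then interior–to–interior via the punctured domain and $r\downarrow 0$) and is fine as a sketch.

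The AMLE $\Rightarrow$ CDF direction has a real gap. You assert that on the component $V'$ of $\{u>M+a\,d_\domain(x_0,\cdot)\}$ one has $\Lip_\domain(u;\closure{V'})>a$, but you give no reason for this strict inequality, and it is precisely the heart of the matter. The competitor $v$ is a red herring here: since $u>\phi:=M+a\,d_\domain(x_0,\cdot)$ on all of $V'$, your ``$V''$'' is just $V'$, and knowing that $v=\phi$ is $a$-Lipschitz while $u>v$ in the interior and $u=v$ on $\relpartial V'$ does \emph{not} by itself force $\Lip_\domain(u;\closure{V'})>a$. What is missing is the length-space ingredient: because $x_0\notin V'$, any geodesic from $x_0$ to a point $y_0\in V'$ must hit $\relpartial V'$ at some $z$ with $d_\domain(x_0,y_0)=d_\domain(x_0,z)+d_\domain(z,y_0)$; since $u(z)=\phi(z)$ and $u(y_0)>\phi(y_0)$ this gives $u(y_0)-u(z)>a\,d_\domain(z,y_0)$, which is the desired strict violation. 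Equivalently, if $u$ were $a$-Lipschitz on $\closure{V'}$ one would get $u(y_0)\le \phi(z)+a\,d_\domain(z,y_0)$ for all $z\in\relpartial V'$ and then $\inf_{z\in\relpartial V'}\{d_\domain(x_0,z)+d_\domain(z,y_0)\}=d_\domain(x_0,y_0)$ forces $u(y_0)\le\phi(y_0)$, a contradiction. Either way, you need to invoke that $(\closure\domain,d_\domain)$ is a length space and that $x_0$ lies outside $V'$; without this the argument does not close.
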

Hence, we can equivalently reformulate the continuum problem as to find a function $u:\closure\domain\to\R$ which attains the label values on $\constr$ and satisfies CDF in $\domain_\constr$:
\begin{align}\label{eq:continuum_problem}
    \begin{cases}
    u\text{ satisfies CDF} &\text{on }\domain_\constr,\\
    u = g\quad&\text{on }\constr.
    \end{cases}
\end{align}
For convenience we also introduce the notion of sub- and super solutions to \labelcref{eq:continuum_problem}.
\begin{definition}[Sub- and supersolutions]
We call $u\in USC(\closure\domain)$ a subsolution of \labelcref{eq:continuum_problem} if
\begin{align*}
    \begin{cases}
    u\text{ satisfies CDF from above} &\text{on }\domain_\constr,\\
    u \leq g\quad&\text{on }\constr.
    \end{cases}
\end{align*}
Furthermore, $u\in LSC(\closure\domain)$ is called a supersolution if $-u$ is a subsolution.
Obviously, being both a sub- and a supersolution is equivalent to being a solution of \labelcref{eq:continuum_problem} and hence also to being an AMLE.
\end{definition}

\subsection{Relations to Infinity Laplacian Equations}
\label{sec:cont_rel_inf_lapl}

Under some regularity conditions on the boundary of the domain $\domain$ (now regarded as \revision{a} subset of $\R^d$), \revision{and if $\constr$ is a subset of $\partial\domain$}, one can relate being an AMLE, or equivalently satisfying CDF, with solving an infinity Laplacian equation, where the infinity Laplacian operator is defined as
\begin{align*}
\Delta_\infty u = \langle \nabla u, D^2 u \nabla u\rangle.
\end{align*}

\begin{proposition}
Let $\partial\domain$ denote the boundary of $\domain$, regarded as subset of $\R^d$.
Then the following is true:
\begin{enumerate}
    \item If $\constr=\partial\domain$ then $u\in C(\closure\domain)$ is an AMLE of $g:\partial\domain\to\R$ if and only if it is a viscosity solution of
    \begin{align}
        \begin{cases}
        -\Delta_\infty u &= 0\quad\text{in }\domain,\\
        u &= g\quad\text{on }\partial\domain.
        \end{cases}
    \end{align}
    \item If $\domain$ is \revision{smooth and} convex \revision{and $\constr\subset\partial\domain$} then $u\in C(\closure\domain)$ is an AMLE of $g:\constr\to\R$ if and only if it is a viscosity solution of
    \begin{align}
        \begin{cases}
        -\Delta_\infty u &= 0\quad\text{in }\domain\setminus\constr,\\
        \frac{\partial u}{\partial\nu} &= 0\quad\text{on }\partial\domain\setminus\constr,\\
        u &= g\quad\text{on }\constr,
        \end{cases}
    \end{align}
    where the Neumann boundary conditions are to be understood in the weak viscosity sense, cf. \cite{crandall1992user,esposito2015neumann}.
\end{enumerate}
\end{proposition}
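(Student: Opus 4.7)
The plan is to reduce both parts of the proposition to the Juutinen equivalence (AMLE $\iff$ CDF) quoted just above the statement, and then to translate CDF into the appropriate viscosity formulation of the infinity Laplace equation. By that equivalence it suffices, in each case, to prove that $u\in C(\closure\domain)$ satisfies CDF in $\domain_\constr$ together with the prescribed boundary behavior on $\constr$ if and only if it is a viscosity solution of the respective PDE problem.

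For part 1, since $\constr=\partial\domain$ the set $\domain_\constr$ coincides with $\domain$. Every interior point $x\in\domain$ admits a Euclidean ball $B_r(x)\subset\domain$ on which $d_\domain$ agrees with the Euclidean distance. Applying CDF to relatively open subsets $V\subset B_r(x)$ therefore recovers the classical Euclidean comparison with cones of Crandall--Evans--Gariepy, which is well known to be equivalent to being a viscosity solution of $-\Delta_\infty u=0$ (see, e.g., \cite{champion2007principles,juutinen2006equivalence}). The Dirichlet condition $u=g$ on $\partial\domain$ is built directly into the AMLE requirement via the boundary condition in \labelcref{eq:AMLE}.

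For part 2, convexity of $\domain$ gives $d_\domain(x,y)=|x-y|$ throughout $\closure\domain$, so CDF again reduces to Euclidean cone comparison and interior infinity harmonicity follows exactly as in part 1. The additional ingredient is the Neumann condition on $\partial\domain\setminus\constr$. In the CDF formulation one is now allowed to use test sets $V\subset\domain_\constr$ whose relative closure touches $\partial\domain\setminus\constr$. Choosing cone vertices along the outward normal ray at a free boundary point $x_0$ and arguing in the spirit of \cite{champion2007principles}, one shows that at $x_0$ the envelopes $u\pm a\,|\cdot-x_0|$ cannot be touched from the wrong side by smooth test functions with strictly signed outward normal derivative, which is exactly the viscosity Neumann condition as formulated in \cite{crandall1992user,esposito2015neumann}. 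The reverse implication follows from the viscosity comparison principle for the mixed Dirichlet--Neumann infinity Laplace problem together with the fact that affine cone functions are classical solutions of $-\Delta_\infty=0$ away from their vertex and trivially respect the Neumann condition when the vertex is chosen along the outward normal.

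The main obstacle I anticipate is exactly the passage between CDF and the viscosity Neumann condition in part 2: one has to construct CDF test sets that reach all the way up to the free boundary and carefully verify that comparison against cones with vertex along the outward normal encodes no-flux through $\partial\domain\setminus\constr$ in the correct viscosity sense. Smoothness of $\partial\domain$ and convexity of $\domain$ are used precisely to keep $d_\domain$ Euclidean throughout $\closure\domain$ and to provide a classical outward normal at every boundary point; relaxing either assumption would force one to reinterpret both the cone functions and the notion of the Neumann boundary condition on a non-smooth length space.
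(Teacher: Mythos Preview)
Your treatment of part 1 is essentially the paper's argument unpacked: the paper simply cites the Aronsson--Crandall--Juutinen survey \cite{aronsson2004tour} for the classical equivalence AMLE $\iff$ viscosity solution under Dirichlet data, whereas you route through CDF and local comparison with Euclidean cones. Both are fine.

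For part 2 your plan diverges substantially from the paper's, and the direct route you sketch has a real gap. In the CDF formulation the cone vertex $x_0$ must lie in $\closure\domain\setminus V$; it cannot be placed ``along the outward normal ray'' outside $\closure\domain$. Moreover, Euclidean cones $x\mapsto a|x-x_0|$ with $x_0\in\partial\domain$ do \emph{not} satisfy the homogeneous Neumann condition on $\partial\domain$, so the claim that they ``trivially respect the Neumann condition'' fails, and with it your appeal to a mixed comparison principle to close the reverse implication. The obstacle you flag at the end is therefore not a technicality but the heart of the matter: translating CDF at free boundary points into the viscosity Neumann condition requires genuine PDE work, not a clever cone placement.

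The paper sidesteps this entirely. It quotes \cite{armstrong2011infinity} (their Lemma~3.1) for the hard direction, namely that under smoothness and convexity the viscosity solution of the mixed problem satisfies CDF and is hence an AMLE. The converse is then obtained for free from the uniqueness of the CDF problem (the paper's own \cref{prop:uniqueness}): since an AMLE exists and is unique, and the PDE solution is an AMLE, any AMLE must coincide with it and hence solve the PDE. This uniqueness trick is the idea you are missing; it replaces the delicate boundary analysis you attempt with a one-line argument.
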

\begin{proof}
The first statement is well-known and contained in the seminal paper by Aronsson \cite{aronsson2004tour}\revision{, see Theorem 4.1 therein.}
The second one is a special case of the results in \cite{armstrong2011infinity}.
\revision{In Lemma 3.1 therein} the authors show that, \revision{under the hypothesis that $\domain$ is smooth and convex}, the solution of the PDE admits CDF and hence is an AMLE. 
Furthermore, by the uniqueness of the CDF problem (see \cref{prop:uniqueness} below), \revision{which does, however, not require any convexity}, this is in fact an equivalence.
\end{proof}

\subsection{Max-Ball and Perturbation Statements}
\label{sec:max_ball_perturb}

The following ``max-ball'' and perturbation statements will be essential when we prove discrete-to-continuum convergence rates.
In a simpler setting, using Dirichlet boundary conditions $\constr=\partial\domain$, they were all proved in \cite{smart2010infinity}.
However, for our more general problem \labelcref{eq:continuum_problem} we need to reprove them.

They main point of the max-ball statement is that sub- and supersolutions of \labelcref{eq:continuum_problem} can be turned into sub- and supersolutions of a nonlocal difference equation which is much easier to study. 
In particular, as shown in \cite{armstrong2010easy} the nonlocal equation allows us to derive a maximum principle for \labelcref{eq:continuum_problem} which will let us prove uniqueness and discrete-to-continuum convergence.

The perturbation statement will \revision{allow} us to turn sub- or supersolutions of the nonlocal equation into strict sub- or supersolutions which will turn out useful for deriving convergence rates.

Because of the nonlocal nature of the above statements, one has to work on \revision{an} inner parallel set of $\closure\domain$.
However, 
it suffices to maintain positive distance to the constraint set $\constr$ only.
In what follows we denote by
\begin{align}
    B_\domain(x,\eps) := \{y\in\closure\domain\st d_\domain(x,y)<\eps\}
\end{align}
the open geodesic ball around $x$ with radius $\eps>0$ and its closure is denoted by $\closure B_\domain(x,\eps)$.
Using this, we can define the set
\begin{align}\label{eq:inner_parallel_set}
    \domain^{\nlscale}_\constr:=\{x\in\closure{\domain}\st \dist_\domain(x,\constr)>\nlscale\}.
\end{align}
Note that this ``inner parallel set'' deliberately includes those parts of the topological boundary $\partial\domain$ which are sufficiently far from the constraint set $\constr$.

For a function $u:\closure\domain\to\R$ and $x\in\closure\domain$ we also define
\begin{alignat}{2}
    \label{eq:T-operators}
    T^\nlscale u(x) &:= \sup_{\closure B_\domain(x,\nlscale)} u, 
    \qquad
    T_\nlscale u(x) &&:= \inf_{\closure B_\domain(x,\nlscale)} u,
    \\
    \label{eq:slopes}
    S_\nlscale^+ u &:= \frac{1}{\nlscale}(T^\nlscale u - u),
    \qquad
    S_\nlscale^- u &&:= \frac{1}{\nlscale}(u-T_\nlscale u),
    \\
    \label{eq:nonlocal_lapl}
    \Delta_\infty^\nlscale u &:= \frac{1}{\nlscale}\left(S^+_\nlscale u - S^-_\nlscale u\right). &&
\end{alignat}
We refer to the last object as nonlocal infinity Laplacian operator.
\revision{Interestingly, harmonic functions with respect to this operator coincide with AMLEs with respect to a discrete ``step distance'' and we refer the interested reader to \cite{mazon2012best} for more details.}
The following lemma, the proof of which is adapted from \cite{armstrong2010easy}, is the desired max-ball statement.
It states that functions satisfying CDF can be turned into a sub- and supersolution of the nonlocal infinity Laplacian equation.
\begin{lemma}[Max-Ball]\label{lem:nonloc2loc}
Let $u\in USC(\closure\domain)$ and $v\in LSC(\closure\domain)$ satisfy CDF on $\domain_\constr$ from above and from below, respectively.
Then for all $\nlscale > 0$ it holds
\begin{align}
    -\Delta_\infty^\nlscale T^\nlscale u(x_0) 
    \leq 
    0
    \leq
    -\Delta_\infty^\nlscale T_\nlscale v(x_0),\quad \forall x_0\in \domain^{2\nlscale}_\constr.
\end{align}
\end{lemma}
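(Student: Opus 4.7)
The second inequality follows from the first applied to $-v$: if $v\in LSC(\closure\domain)$ satisfies CDF from below, then $-v\in USC(\closure\domain)$ satisfies CDF from above, and since $T_\nlscale v=-T^\nlscale(-v)$ one checks $\Delta^\nlscale_\infty T_\nlscale v=-\Delta^\nlscale_\infty T^\nlscale(-v)$. Hence I focus on proving $-\Delta^\nlscale_\infty T^\nlscale u(x_0)\leq 0$ for $x_0\in\domain^{2\nlscale}_\constr$. Setting $w:=T^\nlscale u$ and unrolling the definitions, this is equivalent to $T^\nlscale w(x_0)+T_\nlscale w(x_0)\geq 2w(x_0)$.

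The first step is the swap identity $T^\nlscale w(x_0)=T^{2\nlscale}u(x_0)$, which I would verify by a geodesic midpoint argument: since $(\closure\domain,d_\domain)$ is a compact, hence proper, length space, any $z$ with $d_\domain(x_0,z)\leq 2\nlscale$ admits a shortest geodesic from $x_0$ through a midpoint $y$ with $d_\domain(x_0,y),d_\domain(y,z)\leq\nlscale$, so $u(z)\leq w(y)\leq T^\nlscale w(x_0)$; the reverse inequality is just the triangle inequality. Upper semicontinuity of $u$ and of $w$ together with compactness of closed geodesic balls produce $y_0\in\closure B_\domain(x_0,\nlscale)$ with $u(y_0)=w(x_0)$ and $x_1\in\closure B_\domain(x_0,\nlscale)$ with $w(x_1)=T_\nlscale w(x_0)$. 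Setting $\alpha:=w(x_0)-w(x_1)\geq 0$, the case $\alpha=0$ is immediate; otherwise $u(y_0)>\sup_{\closure B_\domain(x_1,\nlscale)}u$ forces $d_\domain(x_1,y_0)>\nlscale$, and the target reduces to $T^{2\nlscale}u(x_0)\geq u(y_0)+\alpha$.

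The heart of the proof is a non-asymptotic mean-value inequality for subsolutions: whenever $y\in\closure\domain$ and $\sigma>0$ satisfy $\closure B_\domain(y,\sigma)\subset\domain_\constr$,
\begin{equation*}
\sup_{B_\domain(y,\sigma)}u+\inf_{B_\domain(y,\sigma)}u\geq 2u(y).
\end{equation*}
I would derive this from CDF from above applied to $u$ on the relatively open, connected ball $V=B_\domain(y,\sigma)\subset\domain_\constr$: choose $p\in\partial V$ approximately realizing $\inf_{\partial V}u$ and set the slope $a$ so that $u(y)-a\sigma=u(p)$; then the CDF identity $\max_{\closure V}(u-ad_\domain(p,\cdot))=\max_{\partial V}(u-ad_\domain(p,\cdot))$ at the interior point $y$, combined with the observation that no near-side point can strictly exceed the matched cone value, forces a far-side boundary maximizer $z^*\in\partial V$ with $d_\domain(p,z^*)\approx 2\sigma$ and $u(z^*)\geq 2u(y)-u(p)$. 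Passing $p$ to the infimizer gives the inequality.

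Applying this mean-value inequality at $y=y_0$ with $\sigma=\nlscale$---legitimate since $\dist_\domain(y_0,\constr)\geq 2\nlscale-d_\domain(x_0,y_0)\geq\nlscale$---the supremum is bounded above by $T^{2\nlscale}u(x_0)$ because $B_\domain(y_0,\nlscale)\subset\closure B_\domain(x_0,2\nlscale)$; for the infimum, I use that $B_\domain(y_0,\nlscale)\cap\closure B_\domain(x_1,\nlscale)$ is nonempty since the centers have geodesic distance $\leq 2\nlscale$ and a point on a connecting geodesic at distance $\nlscale$ from $x_1$ lies in both balls, so on this intersection $u\leq w(x_1)=u(y_0)-\alpha$. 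Combining yields $T^{2\nlscale}u(x_0)+u(y_0)-\alpha\geq 2u(y_0)$, i.e., $T^{2\nlscale}u(x_0)\geq u(y_0)+\alpha$, closing the argument. The main obstacle is establishing the mean-value inequality from CDF from above alone: CDF only guarantees the existence of \emph{some} boundary maximizer of $u-ad_\domain(p,\cdot)$, so the delicate work is to exploit the infimal choice of $p$ together with the matched cone value at $y$ to rule out near-side maximizers and extract the far-side conclusion in the generality of a compact length space.
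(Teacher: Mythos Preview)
Your route and the paper's diverge substantially, and the gap you flag yourself is real. You reduce everything to a \emph{mean-value inequality} for $u$,
\[
\sup_{\closure B_\domain(y,\sigma)}u+\inf_{\closure B_\domain(y,\sigma)}u\;\geq\;2u(y),
\]
applied at the maximizer $y_0$. Your sketch for it --- choose $p\in\relpartial B_\domain(y,\sigma)$ near the infimum, set the slope so that the cone matches $u$ at $y$, and argue that the boundary maximizer of $u-a\,d_\domain(p,\cdot)$ must sit on the far side with $d_\domain(p,z^*)\approx 2\sigma$ --- does not close. CDF from above only gives $\max_{\closure V}(u-a\,d_\domain(p,\cdot))=\max_{\relpartial V}(u-a\,d_\domain(p,\cdot))$; it does not locate the maximizer, and the infimal choice of $p$ by itself does not rule out a near-side maximizer with $d_\domain(p,z^*)<\sigma$. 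Your parenthetical ``no near-side point can strictly exceed the matched cone value'' is precisely the missing step, and in a general length space there is no obvious antipodal structure to exploit.

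The paper sidesteps the whole issue. The key simplification you miss is the trivial bound
\[
(T_\nlscale T^\nlscale u)(x_0)\;\geq\;u(x_0),
\]
which holds because $x_0\in\closure B_\domain(y,\nlscale)$ for every $y\in\closure B_\domain(x_0,\nlscale)$. Together with $T^\nlscale T^\nlscale u(x_0)=u(z_0):=T^{2\nlscale}u(x_0)$, this reduces the claim to $2u(y_0)-u(z_0)-u(x_0)\leq 0$. That inequality comes from a \emph{single} application of CDF from above on $V=B_\domain(x_0,2\nlscale)\setminus\{x_0\}$ with cone vertex $x_0$ and slope $a=\frac{u(z_0)-u(x_0)}{2\nlscale}$: the cone dominates $u$ on $\relpartial V$ by construction (equality at $x_0$; on the sphere of radius $2\nlscale$ because $z_0$ realizes the supremum), hence on all of $\closure V$, and evaluating at $y_0$ with $d_\domain(x_0,y_0)\leq\nlscale$ gives $u(y_0)\leq u(x_0)+\tfrac{1}{2}(u(z_0)-u(x_0))$. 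No mean-value inequality for $u$, no intersection-of-balls geometry, and no far-side localization is needed.
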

\begin{proof}
We just show the inequality for the subsolution $u$, the other case works analogously.
Since $u$ is upper semicontinous we can select points $y_0\in\closure B_\domain(x_0,\nlscale)$ and $z_0\in \closure B_\domain(x_0,2\nlscale)$ such that
\begin{align*}
    u(y_0) = T^\nlscale u(x_0), \quad
    u(z_0) = T^{2\nlscale} u(x_0).
\end{align*}
Using the definition of the nonlocal infinity Laplacian \labelcref{eq:nonlocal_lapl} one computes
\begin{align}
    \label{ineq:max_ball_pf_1}
    -\nlscale^2\Delta_\infty^\nlscale T^\nlscale u(x_0) 
    =2T^\nlscale u(x_0)
    - (T^\nlscale T^\nlscale u)(x_0)
    - (T_\nlscale T^\nlscale u)(x_0) 
    \leq 2 u(y_0) - u(z_0) - u(x_0).
\end{align}
This is true since
$x_0\in \closure B_\domain(y,\nlscale)$ for all 
$y\in \closure B_\domain(x_0,\nlscale)$ and therefore,
\begin{align*}
T^\nlscale u(y)=\sup_{B_\domain(y,\nlscale)} u \geq u(x_0)
\implies
(T_\nlscale T^\nlscale u)(x_0)=
\inf_{y\in B_\domain(x_0,\nlscale)} T^\nlscale(y) 
\geq u(x_0).
\end{align*}
Now one observes that the following inequality is true
\begin{align*}
    u(w) \leq u(x_0) + \frac{u(z_0)-u(x_0)}{2\nlscale}d_\domain(w,x_0),\quad\forall w\in
    \relpartial(B_\domain(x_0,2\nlscale)\setminus\{x_0\}).
\end{align*}
For $w=x_0$ the inequality is obviously correct.
If $d_\domain(w,x_0)=2\nlscale$, then the definition of $z_0$ shows it.

Since $u$ satisfies CDF from above on $\domain_\constr$ and using that $\dist_\domain(x_0,\constr)>2\nlscale$, we obtain
\begin{align*}
    u(w) \leq u(x_0) + \frac{u(z_0)-u(x_0)}{2\nlscale}d_\domain(w,x_0) ,\quad\forall w\in\closure B_\domain(x_0,2\nlscale).
\end{align*}
Choosing $w=y_0$ we get
\begin{align*}
    u(y_0) \leq u(x_0) + \frac{u(z_0)-u(x_0)}{2\nlscale}d_\domain(y_0,x_0)
    \leq 
    u(x_0) + \frac{u(z_0)-u(x_0)}{2}.
\end{align*}
Using this inequality together with \labelcref{ineq:max_ball_pf_1} yields the assertion.
\end{proof}
The next results are contained in \cite{smart2010infinity} which treats a more general graph Laplacian for a graph $G=(X,E,Y)$ consisting of a vertex set $X$, an edge set $E$, and a ``boundary set'' $Y$.
This graph Laplacian is defined as
\begin{align}
    \Delta_\infty^G u(x) = \sup_{\{x,y\}\in E}(u(y)-u(x)) - 
    \sup_{\{x,y\}\in E}(u(x)-u(y)) 
\end{align}
and for the choice 
\begin{align*}
    X &:= \domain_\constr^\nlscale,
    \\
    Y &:= \{x\in X\st \dist_\domain(x_0,\constr)\leq 2\nlscale\},
    \\
    E &:= \{\{x,y\}\subset X\st x\in X\setminus Y,\;0<d_\domain(x,y)\leq\nlscale\},
\end{align*}
it holds
\begin{align}
    \Delta_\infty^G u(x) = \nlscale^2\Delta_\infty^\nlscale u(x),
\end{align}
where $\Delta_\infty^\nlscale$ is defined in \labelcref{eq:nonlocal_lapl}.
Hence, we can use the results from \cite{smart2010infinity} to obtain statements about the nonlocal Laplacian $\Delta_\infty^\nlscale$.

The first result is a maximum principle for the nonlocal infinity Laplacian.
\begin{lemma}\label{lem:nonlocal_max_princ}
Assume that for a constant $C\geq 0$ the functions $u,v:\domain^{\nlscale}_\constr\to\R$ satisfy
\begin{align}
-\Delta_\infty^\nlscale u \leq \revision{C} \leq -\Delta_\infty^\nlscale v
\end{align}
in $\domain^{2\nlscale}_\constr$.
Then it holds
\begin{align}
    \sup_{\domain^\nlscale_\constr}(u-v) = \sup_{\domain^\nlscale_\constr\setminus\domain^{2\nlscale}_\constr}(u-v).
\end{align}
\end{lemma}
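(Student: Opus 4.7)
The plan is to reduce the claim to a comparison principle for the graph infinity Laplacian, exploiting the identification set up immediately before the lemma. Let $X := \domain_\constr^\nlscale$, $Y := \domain_\constr^\nlscale \setminus \domain_\constr^{2\nlscale}$, and equip $X$ with the edges $E = \{\{x,y\}\subset X : x\in X\setminus Y,\ 0<d_\domain(x,y)\leq\nlscale\}$, so that $\Delta_\infty^G u(x) = \nlscale^2\Delta_\infty^\nlscale u(x)$ on $X\setminus Y$. Then the hypotheses rewrite as $-\Delta_\infty^G u \leq \nlscale^2 C \leq -\Delta_\infty^G v$ on $X\setminus Y$, and the conclusion becomes $\sup_X(u-v)=\sup_Y(u-v)$, exactly the form treated in \cite{smart2010infinity}.

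Before invoking that result, I would first verify that $G$ is connected to $Y$ in the sense of \cref{def:connectivity}. Given $x\in X\setminus Y$, one has $2\nlscale<\dist_\domain(x,\constr)<\infty$, so there exists a curve in $\closure\domain$ of length close to $\dist_\domain(x,\constr)$ joining $x$ to $\constr$. Discretising this curve with step size $\nlscale$ produces a finite sequence of vertices in $X$ whose consecutive geodesic distances are at most $\nlscale$; the first vertex whose distance to $\constr$ drops to the interval $(\nlscale,2\nlscale]$ is an element of $Y$ and is therefore graph-reachable from $x$.

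With this in hand the proof follows the standard argument for a graph $\infty$-Laplacian comparison principle, adapted to accommodate the constant $C$. Assume for contradiction that $M:=\sup_X(u-v)$ exceeds $m:=\sup_Y(u-v)$ and, after a routine reduction to almost-maximizers, let $A:=\{x\in X:(u-v)(x)=M\}\subset X\setminus Y$ be non-empty. At any $x_0\in A$ we have $u(y)-u(x_0)\leq v(y)-v(x_0)$ for every $y$, which sandwiches
\[
\min_{y}(u(y)-u(x_0)) + \max_{y}(u(y)-u(x_0)) \;\geq\; -C\nlscale^2 \;\geq\; \min_{y}(v(y)-v(x_0)) + \max_{y}(v(y)-v(x_0))
\]
between two other chains of $\leq$, forcing equality both in the minima and in the maxima. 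This equality forces any graph neighbour $y^\pm$ realising the extrema for $v$ at $x_0$ to itself lie in $A$. Iterating this propagation along a graph path toward $Y$ (guaranteed to exist by the connectivity step above) yields a vertex in $A\cap Y$, contradicting $M>m$.

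The main obstacle is the propagation step: one has to ensure that the sequence of $A$-vertices produced by picking extremising neighbours can be steered into $Y$ in finitely many steps rather than cycling. The cleanest remedy is to choose, at each iteration, the extremising neighbour that also decreases graph distance to $Y$, which is possible because the extrema are attained over the whole ball $\closure B_\domain(x_0,\nlscale)$ in $X$ and that ball always contains a point on the shortest path to $Y$ constructed in the connectivity argument. The presence of the constant $C$ is harmless: it appears identically on both sides and so it drops out of the equality that drives the propagation.
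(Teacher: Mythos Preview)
The paper does not reprove this lemma; it simply cites \cite[Theorem~2.6.5]{smart2010infinity}. Your proposal is an attempt to supply the content of that citation, and the framework you set up (passing to the abstract graph $G=(X,E,Y)$ and checking connectivity to $Y$) is exactly right. However, the propagation step contains a genuine gap.

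Your ``cleanest remedy'' does not work: you cannot \emph{choose} the extremising neighbour to decrease the distance to $Y$. The point $y^+$ realising $\max_{y}(u(y)-u(x_0))$ is determined by the function $u$, not by you; the fact that $\closure B_\domain(x_0,\nlscale)$ also contains some point on a geodesic toward $Y$ is irrelevant unless that point happens to be the $u$-maximiser. (A small imprecision earlier: equality of the min- and max-terms forces the $u$-maximiser and the $v$-minimiser into $A$, not both extrema for $v$.) So iterating ``pick $y^+$'' may wander or cycle inside $X\setminus Y$ indefinitely, and nothing you have written prevents this.

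A secondary issue is that $X=\domain_\constr^\nlscale$ is a continuum, so neither $\sup_X(u-v)$ nor the ball extrema need be attained under the bare hypotheses stated. Your parenthetical ``routine reduction to almost-maximizers'' does not survive the rest of the argument, which relies on exact equality at $x_0$ to conclude $y^\pm\in A$. The actual proof in \cite{smart2010infinity} avoids both problems: it does not try to steer a path to $Y$, but instead studies how the slope $S^+_\nlscale u$ behaves along successive maximisers and derives a contradiction from its monotonicity, handling the approximate-sup issue carefully. If you want a self-contained argument, that is the mechanism you should reproduce.
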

\begin{proof}
\revision{See} \cite[Theorem 2.6.5]{smart2010infinity}.
\end{proof}
The following two lemmas are perturbation results.
The first one allows us to turn a supersolution of the nonlocal equation into one with strictly positive gradient.
The second one shows how to turn a supersolution into a strict supersolution.

\begin{lemma}\label{lem:perturb_pos_grad}
If $u:\domain_\constr^{\nlscale}\to\R$ is bounded from below and satisfies $-\Delta_\infty^\nlscale u \geq 0$ in $\domain^{2\nlscale}_{\constr}$, then for any $\delta>0$ there is a function $v:\domain_\constr^\nlscale\to\R$ that satisfies
\begin{align*}
    -\Delta_\infty^\nlscale v\geq 0,\quad S_\nlscale^- v \geq {\delta},\quad u \leq v \leq u + 2 \delta \dist_\domain(\cdot,\domain_\constr^\nlscale\setminus\domain_\constr^{2\nlscale})\quad\text{on }\domain_\constr^{2\nlscale}.
\end{align*}
\end{lemma}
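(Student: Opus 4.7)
My plan is to construct $v$ via a monotone value-iteration scheme modeled on the analogous graph construction in \cite[Section~2.6]{smart2010infinity}, exploiting the identification of $\Delta_\infty^\nlscale$ with the graph $\infty$-Laplacian on $G=(X,E,Y)$ from \cref{sec:max_ball_perturb}. Setting $Y:=\domain_\constr^\nlscale\setminus\domain_\constr^{2\nlscale}$, I define $v_0:=u$ and inductively
\[
v_{k+1}(x) \;:=\; \begin{cases} \max\bigl(v_k(x),\, T_\nlscale v_k(x) + \delta\nlscale\bigr), & x\in\domain_\constr^{2\nlscale},\\ u(x), & x\in Y.\end{cases}
\]
By construction $(v_k)_{k\geq 0}$ is pointwise non-decreasing. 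Assuming the key inductive bound $v_k\leq u+2\delta\dist_\domain(\cdot,Y)$ on $\domain_\constr^{2\nlscale}$ (the main step, see below), the sequence is bounded above, and since $u$ is bounded from below, it is finite. Its pointwise limit $v$ solves the fixed-point equation
\[
v(x) \;=\; \max\bigl(u(x),\, T_\nlscale v(x)+\delta\nlscale\bigr) \quad\text{on } \domain_\constr^{2\nlscale}, \qquad v=u \text{ on } Y.
\]

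From this fixed point, the three conclusions follow quickly. The inequality $v\geq u$ is immediate. The slope bound $S^-_\nlscale v\geq\delta$ is read off directly: the fixed-point equation yields $v(x)\geq T_\nlscale v(x)+\delta\nlscale$ on $\domain_\constr^{2\nlscale}$. For the supersolution property $-\Delta_\infty^\nlscale v\geq 0$, i.e.\ $2v(x)\geq T^\nlscale v(x)+T_\nlscale v(x)$, I split into two cases depending on which term realizes the maximum in the fixed-point relation. If $v(x)=u(x)$, then $v\geq u$ combined with the supersolution property of $u$ (Lemma~\ref{lem:nonloc2loc}) and the monotonicity $T^\nlscale v\geq T^\nlscale u$, $T_\nlscale v\geq T_\nlscale u$ gives the desired inequality. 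If $v(x)=T_\nlscale v(x)+\delta\nlscale$, then it suffices to check $2T_\nlscale v(x)+2\delta\nlscale\geq T^\nlscale v(x)+T_\nlscale v(x)$, equivalently $T_\nlscale v(x)+2\delta\nlscale\geq T^\nlscale v(x)$; this is a consequence of the analogous inequality applied to neighbors of $x$ inside the ball (using again the fixed-point equation one step deeper).

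The main obstacle is the inductive bound $v_k\leq u+2\delta\dist_\domain(\cdot,Y)$, which controls the iteration. The base case is trivial; for the inductive step one has to show $T_\nlscale v_k(x)+\delta\nlscale\leq u(x)+2\delta\dist_\domain(x,Y)$. This is the step where the factor $2\delta$ (rather than $\delta$) in the obstacle is crucial: the geodesic distance $\dist_\domain(\cdot,Y)$ decreases by exactly $\nlscale$ along a $d_\domain$-geodesic step toward $Y$, so there exists $y\in\closure B_\domain(x,\nlscale)$ with $\dist_\domain(y,Y)\leq \dist_\domain(x,Y)-\nlscale$. Combining this with the inductive hypothesis for $v_k$ at $y$ and the supersolution inequality for $u$ (which bounds how much $u(y)$ can exceed $u(x)$ through the mean-value-type relation $2u\geq T^\nlscale u+T_\nlscale u$), one secures a ``slope budget'' of $2\delta\nlscale$, of which the iteration consumes only $\delta\nlscale$. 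Carrying this accounting out is the delicate point of the proof and is where the argument genuinely uses the identification with the graph setting of \cite{smart2010infinity}, where the geodesic/distance-function calculations become finite-dimensional.
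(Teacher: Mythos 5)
The paper's own ``proof'' of this lemma is the one-line citation to \cite[Lemma~2.6.3]{smart2010infinity}, so there is no internal argument to compare against; but your proposed value iteration has a concrete gap. The problem is the boundary value $v_k = u$ on $Y := \domain_\constr^\nlscale\setminus\domain_\constr^{2\nlscale}$. Take $u$ constant, which is a supersolution and bounded below. Then $T_\nlscale v_0 \equiv u$, so after one step $v_1 \equiv u + \delta\nlscale$ on all of $\domain_\constr^{2\nlscale}$. For any $x\in\domain_\constr^{2\nlscale}$ with $\dist_\domain(x,Y) < \nlscale/2$ (such points exist, since $\dist_\domain(\cdot,Y) = \dist_\domain(\cdot,\constr)-2\nlscale$ vanishes on the relative boundary of $\domain_\constr^{2\nlscale}$) the obstacle is $u + 2\delta\dist_\domain(x,Y) < u + \delta\nlscale = v_1(x)$. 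So the inductive bound $v_k\leq u+2\delta\dist_\domain(\cdot,Y)$ fails already at $k=1$, and the limit $v$ of your iteration violates the stated upper bound. Because the three conclusions are required only on $\domain_\constr^{2\nlscale}$, the correct construction must take $v$ \emph{strictly below} $u$ on $Y$ (for instance $u - 2\delta\nlscale$); indeed, for $x$ near $\relpartial\domain_\constr^{2\nlscale}$ the constraints $v(x)\approx u(x)$, $v\geq u$ on $\domain_\constr^{2\nlscale}$, and $S^-_\nlscale v(x)\geq\delta$ can only be reconciled if the infimum $T_\nlscale v(x)$ is realized by a point of $Y$ where $v$ lies well below $u$.

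The ``slope budget'' step is also not justified as written. The supersolution relation $2u \geq T^\nlscale u + T_\nlscale u$ gives $T^\nlscale u(x) - u(x) \leq u(x) - T_\nlscale u(x) = \nlscale\, S_\nlscale^- u(x)$, so $u(y^*) - u(x)$ for $y^*$ on the geodesic toward $Y$ is bounded by $\nlscale\, S_\nlscale^- u(x)$, not by $\delta\nlscale$ as your accounting requires; $S_\nlscale^- u$ is not a priori bounded by $\delta$. A correct version of the argument must split cases on whether $S_\nlscale^- u(x) \leq \delta$ (where the iteration is essentially stationary at $x$ and your estimate goes through) or $S_\nlscale^- u(x) > \delta$ (where a separate argument is needed); your sketch does not perform this split, and with the boundary value $v_k=u$ on $Y$ no amount of accounting will save the claim, as the constant example shows. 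Also, when $\dist_\domain(x,Y)<\nlscale$ the point $y^*$ you choose lands in $Y$, and the estimate $\dist_\domain(y^*,Y)\leq\dist_\domain(x,Y)-\nlscale$ (which would be negative) is not available.
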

\begin{proof}
\revision{See} \cite[Lemma 2.6.3]{smart2010infinity}.
\end{proof}
\begin{lemma}\label{lem:perturb_strict}
Suppose $v:\domain_\constr^\nlscale\to\R$ is bounded and $-\Delta_\infty^\nlscale v \geq 0$ on $\domain_\constr^{2\nlscale}$.
Then there exists $\delta_0>0$ such that for any $0\leq\delta\leq\delta_0$ the function $w:=v-\delta v^2$ satisfies
\begin{align*}
    -\Delta_\infty^\nlscale w \geq -\Delta_\infty^\nlscale v + \delta(S^-_\nlscale v)^2\quad\text{on }\domain_\constr^{2\nlscale}.
\end{align*}
\end{lemma}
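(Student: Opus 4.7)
My approach is to derive an exact pointwise identity for $-\Delta_\infty^\nlscale w$ by a direct computation and then read off the inequality. First, I would exploit the fact that the operators $\Delta_\infty^\nlscale$, $S^+_\nlscale$, and $S^-_\nlscale$ depend on $v$ only through the differences $T^\nlscale v - v$ and $v - T_\nlscale v$, so they are invariant under adding a constant to $v$. Thus, after replacing $v$ by $v - \sup_{\domain_\constr^\nlscale} v$, I may assume without loss of generality that $v \leq 0$ on $\domain_\constr^\nlscale$; both the hypothesis $-\Delta_\infty^\nlscale v \geq 0$ and the target right-hand side $-\Delta_\infty^\nlscale v + \delta(S^-_\nlscale v)^2$ are unaffected by this shift.

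With this normalization in place, let $M := \sup_{\domain_\constr^\nlscale} |v|$ and choose $\delta_0 := 1/(2M)$. For $0 \leq \delta \leq \delta_0$, the quadratic $f(t) := t - \delta t^2$ satisfies $f'(t) = 1 - 2\delta t \geq 1$ on $[-M,0]$, so $f$ is non-decreasing on the range of $v$. Monotonicity lets us pull $f$ through the sup and inf over $\closure B_\domain(x,\nlscale)$, giving $T^\nlscale w = f(T^\nlscale v)$ and $T_\nlscale w = f(T_\nlscale v)$. Using the exact Taylor identity $f(v+s) = f(v) + (1-2\delta v)s - \delta s^2$ at $s = T^\nlscale v - v$ and $s = T_\nlscale v - v$ and substituting into $-\nlscale^2 \Delta_\infty^\nlscale w = 2w - T^\nlscale w - T_\nlscale w$, the algebra collapses to the pointwise identity
\begin{align*}
-\Delta_\infty^\nlscale w = (1 - 2\delta v)(-\Delta_\infty^\nlscale v) + \delta (S^+_\nlscale v)^2 + \delta (S^-_\nlscale v)^2.
\end{align*}

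The conclusion is then immediate: with $v \leq 0$ and $-\Delta_\infty^\nlscale v \geq 0$ we have $1 - 2\delta v \geq 1$, so $(1 - 2\delta v)(-\Delta_\infty^\nlscale v) \geq -\Delta_\infty^\nlscale v$, and dropping the non-negative term $\delta(S^+_\nlscale v)^2$ yields the claimed inequality. The main obstacle is the sign condition in this last step: without normalizing $v$ to be non-positive, the factor $1 - 2\delta v$ may drop below $1$ (or become negative) and the bound breaks. The resolution is the shift reduction in the first step, which is available precisely because the nonlocal operators depend only on differences; a minor but convenient secondary point is that the quadratic nature of $f$ makes the Taylor expansion exact, so no error terms appear in the identity.
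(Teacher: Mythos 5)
Your computation of the pointwise identity
\[
-\Delta_\infty^\nlscale w = (1-2\delta v)\bigl(-\Delta_\infty^\nlscale v\bigr) + \delta\bigl(S^+_\nlscale v\bigr)^2 + \delta\bigl(S^-_\nlscale v\bigr)^2
\]
(valid once $f(t)=t-\delta t^2$ is monotone on the range of $v$) is correct and does isolate the crux of the argument; you are also right that the desired inequality follows immediately from it when $v\le 0$. For comparison, the paper's own proof is a very short reduction to \cite[Lemma~2.6.4]{smart2010infinity}: it records that $t\mapsto t+\delta t^2$ is monotone on the range of $v$ whenever $v\ge -c$ and $0\le \delta\le 1/(2c)$, and then hands off the remaining computation to Smart's thesis.

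The genuine gap is the WLOG reduction to $v\le 0$. You correctly observe that shifting $v\mapsto \tilde v := v-\sup v$ preserves the hypothesis $-\Delta_\infty^\nlscale v\ge 0$ and the target right-hand side $-\Delta_\infty^\nlscale v + \delta(S^-_\nlscale v)^2$, since these depend on $v$ only through differences. But it does \emph{not} preserve the left-hand side, because $w := v-\delta v^2$ is not equivariant under translation of $v$: for $\tilde v = v-c$ one has
\[
\tilde v - \delta\tilde v^2 = (v-\delta v^2) + 2\delta c\,v + \text{const},
\]
so the shift changes $w$ by the nonconstant term $2\delta c\,v$, and hence changes $-\Delta_\infty^\nlscale w$ (the nonlocal operator is neither linear nor additive over such a decomposition). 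You have therefore proved the inequality for the auxiliary function $\tilde w = \tilde v-\delta\tilde v^2$, which is not the $w$ in the statement of the lemma. Without the shift, your identity delivers the claim only at points where $v\le 0$; where $v>0$ and $-\Delta_\infty^\nlscale v>0$, the term $-2\delta v\,(-\Delta_\infty^\nlscale v)$ is strictly negative and is not in general dominated by $\delta(S^+_\nlscale v)^2$ (a quick check with $v(x_0)=5$, $T^\nlscale v(x_0)=6$, $T_\nlscale v(x_0)=0$ shows the inequality failing for every $\delta>0$). To close the argument one needs either an a priori sign/range control on $v$ beyond boundedness, or to go through Smart's lemma as the paper does, where the perturbation and normalization are set up so that this issue does not arise.
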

\begin{proof}
If $v\geq 0$ the map $t\mapsto t+\delta t^2$ is monotone on the range of $v$ for all $\delta\geq 0$.
If $v\geq -c$ for some constant $c>0$ then it is monotone for all $0\leq \delta\leq \frac{1}{2c}$.
From there on the proof works verbatim as in \cite[Lemma 2.6.4]{smart2010infinity}.
\end{proof}
\begin{remark}
Obviously, these two lemmata have analogous versions for subharmonic functions, which are obtained by replacing $u$ with $-u$, $S_\nlscale^-$ with $S_\nlscale^+$, etc.
\end{remark}

\subsection{Existence and Uniqueness}

Existence for AMLEs \labelcref{eq:AMLE} or equivalently of solutions for the CDF problem \labelcref{eq:continuum_problem} is well understood. Indeed, since $(\closure\domain,d_\domain)$ is a length space, existence of solutions can be obtained using Perron's method, see \cite{juutinen2006equivalence,juutinen2002absolutely,milman1999absolutely} for details.

Using \cref{lem:nonloc2loc,lem:nonlocal_max_princ} and the same strategy as in \cite{armstrong2010easy} we obtain uniqueness of solutions to the continuum problem \labelcref{eq:continuum_problem} which we state in the following proposition.

\begin{proposition}[Uniqueness]\label{prop:uniqueness}
There exists at most one solution of \labelcref{eq:continuum_problem}.
\end{proposition}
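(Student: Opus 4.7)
The plan is to adapt the elementary strategy of Armstrong--Smart \cite{armstrong2010easy}, combining the max-ball statement \cref{lem:nonloc2loc} with the nonlocal maximum principle \cref{lem:nonlocal_max_princ}. Suppose $u_1,u_2\in C(\closure\domain)$ both solve \labelcref{eq:continuum_problem}. For a fixed $\nlscale>0$ I would treat $u_1$ as a subsolution (it satisfies CDF from above since it is continuous) and $u_2$ as a supersolution, and form the sup- and inf-convolutions $T^\nlscale u_1$ and $T_\nlscale u_2$. Then \cref{lem:nonloc2loc} yields
\[
-\Delta_\infty^\nlscale\, T^\nlscale u_1 \;\leq\; 0 \;\leq\; -\Delta_\infty^\nlscale\, T_\nlscale u_2 \qquad \text{on }\domain_\constr^{2\nlscale},
\]
and applying \cref{lem:nonlocal_max_princ} with $C=0$ to the pair $(T^\nlscale u_1, T_\nlscale u_2)$ gives
\[
\sup_{\domain_\constr^\nlscale}\bigl(T^\nlscale u_1-T_\nlscale u_2\bigr) \;=\; \sup_{\domain_\constr^\nlscale\setminus\domain_\constr^{2\nlscale}}\bigl(T^\nlscale u_1-T_\nlscale u_2\bigr).
\]

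\textbf{Controlling the boundary layer.} The delicate step, and what I expect to be the main technical point, is to bound the right-hand side using the common boundary data $u_1=u_2=g$ on $\constr$. For any $x$ in the annular set $\domain_\constr^\nlscale\setminus\domain_\constr^{2\nlscale}$, compactness of $\constr$ together with the definition \labelcref{eq:inner_parallel_set} yields a point $x_0\in\constr$ with $d_\domain(x,x_0)\leq 2\nlscale$. Hence every $y\in\closure B_\domain(x,\nlscale)$ satisfies $d_\domain(y,x_0)\leq 3\nlscale$, so writing $u_1(y)-u_2(z)=(u_1(y)-g(x_0))-(u_2(z)-g(x_0))$ for $y,z\in\closure B_\domain(x,\nlscale)$ and invoking the uniform continuity of $u_1,u_2$ on the compact length space $(\closure\domain,d_\domain)$ with a common modulus $\omega$ gives
\[
T^\nlscale u_1(x)-T_\nlscale u_2(x) \;\leq\; 2\omega(3\nlscale).
\]

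\textbf{Conclusion.} Combining the two displays and using the pointwise bounds $u_1\leq T^\nlscale u_1$ and $u_2\geq T_\nlscale u_2$, I obtain
\[
\sup_{\domain_\constr^\nlscale}(u_1-u_2) \;\leq\; 2\omega(3\nlscale).
\]
Since $\omega(3\nlscale)\to 0$ and the sets $\domain_\constr^\nlscale$ exhaust $\domain_\constr$ as $\nlscale\downarrow 0$, sending $\nlscale\downarrow 0$ yields $u_1\leq u_2$ on $\domain_\constr$; together with $u_1=u_2=g$ on $\constr$ this gives $u_1\leq u_2$ on all of $\closure\domain$. Swapping the roles of $u_1$ and $u_2$ produces the reverse inequality, so $u_1=u_2$. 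The only subtlety I foresee is making sure that the inner parallel sets $\domain_\constr^\nlscale$ and the balls $\closure B_\domain$ behave cleanly under the geodesic (length-space) topology used for CDF in \cref{def:comparison_distance_functions}, but all of this has already been set up in \cref{subsec:continuum_problem,sec:max_ball_perturb}, so no new ingredients should be needed.
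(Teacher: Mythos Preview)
Your proposal is correct and follows essentially the same Armstrong--Smart strategy as the paper: apply \cref{lem:nonloc2loc} to $T^\nlscale u_1$ and $T_\nlscale u_2$, invoke the nonlocal maximum principle \cref{lem:nonlocal_max_princ}, and send $\nlscale\downarrow 0$. In fact you are more careful than the paper, which simply writes ``Sending $\nlscale\searrow 0$ implies $\sup_{\closure\domain}(u-v)=\sup_{\constr}(u-v)=0$'' without spelling out the boundary-layer control via a modulus of continuity that you provide explicitly.
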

\begin{proof}
The proof works verbatim as for the main result in \cite{armstrong2010easy}.
Letting $u,v\in C(\closure\domain)$ denote two solutions, \cref{lem:nonloc2loc} implies that
\begin{align*}
    -\Delta_\infty^\nlscale T^\nlscale u(x_0) 
    \leq 
    0
    \leq -\Delta_\infty^\nlscale T^\nlscale v(x_0) ,\quad\forall x_0\in\domain^{2\nlscale}_\constr.
\end{align*}
Now \cref{lem:nonlocal_max_princ} implies
\begin{align*}
    \sup_{\domain^\nlscale_\constr}(T^\nlscale u - T_\nlscale v) = 
    \sup_{\domain^\nlscale_\constr\setminus\domain^{2\nlscale}_\constr}(T^\nlscale u - T_\nlscale v).
\end{align*}
Sending $\nlscale\searrow 0$ implies
\begin{align*}
    \sup_{\closure\domain}(u-v) = 
    \sup_{\constr}(u-v) = 0
\end{align*}
and hence $u\leq v$ in $\closure\domain$.
Swapping the roles of $u$ and $v$ finally implies that $u=v$ on $\closure\domain$.
\end{proof}


\section{Discrete-to-Continuum Convergence}\label{sec:convergence}

In this section we first prove convergence rates of the graph distance functions defined in \cref{sec:discrete_problem} to geodesic distance functions, in the continuum limit on geometric graphs.
Then we utilize this to prove convergence rates of solutions to the graph infinity Laplacian equation \labelcref{eq:graph_inflap} to the continuum problem \labelcref{eq:continuum_problem}.

We write $B(x,r):=\{y\in\R^d\st\abs{x-y}<r\}$ for the Euclidean open ball of radius $r>0$ centered at $x\in \R^d$ and denote its closure by $\revision{\closure B(x,r)}$.
Recall that we define the geodesic distance $d_\domain(x,y)$ on $\closure\domain$ by
\[d_\domain(x,y) = \inf\left\{ \int_0^1 |\dot{\xi}(t)|\, \de t \st \xi\in C^1([0,1];\closure \domain) \text{ with } \xi(0)=x \text{ and } \xi(1)=y\right\}.\]
If the line segment from $x$ to $y$ is contained in $\closure \domain$, then $d_\domain(x,y)=|x-y|$. 
Remember that we pose \cref{ass:geodesic_euclidean_2} on the relation between the geodesic and Euclidean distance.
In the following we list important cases where the assumption is satisfied.
\begin{proposition}\label{prop:boundary_reg}
Let $\domain\subset\R^d$ be an open and bounded domain.
\begin{enumerate}
    \item If $\domain$ is convex, \cref{ass:geodesic_euclidean_2} is satisfied with $\phi=0$.
    \item If $\domain$ has a $C^{1,\revision{\alpha}}$ boundary \revision{for $\alpha\in(0,1]$}, \cref{ass:geodesic_euclidean_2} is satisfied with $\phi(h)=Ch^{\revision{1+\alpha}}$ for some constant $C>0$.
\end{enumerate}
\end{proposition}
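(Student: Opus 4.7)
Part 1 is immediate: for convex $\domain$ and any $x,y \in \closure\domain$, the Euclidean segment $t \mapsto (1-t)x + ty$ lies in $\closure\domain$ and has length $|x-y|$, so $d_\domain(x,y) = |x-y|$ and \cref{ass:geodesic_euclidean_2} holds with $\phi \equiv 0$.

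For part 2, the plan is to pass to local graph coordinates on $\partial\domain$ and construct an explicit detour that stays inside $\closure\domain$. By compactness of $\partial\domain$ and the $C^{1,\alpha}$ assumption, I would first fix uniform constants $r_0>0$ and $L>0$ such that at every $p \in \partial\domain$, after a rigid motion sending $p$ to the origin and the outward unit normal to $e_d$, there exists a function $\gamma \in C^{1,\alpha}(\R^{d-1})$ with $\gamma(0)=0$, $\nabla\gamma(0)=0$, and $\nabla\gamma$ Hölder continuous with constant at most $L$, such that $\closure\domain \cap B(0,r_0) = \{(x',x_d) : x_d \leq \gamma(x')\} \cap B(0,r_0)$.

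Now fix $x,y \in \closure\domain$ with $h := |x-y|$ sufficiently small (say $h \leq r_\domain$ with $r_\domain$ a small multiple of $r_0$), and distinguish two cases. If $\dist(x,\partial\domain) \geq h$, then $B(x,h) \subset \closure\domain$, so the straight segment from $x$ to $y$ is in $\closure\domain$ and $d_\domain(x,y) = h$. Otherwise, pick $p \in \partial\domain$ with $|x-p| \leq h$ (so $x,y \in B(p,2h)$) and pass to the chart at $p$. In these coordinates I write $x=(x',x_d)$, $y=(y',y_d)$ with $|x'|,|y'| \leq 2h$, $x_d \leq \gamma(x')$, and $y_d \leq \gamma(y')$. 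The key estimate, obtained by writing $\gamma(y') - \gamma(x')$ and $\gamma((1-t)x'+ty') - \gamma(x')$ as integrals of $\nabla\gamma$ along straight lines and invoking its $\alpha$-Hölder continuity, is the concavity-defect bound $\psi(t) := (1-t)\gamma(x') + t\gamma(y') - \gamma((1-t)x' + ty') \leq L h^{1+\alpha}$, uniformly in $t \in [0,1]$.

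The detour is a three-piece polygonal curve: set $\tilde x := x - L h^{1+\alpha} e_d$ and $\tilde y := y - L h^{1+\alpha} e_d$, and concatenate $x \to \tilde x \to \tilde y \to y$. The outer legs move in the inward normal direction $-e_d$ with $x'$ (resp.\ $y'$) fixed, so they preserve the inequality $x_d \leq \gamma(x')$ and stay in $\closure\domain$. The middle leg is the chord from $x$ to $y$ translated by $-L h^{1+\alpha} e_d$; on it, the required inequality $(1-t)x_d + ty_d - L h^{1+\alpha} \leq \gamma((1-t)x'+ty')$ follows from $(1-t)x_d + ty_d \leq (1-t)\gamma(x') + t\gamma(y')$ combined with the bound on $\psi$. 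Hence the curve lies in $\closure\domain$ and has total length $h + 2L h^{1+\alpha}$, which yields \cref{ass:geodesic_euclidean_2} with $\phi(h) = 2L h^{1+\alpha}$. The main technical point I expect is to pick $r_\domain$ small enough in terms of $r_0$ to ensure the entire detour fits inside a single chart $B(p,r_0)$, which is straightforward once the inward push $L h^{1+\alpha}$ is recognized as being negligible compared to $h$.
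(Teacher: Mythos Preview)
Your proof is correct and takes a genuinely different route from the paper's.

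The paper proceeds in two steps. First, it uses a maximum-principle argument for the eikonal equation $|\nabla d_\domain(x,\cdot)|=1$ to establish the reduction
\[
d_\domain(x,y) \leq |x-y| + \sup_{z \in \partial\domain \cap B(x,|x-y|)}\bigl\{ d_\domain(x,z) - |x-z|\bigr\},
\]
so that it suffices to bound $d_\domain(x,z)-|x-z|$ when one endpoint $z$ lies on $\partial\domain$. Second, for such a boundary point it passes to a chart centered at $z$ and builds a single smooth curve $t\mapsto tx + (\Phi(t\overline x) - t\Phi(\overline x))e_d$ that follows the boundary graph and has length $|x-z| + O(|x-z|^{1+\alpha})$.

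Your argument bypasses the PDE step entirely: you go directly to a chart containing both $x$ and $y$, push the Euclidean chord inward by $Lh^{1+\alpha}$ along the normal, and use the concavity-defect bound $\psi(t)\leq Lh^{1+\alpha}$ (which your integral-of-$\nabla\gamma$ sketch indeed yields, since $|y'-x'|\leq h$ and $\nabla\gamma$ is $\alpha$-H\"older with constant $L$) to keep the translated chord below the graph. This is more elementary and fully constructive; the paper's approach, by contrast, isolates a reusable reduction lemma and fits the viscosity-solution theme of the article. One very minor point: your three-piece path is only piecewise $C^1$, so strictly speaking you should smooth the two corners before invoking the definition of $d_\domain$; this costs nothing in the length estimate and is routine.
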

\begin{proof}
\revision{
The proof of 1 is immediate. The proof of 2 is split into 3 steps.
\paragraph{Step 1} We first show that for every $x,y\in \overline{\domain}$ we have
\begin{equation}\label{eq:geod_eucl1}
d_\domain(x,y) \leq |x-y| + \sup_{z\in \partial \domain \cap B(x,r)} \left\{ d_\domain(x,z) - |x-z|\right\},
\end{equation}
where $r=|x-y|$. To see this, we use a maximum principle argument. Let $u(z) = d_\domain(x,z)$, $v(z)= |x-z|$, $\lambda>1$ and set $V= \domain \cap B(x,r)$. Let $z_*\in \overline{V}$ be a point where $u-\lambda v$ attains its maximum value over $\overline{V}$. Since $u$ is a viscosity solution of $|\nabla u|=1$ on $V\setminus \{x\}$ and $\lambda>1$, we must have $z_*\in \partial V\cap \{x\}$.  Now, we claim that $z_*\not\in \domain\cap \partial B(x,r)$, since if this were the case, then we could find a small $\varepsilon>0$ such that $B(z_*,\varepsilon)\subset \domain$, and setting $\nu = \frac{x - z_*}{r}$ we have
\[u(z_* + \varepsilon \nu) - \lambda v(z_* + \varepsilon\nu) \geq u(z_*) - \varepsilon - \lambda v(z_*) + \lambda\varepsilon = u(z_*) - \lambda v(z_*) + (\lambda-1)\varepsilon,  \]
which contradicts the optimality of $z_*$, as $z_* + \varepsilon \nu\in V$. Thus $z_*\in \partial\domain \cap B(x,r)$ or $z_*=x$. If $z_*=x$ then $u - \lambda v \leq 0$ on $\overline{V}$. If $z_*\in \partial \domain \cap B(x,r)$ then 
\[u - \lambda v \leq \sup_{\partial \domain \cap B(x,r)} \left\{ u(z) - \lambda v\right\}.\]
Sending $\lambda\searrow 1$ establishes \labelcref{eq:geod_eucl1}. 

\paragraph{Step 2} We now show that
\begin{equation}\label{eq:boundary_est}
d_\domain(x,z) \leq |x-z| + C|x-z|^{1+\alpha},
\end{equation}
whenever  $x\in \overline{\domain}$, $z\in \partial\domain$, and $|x-z|\leq r_\domain$, where $r_\domain>0$ will be determined below and $C$ depends only on $\domain$. Without loss of generality, we may assume that $z=0$, and that
\begin{equation}\label{eq:bound_reg}
\domain\cap B_r = \{x\in B_r \, : \, x_d < \Phi(\overline{x})\},
\end{equation}
where $\overline{x} = (x_1,\dots,x_{d-1})$, $\Phi:\R^{d-1} \to \R$ is $C^{1,\alpha}$ with $\nabla \Phi(0)=0$, and $0 < r \leq r'_\domain$, where $r'_\domain$ depends only on $\domain$.
Since $\partial\domain$ is $C^{1,\alpha}$, there exists $C>0$ so that
\begin{equation}\label{eq:reg}
|\Phi(\overline{x})| \leq C|x|^{1+\alpha} \ \ \text{and}  \ \ |\nabla \Phi(\overline{x})|\leq C|x|^\alpha.
\end{equation}
Note we can write
\begin{equation}\label{eq:length}
d_\domain(x,0) = \inf_\gamma \int_0^1 |\gamma'(t)| \, dt,
\end{equation}
where the infimum is over $C^1$ curves $\gamma:[0,1]\to \overline{\domain}$ satisfying $\gamma(0)=0$ and $\gamma(1)=x$. Let us set
\[\gamma(t) =  tx + (\Phi(t\overline{x}) - t\Phi(\overline{x}))e_d.\]
Then $\gamma(0)=0$ and $\gamma(1)=x$.  Since $x_d \leq \Phi(\overline{x})$ we have
\[\gamma_d(t) = tx_d +\Phi(t\overline{x}) - t\Phi(\overline{x}) = t(x_d - \Phi(\overline{x_d})) + \Phi(t\overline{x}) \leq \Phi(t\overline{x}) =\Phi\left(\overline{\gamma(t)}\right).\]
By \labelcref{eq:reg} we have
\[|\gamma(t)| \leq t|x| + |\Phi(t\overline{x})| + t|\Phi(\overline{x})| \leq |x| + C|x|^{1+\alpha}.\]
Let us set $r=(1+C)|x|$ and restrict $|x|\leq r_\domain$, where $r_\domain \leq 1$ is sufficiently small so that $|x|\leq r_\domain$ implies $r \leq r'_\domain$ (i.e., $(1+C)r_\domain \leq r'_\domain$). Since $|x|\leq 1$ we have
\[|\gamma(t)| \leq (1+C)|x| = r,\]
and so $\gamma(t) \in B_r$ for all $0\leq t\leq 1$. It follows that $\gamma(t) \in \overline{\domain} \cap B_r$ for all $0\leq t\leq 1$.  We now compute
\[\gamma'(t) = x + \left(\langle \nabla \Phi(t\overline{x}),\overline{x}\rangle - \Phi(\overline{x})\right) e_d,\]
and so by \labelcref{eq:reg} we have
\[|\gamma'(t)| \leq |x| + |\nabla \Phi(\overline{x})| |x| + |\Phi(\overline{x})| \leq |x| + C|x|^{1+\alpha}.\]
Substituting this into \labelcref{eq:length} completes the proof of \labelcref{eq:boundary_est}.

\paragraph{Step 3} Combining steps 1 and 2 we find that
\[d_\domain(x,y) \leq |x-y| + C\sup_{z\in \partial \domain \cap B(x,|x-y|) }|x-z|^{1+\alpha} \leq |x-y| + C|x-y|^{1+\alpha},\] 
provided $|x-y|\leq r_\domain$, which completes the proof.}
\end{proof}
%
%

\begin{remark}
\cref{ass:geodesic_euclidean_2} is slightly stronger than the one made in \cite{roith2021continuum} which takes the form
\begin{align*}
    \limsup_{h\downarrow 0}\sup_{\abs{x-y}\leq h}\frac{d_\domain(x,y)}{\abs{x-y}}=1.
\end{align*}
This is an asymptotic version of \cref{ass:geodesic_euclidean_2} which was used in \cite{roith2021continuum} to prove a Gamma-convergence result.
However, for the quantitative analysis in this paper, this does not suffice.
\end{remark}

\revision{Note that also non-smooth domains can satisfy \cref{ass:geodesic_euclidean_2}.
A particular interesting example is the following star-shaped subset of $\R^2$:
\begin{align}\label{eq:neumann_star}
    \domain:=\left\{x\in[0,1]^2 \st \abs{x_1}^{2/3}+\abs{x_2}^{2/3}\leq 1\right\}.
\end{align}
See \cref{sec:numerics} for figures and numerical examples on this domain.
While the boundary $\partial \domain$ is only $C^{0,\frac{2}{3}}$ H\"older regular, we still have the following result.
\begin{proposition}\label{prop:neumann_star}
The domain \labelcref{eq:neumann_star} satisfies \cref{ass:geodesic_euclidean_2} with $\phi(h) = C h^{\frac{3}{2}}$ for some constant $C>0$ depending on $\domain$. 
It even holds that
\begin{equation}\label{eq:NS_reg}
d_\domain (x,y) \leq |x-y| + C|x-y|^{\frac{3}{2}},\quad\forall x,y\in\domain.
\end{equation}
\end{proposition}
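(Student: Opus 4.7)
My plan is to follow the two-step template from the proof of Proposition~4.1. The reduction step there (Step~1) is purely a maximum-principle argument applied to $u(\xi)=d_\domain(x,\xi)$ on its eikonal equation, and goes through without any regularity assumption on $\partial \domain$, yielding
\[
    d_\domain(x,y) \leq |x-y| + \sup_{z \in \partial \domain \cap B(x,|x-y|)}\bigl(d_\domain(x,z) - |x-z|\bigr)
\]
for every $x,y \in \closure{\domain}$. To conclude, it therefore suffices to establish the boundary estimate $d_\domain(x,z) - |x-z| \leq C|x-z|^{3/2}$ for every $z \in \partial \domain$ and $x \in \closure{\domain}$ with $|x-z|$ small.

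Next I would decompose $\partial \domain$ into the two open axis segments, the open astroid arc between $(1,0)$ and $(0,1)$, and three distinguished points: the convex corner at the origin and the two cusps $(1,0)$ and $(0,1)$. On a neighbourhood of any point lying at positive distance from both cusps, $\partial \domain$ is uniformly $C^{1,1}$ (in fact $C^\infty$ on the open astroid arc, and a flat segment on the axes), so Step~2 of the proof of Proposition~4.1 applies directly and gives the stronger bound $d_\domain(x,z)-|x-z| \lesssim |x-z|^2$. Near the origin the domain is locally a convex quadrant, so geodesics are Euclidean segments and no detour is incurred.

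The only non-standard case is therefore $z$ within a small neighbourhood of a cusp, which by symmetry I treat at $(1,0)$. In the coordinate $s := 1 - \xi_1$ the astroid arc is parameterised as $\xi_2 = \gamma(s) := (1-(1-s)^{2/3})^{3/2}$ and expands as $\gamma(s) = K s^{3/2} + O(s^{5/2})$ with $K = (2/3)^{3/2}$; in particular every $(\xi_1, \xi_2)\in \closure{\domain}$ with small $s = 1-\xi_1$ satisfies $\xi_2 \leq 2K s^{3/2}$. Writing $s_x := 1-x_1$ and $s_z := 1-z_1$, the rectangular path $x \to (x_1, 0) \to (z_1, 0) \to z$ lies in $\closure{\domain}$ (the $x$-axis segment is part of the closure), has length $x_2 + |x_1-z_1| + z_2$, and gives detour at most $x_2 + z_2 \lesssim s_x^{3/2} + s_z^{3/2}$. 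The key dichotomy I would then establish is that whenever the straight segment from $x$ to $z$ fails to remain in $\closure{\domain}$, the horizontal gap $|x_1 - z_1|$ is comparable to $\max(s_x, s_z)$, forcing $|x-z| \gtrsim \max(s_x, s_z)$ and hence $x_2 + z_2 \lesssim |x-z|^{3/2}$; otherwise the chord itself is admissible and no detour is needed.

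The main obstacle is precisely this dichotomy near the cusp: one has to verify, uniformly over all configurations of $x \in \closure{\domain}$ and $z \in \partial \domain$ (on an axis, on the astroid, or in the interior), that either the chord stays in the closure or else $|x-z|$ is already at the scale $\max(s_x, s_z)$. The cuspidal expansion $\gamma(s)\sim s^{3/2}$ then makes the vertical width of the domain at depth $s$ into the cusp so small that the required detour, when it exists, scales precisely like $\max(s_x,s_z)^{3/2}$, which is exactly the $3/2$-power in the conclusion.
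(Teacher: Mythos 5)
Your Step~1 reduction and the observation that the astroid arc is $C^{1,1/2}$ at the cusp are sound, but the argument you outline for the cusp case hinges on a dichotomy that is false, and fixing it requires a genuinely different idea. Consider two points on the upper astroid arc near $(1,0)$, say $x=(1-s,\,f(1-s))$ and $z=(1-s-\delta,\,f(1-s-\delta))$ with $f(\xi_1)=(1-\xi_1^{2/3})^{3/2}$ and $0<\delta\ll s$. A direct computation gives $f''>0$ near $\xi_1=1$, so the chord from $x$ to $z$ lies strictly above the graph of $f$ and therefore exits $\closure\domain$. Yet $|x_1-z_1|=\delta\ll s\approx\max(s_x,s_z)$, so the second alternative of your dichotomy fails as well. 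In this regime the rectangular path $x\to(x_1,0)\to(z_1,0)\to z$ is catastrophically long compared with $|x-z|$: it gives a detour of order $s^{3/2}$, while the quantity you need to beat, $|x-z|^{3/2}$, is of order $\delta^{3/2}\ll s^{3/2}$. The true detour in this configuration is small (one can check it is of order $\kappa(s)^2|x-z|^3\sim|x-z|^3/s\lesssim|x-z|^{3/2}$ because $|x-z|\ll s$), but your axis-hugging path does not see it, and a correct bound must exploit the local $C^{1,1/2}$ regularity of the arc rather than descend all the way to the axis.

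The paper circumvents exactly this difficulty by a different decomposition. Rather than localize at the cusp, it first observes (via a projection argument) that a geodesic between two points in the same coordinate quadrant never leaves that quadrant, so $d_\domain(x,y)=d_{\domain'}(x,y)$ for the enlarged domain $\domain'=\domain\cup(\R^2\setminus[0,\infty)^2)$. Near the cusp $(1,0)$ this $\domain'$ \emph{is} a one-sided subgraph $\{x_2<\Phi(x_1)\}$ with $\Phi\in C^{1,1/2}$, which is precisely what Step~2 of \cref{prop:boundary_reg} requires; one then gets $d_\domain(x,z)\leq|x-z|+C|x-z|^{3/2}$ for same-quadrant pairs for free, and the cross-quadrant cases follow from two (resp.\ three) applications of the triangle inequality through boundary points lying on the chord. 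Your version would need an analogous device to turn the two-sided cusp neighbourhood into a one-sided graph — or, equivalently, you would have to replace the rectangular path near the cusp by a path along the arc itself and estimate its excess length against $|x-z|^{3/2}$ uniformly in the depth $s$. As it stands, the dichotomy you flag as the main obstacle is not merely unproved but actually false, so the proposal does not go through.
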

\begin{proof}
We sketch a proof of this here. 
We first assume $x,y\in \domain$ are in the same quadrant, which we can assume, by symmetry, to be the first quadrant $\domain_1:=\domain \cap [0,\infty)^2$. In this case, the shortest path between $x$ and $y$ must also lie in the quadrant $\domain_1$, since if it were ever to leave and subsequently return to the quadrant, we could construct a path with strictly shorter length by projecting the portion of the path that left $\domain_1$ back to the quadrant. Given this observation, we have that 
\[d_\domain(x,y) = d_{\domain'}(x,y),\]
where the domain $\domain'$ given by
\[\domain' = \domain  \cup (\R^2 \setminus [0,\infty)^2).\]
It can be easily calculated that the domain $\domain'$ has a $C^{1,\frac{1}{2}}$ boundary, and so it follows from part 2 of \cref{prop:boundary_reg} that \labelcref{eq:NS_reg} holds whenever $x,y\in \domain$ are in the same quadrant.

The case where $x$ and $y$ lie in different quadrants is handled by symmetry. We first consider the case where $x$ and $y$ lie in two different quadrants that belong to a common halfspace. Without loss of generality, we take $x\in \domain_1$ and 
\[y\in \domain_2=\domain\cap [0,\infty)\times (-\infty,0].\]
These quadrants belong to the common halfspace $H:=[0,\infty)\times \R$, and a similar argument as above can be made to show that the shortest path between $x$ and $y$ must remain in $H\cap \domain$. Let $z\in H\cap \partial \domain_1\cap \partial \domain_2$ be the point on the line segment between $x$ and $y$. Then by the triangle inequality we have
\begin{equation}\label{eq:triangle_ineq}
d_\domain(x,y) \leq d_\domain(x,z) + d_\domain(z,y).
\end{equation}
Since $x,z\in \overline{\domain_1}$, and $z,y\in \overline{\domain_2}$, we can apply \labelcref{eq:NS_reg} to obtain
\[d_\domain(x,y) \leq |x-z| + |z-y| + C(|x-z|^{\frac{3}{2}} + |z-y|^{\frac{3}{2}}). \]
Since $z$ is on the line segment between $x$ and $y$ we have $|x-z| + |z-y| = |x-y|$ and so
\[d_\domain(x,y) \leq |x-y| + 2C|x-y|^{\frac{3}{2}}. \]

The last case is where $x$ and $y$ belong to diagonally opposing quadrants. Without loss we can consider $x\in \domain_1$ and 
\[y\in \domain_3=\domain\cap (-\infty,0]^2.\]
Here, we let $z\in \partial \domain_3$ be the point along the line segment between $x$ and $y$. Since the points $x$ and $z$ lie in the same halfspace, the previous step shows that
\[d_\domain(x,z) \leq |x-z| + 2C|x-z|^{\frac{3}{2}}. \]
Since $z$ and $y$ lie in the same quadrant we have 
\[d_\domain(z,y) \leq |z-y| + C|z-y|^{\frac{3}{2}}. \]
Inserting these into the triangle inequality \labelcref{eq:triangle_ineq} yields
\[d_\domain(x,y) \leq |x-y| + 3C|x-y|^{\frac{3}{2}}, \]
which completes the proof.
\end{proof}
}

\subsection{Geometric Graphs}

Remember that we consider a set of points $\domain_n\subset\closure\domain$ and a subset $\constr_n\subset\domain_n$ of labelled vertices, which acts as the discrete analog of $\constr$. 
On the point cloud $\domain_n$ we define the geometric graph $\vec G_{n,\gscale}=(\domain_n,\vec W_{n,\gscale})$ with vertices $\domain_n$ and a set of edge weights $\vec W_{n,\gscale}=(\vec w_{n,\gscale}(\vec x,\vec y))_{\vec x,\vec y\in \domain_n}$ given by
\begin{equation}\label{eq:weights}
\vec w_{n,\gscale}(\vec x,\vec y) =
\begin{cases}
0,\quad&\vec x = \vec y, \\
\sigma_\eta^{-1}\eta_\gscale\left(|\vec x-\vec y| \right),\qquad&\revision{\vec x\neq\vec y,}
\end{cases}
\end{equation}
where
\begin{equation}\label{eq:sigmaeta}
\sigma_\eta = \sup_{\revision{t>0}}t\eta(t).
\end{equation}
By \cref{ass:kernel} we can choose $t_0\in (0,1]$ as a maximum of $t\eta(t)$, so that $\sigma_\eta = t_0\eta(t_0)$. When the value of $t_0$ is not unique, the convergence rates below are slightly better by choosing the largest such $t_0$.

To simplify notation we will write $\vec G_n$ in place of $\vec G_{n,\gscale}$, and $\vec w_n$ in place of $\vec w_{n,\gscale}$, when the value of $\gscale$ is clear from the context.   We will denote the graph distance function $\vec d_{\vec G_n}$, defined in \labelcref{eq:graph_dist} by $\vec d_n:\domain_n\times \domain_n\to \R$ and the Lipschitz constant of a function $\vec u_n:\domain_n\to\R$ as $\Lip_n(\vec u_n):=\Lip_{\vec G_n}(\vec u_n)$.

\subsection{Convergence of Cones}
\label{sec:cones}

The proof of convergence of the graph distance $\vec d_n$ to the distance function $d_\domain$ is split into two parts. \cref{lem:cone_lower} gives the lower bound, while \cref{lem:cone_upper} gives the upper.
\begin{lemma}\label{lem:cone_lower}
Let \cref{ass:kernel,ass:geodesic_euclidean_2} hold.
Then for $\gscale \leq r_\domain$ and all $\vec x,\vec y\in \domain_n$ we have
\begin{equation}\label{eq:cone_lower}
\vec d_n(\vec x,\vec y) \geq  |\vec x - \vec y| \vee \left(1-\frac{ \phi(\gscale)}{\gscale}\right)d_\domain(\vec x,\vec y).
\end{equation}
\end{lemma}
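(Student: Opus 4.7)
The strategy is straightforward: take any admissible graph path, use the kernel normalization to convert each edge weight reciprocal into a lower bound on the Euclidean edge length, and then invoke \cref{ass:geodesic_euclidean_2} edge-by-edge to translate Euclidean lengths into geodesic ones. Concretely, let $\vec x = \vec x_0, \vec x_1, \dots, \vec x_N = \vec y$ be any path in $\vec G_n$ with strictly positive weights on each edge. Since $\supp\eta\subset[0,1]$ and $\eta_\gscale(s)=\frac{1}{\sigma_\eta\gscale}\eta(s/\gscale)$, positivity of each weight forces $|\vec x_{i-1}-\vec x_i|\leq\gscale\leq r_\domain$. Using the defining bound $t\eta(t)\leq\sigma_\eta$ from \cref{ass:kernel} at $t=|\vec x_{i-1}-\vec x_i|/\gscale$ gives
\[
\vec w_n(\vec x_{i-1},\vec x_i)\,|\vec x_{i-1}-\vec x_i|\leq 1,
\qquad\text{i.e.,}\qquad
\vec w_n(\vec x_{i-1},\vec x_i)^{-1}\geq |\vec x_{i-1}-\vec x_i|.
\]

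Summing over $i=1,\dots,N$ and applying the Euclidean triangle inequality immediately yields
\[
\sum_{i=1}^N \vec w_n(\vec x_{i-1},\vec x_i)^{-1} \geq \sum_{i=1}^N |\vec x_{i-1}-\vec x_i| \geq |\vec x-\vec y|,
\]
which, after taking the infimum over all paths in the definition \labelcref{eq:graph_dist} of $\vec d_n$, gives the first half of the claimed bound.

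For the second half, I would apply \cref{ass:geodesic_euclidean_2} to each individual edge. Since $|\vec x_{i-1}-\vec x_i|\leq\gscale\leq r_\domain$, we have
\[
d_\domain(\vec x_{i-1},\vec x_i) \leq |\vec x_{i-1}-\vec x_i| + \phi(|\vec x_{i-1}-\vec x_i|) \leq \bigl(1+\sigma_\phi(\gscale)\bigr)\,|\vec x_{i-1}-\vec x_i|,
\]
where the last step uses the definition $\sigma_\phi(\gscale)=\sup_{0<s\leq\gscale}\phi(s)/s$. Summing and using the triangle inequality for the length-space metric $d_\domain$ yields
\[
d_\domain(\vec x,\vec y)\leq\sum_{i=1}^N d_\domain(\vec x_{i-1},\vec x_i)\leq \bigl(1+\sigma_\phi(\gscale)\bigr)\sum_{i=1}^N|\vec x_{i-1}-\vec x_i|.
\]
Combining with the weight bound and using $1/(1+t)\geq 1-t$, we obtain
\[
\sum_{i=1}^N \vec w_n(\vec x_{i-1},\vec x_i)^{-1} \geq \bigl(1-\sigma_\phi(\gscale)\bigr)\,d_\domain(\vec x,\vec y),
\]
and taking the infimum over paths finishes the argument. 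Under the mild monotonicity $\sigma_\phi(\gscale)=\phi(\gscale)/\gscale$ that holds for all concrete examples in \cref{prop:boundary_reg} (where $\phi(s)/s$ is monotone in $s$), this is exactly the claimed bound.

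The argument is essentially bookkeeping; the only potential wrinkle is the cosmetic difference between $\sigma_\phi(\gscale)$ and $\phi(\gscale)/\gscale$. There is no genuine obstacle, since the two are equal in all relevant cases and, in general, $\sigma_\phi(\gscale)$ is the quantity the proof naturally produces. I would expect the authors' proof to use exactly this edge-by-edge comparison, with the subtlety that one must first restrict attention to paths with positive weights (guaranteeing $|\vec x_{i-1}-\vec x_i|\leq\gscale\leq r_\domain$), which is what makes \cref{ass:geodesic_euclidean_2} applicable on every edge.
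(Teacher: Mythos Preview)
Your proof is correct and essentially identical to the paper's: take a path, use $\sigma_\eta\geq t\eta(t)$ to bound each edge weight reciprocal below by the Euclidean edge length, then apply \cref{ass:geodesic_euclidean_2} edge-by-edge and sum via the triangle inequality. You are in fact slightly more careful than the paper, which writes $\phi(\gscale)/\gscale$ where the argument really produces $\sigma_\phi(\gscale)$; as you note, the two coincide in all cases of interest.
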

\begin{proof}
Let $\vec x,\vec y\in \domain_n$. If there is no path in $\vec G_n$ from $\vec x$ to $\vec y$ then $\vec d_n(x,y) = \infty$ and \labelcref{eq:cone_lower} holds trivially. Thus, we may assume there is a path 
\[ \vec x = \vec x_1,\vec x_2,\dots,\vec x_m = \vec y\] 
such that $\vec w_n(\vec x_i,\vec x_{i+1}) > 0$ for all $i=1,\dots,m-1$. We can choose the path to be optimal for $\vec x,\vec y$, so that 
\[ \vec d_n(\vec x,\vec y) = \sum_{i=1}^{m-1} \vec w_n(\vec x_i,\vec x_{i+1})^{-1}.\]
It follows that
\[\vec d_n(\vec x,\vec y) = \sum_{i=1}^{m-1} \sigma_\eta \eta_\gscale(|\vec x_{i}-\vec x_{i+1}|)^{-1}  =\sum_{i=1}^{m-1} \sigma_\eta \gscale\eta\left(\frac{|\vec x_{i}-\vec x_{i+1}|}{\gscale}\right)^{-1}.\] 
By definition of $\sigma_\eta$, \labelcref{eq:sigmaeta}, we have $\sigma_\eta\eta(t)^{-1} \geq t$, and so
\[\vec d_n(\vec x,\vec y) \geq \sum_{i=1}^{m-1} |\vec x_{i}-\vec x_{i+1}|.\] 
It follows from the triangle inequality that $\vec d_n(\vec x,\vec y)\geq | \vec x-\vec y|$. 
Also, using \cref{ass:geodesic_euclidean_2} we have
\[d_\domain(\vec x_i,\vec x_{i+1}) \leq |\vec x_i - \vec x_{i+1}| + \phi(|\vec x_i - \vec x_{i+1}) \leq |\vec x_i - \vec x_{i+1}| +  d_\domain(\vec x_i,\vec x_{i+1})\frac{\phi(\gscale)}{\gscale}.\] 
Substituting this above yields
\[\vec d_n(\vec x,\vec y) \geq \left(1-\frac{\phi(\gscale)}{\gscale} \right)\sum_{i=1}^{m-1} d_\domain(\vec x_{i},\vec x_{i+1}) \geq \left(1-\frac{\phi(\gscale)}{\gscale} \right)d_\domain(\vec x,\vec y),\] 
where we again used the triangle inequality, this time for $d_\domain$.
\end{proof}

We now give the proof of the upper bound.
\begin{lemma}\label{lem:cone_upper}
Let \cref{ass:kernel,ass:geodesic_euclidean_2} hold.
Assume that $\gscale\leq r_\domain$ and that
\begin{equation}\label{eq:delta_upper}
\frac{\res_n}{\gscale} \leq \frac{t_0}{2(2+\frac{\phi(\res_n)}{\res_n})}.
\end{equation}
Then for any $\vec x,\vec y\in \domain_n$ we have
\begin{equation}\label{eq:cone_upper}
\vec d_n(\vec x,\vec y) \leq \left(1 + \frac{4\res_n}{t_0\gscale} + \frac{2\phi(\res_n)}{t_0\gscale}\right) d_\domain(\vec x,\vec y) + \tau_\eta h,
\end{equation}
where
\begin{equation}\label{eq:ry}
\tau_\eta := \revision{\sup_{0 < t \leq t_0}}\left\{\sigma_\eta \eta(t)^{-1} - t\right\}.
\end{equation}
\end{lemma}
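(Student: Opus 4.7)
The plan is to construct an explicit graph path from $\vec x$ to $\vec y$ by iteratively tracking a near-geodesic toward $\vec y$ in $\closure\domain$ and projecting to $\domain_n$, then to estimate the sum of inverse edge weights. The constants in the lemma are dictated by three choices: (i) each interior step should cover approximately $t_0\gscale$ in Euclidean distance, so its inverse weight is at most $\sigma_\eta\gscale/\eta(t_0)=t_0\gscale$ by monotonicity of $\eta$; (ii) the final edge, whose length is a shorter residual, is bounded by $|\vec x_N-\vec y|+\tau_\eta\gscale$ via the very definition of $\tau_\eta$; and (iii) progress in the $d_\domain$-metric is degraded by $\res_n+\phi(\res_n)$ per step by projection error and \cref{ass:geodesic_euclidean_2}.

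Concretely, set $s:=t_0\gscale-\res_n$. Given $\vec x_k\in\domain_n$ with $d_\domain(\vec x_k,\vec y)\ge s$, for arbitrarily small $\eps>0$ take a $C^1$ path from $\vec x_k$ to $\vec y$ in $\closure\domain$ of length at most $d_\domain(\vec x_k,\vec y)+\eps$, parametrize by arclength, let $\xi_k$ be its point at arclength $s$, and set $\vec x_{k+1}:=\pi_n(\xi_k)$. The Euclidean triangle inequality yields $|\vec x_{k+1}-\vec x_k|\le|\xi_k-\vec x_k|+\res_n\le s+\res_n=t_0\gscale\le \gscale$ (using $t_0\in(0,1]$), so the edge exists and, by monotonicity of $\eta$, its inverse weight is at most $t_0\gscale$. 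For the progress estimate, \cref{ass:geodesic_euclidean_2} applied to $\vec x_{k+1},\xi_k$ (at Euclidean distance $\le\res_n$) gives $d_\domain(\vec x_{k+1},\xi_k)\le\res_n+\phi(\res_n)$, and combining with $d_\domain(\xi_k,\vec y)\le d_\domain(\vec x_k,\vec y)-s+\eps$ and the geodesic triangle inequality,
\begin{align*}
d_\domain(\vec x_{k+1},\vec y)\le d_\domain(\vec x_k,\vec y)-p+\eps,\qquad p:=t_0\gscale-2\res_n-\phi(\res_n),
\end{align*}
where $p>0$ by assumption \labelcref{eq:delta_upper}.

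Iterate until the first $N$ with $d_\domain(\vec x_N,\vec y)<s$; then $|\vec x_N-\vec y|\le d_\domain(\vec x_N,\vec y)<s<t_0\gscale$, so the direct final edge to $\vec y$ exists with inverse weight at most $|\vec x_N-\vec y|+\tau_\eta\gscale$ by the definition of $\tau_\eta$. Summing, letting $\eps\downarrow 0$, and telescoping the progress $Np\le d_\domain(\vec x,\vec y)-d_\domain(\vec x_N,\vec y)$,
\begin{align*}
\vec d_n(\vec x,\vec y)\le Nt_0\gscale+|\vec x_N-\vec y|+\tau_\eta\gscale\le d_\domain(\vec x,\vec y)+N(2\res_n+\phi(\res_n))+\tau_\eta\gscale.
\end{align*}
The crude bound $N\le d_\domain(\vec x,\vec y)/p$ and the elementary inequality $a/(b-a)\le 2a/b$ with $a:=2\res_n+\phi(\res_n)$, $b:=t_0\gscale$—whose hypothesis $a\le b/2$ is precisely \labelcref{eq:delta_upper}—then convert $1+(2\res_n+\phi(\res_n))/p$ into the advertised coefficient $1+4\res_n/(t_0\gscale)+2\phi(\res_n)/(t_0\gscale)$.

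The main subtlety is that one must use the uniform bound $t_0\gscale$ on the $N$ interior edges but only the sharper $|\vec x_N-\vec y|+\tau_\eta\gscale$ estimate on the final edge; applying the $\tau_\eta$ bound to all edges would incur an $N\tau_\eta\gscale$ contribution scaling like $(\tau_\eta/t_0)\,d_\domain(\vec x,\vec y)$ and would spoil the multiplicative constant. A minor bookkeeping point is that $\phi$ is not asserted to be monotone in \cref{ass:geodesic_euclidean_2}, so one tacitly replaces it by its monotone envelope $s\mapsto\sup_{t\le s}\phi(t)$ in the projection estimate.
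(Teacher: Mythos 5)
Your proposal is correct and takes essentially the same route as the paper's proof: both construct a graph path by iterated Euclidean steps of size roughly $t_0\gscale$, incur a per-step overhead of $2\res_n+\phi(\res_n)$ in geodesic progress, bound the inverse weights of the interior edges by $t_0\gscale$ via monotonicity of $\eta$, reserve the $\tau_\eta\gscale$ estimate for the terminal edge alone, and then telescope and apply the same elementary inequality (valid under \labelcref{eq:delta_upper}) to obtain the advertised coefficient. The only difference is that the paper selects $\vec x_k$ as the $d_\domain(\cdot,\vec y)$-minimizer over $\domain_n\cap\closure B(\vec x_{k-1},t_0\gscale)$ and uses a dynamic-programming identity for $d_\domain$, whereas you track an explicit near-geodesic and project with $\pi_n$---a cosmetic variant producing the same estimates (and the monotone-envelope caveat you raise for $\phi$ applies equally to the paper's proof).
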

\begin{remark}\label{rem:}
We note that since $\eta$ is nonincreasing, for $t\leq t_0$ we have 
\[\sigma_\eta \eta(t)^{-1}  \leq \sigma_\eta \eta(t_0)^{-1} = \sigma_\eta t_0 \sigma_\eta^{-1} = t_0.\]
Therefore, $\tau_\eta\leq t_0 \leq 1$. 
In the special case of the singular kernel $\eta(t) = t^{-1}$ we have $t\eta(t) = 1$, and so $\sigma_\eta = 1$. 
Hence, it holds $\sigma_\eta \eta(t)^{-1} = t$ and $\tau_\eta = 0$. 

In fact, we can show that if $\eta$ is any kernel satisfying $\tau_\eta=0$ then $\eta(t)=\sigma_\eta t^{-1}$ for $t\in (0,t_0]$. To see this, note that if $\tau_\eta=0$ then $\sigma_\eta \leq t\eta(t)$ for all $0 < t \leq t_0$. Since $\sigma_\eta = \revision{\sup_{0< t\leq 1}} t\eta (t) \geq t\eta(t)$, we have $\sigma_\eta = t\eta(t)$ for all $0 < t \leq t_0$, which establishes the claim.

\cref{fig:cone_sim} illustrates the improved approximation accuracy obtained by using the singular kernel $\eta(t)=t^{-1}$, compared to the uniform kernel $\eta=\one_{[0,1]}$. For the uniform kernel, the graph cone gives a roughly piecewise constant staircasing approximation of the true Euclidean cone, with steps of size $O(h)$, due to the additional $\tau_\eta h$ term in \cref{lem:cone_upper}.  These artifacts are removed by using the singular kernel $\eta(t)=t^{-1}$.
\end{remark}

\revision{
\begin{remark}
The condition in \labelcref{eq:delta_upper} is a consequence of \cref{ass:scaling}.
Since $\sigma_\phi(\gscale)\geq\frac{\phi(\gscale)}{\gscale}\geq 0$, the assumption implies
\begin{align*}
\frac{\res_n}{\gscale} + \frac{\phi(\gscale_n)}{\gscale} \leq \frac{t_0}{4}\left(1-2\frac{\gscale}{\nlscale}\right)
\end{align*}
and in particular $\res_n < \gscale$. Therefore, it holds $\phi(\res_n)\leq\sigma_\phi(\gscale)\res_n\leq\sigma_\phi(\gscale)\gscale$ and we can estimate, using \cref{ass:scaling} again,
\begin{align*}
\frac{\res_n}{\gscale}\left(2 + \frac{\phi(\res_n)}{\res_n}\right)
=
2\frac{\res_n}{\gscale} + \frac{\phi(\res_n)}{\gscale}\leq
2\left(\frac{\res_n}{\gscale} + \sigma_\phi(\gscale)\right)\leq
\frac{t_0}{2}\left(1-2\frac{\gscale}{\nlscale}\right)\leq 
\frac{t_0}{2}
\end{align*}
and with this
\begin{align*}
\frac{\res_n}{\gscale} \leq \frac{t_0}{2 (2+\frac{\phi(\res_n}{\res_n})}.
\end{align*}
\end{remark}
}

\begin{figure}[!t]
\centering
\subfigure[$\eta=1_{[0,1]}$]{\includegraphics[clip=true,trim=140 70 130 95,width=0.4\textwidth]{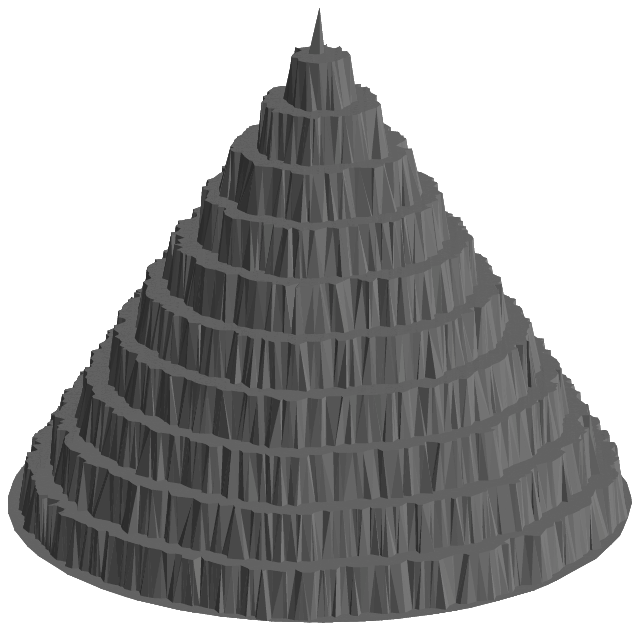}}\hspace{1cm}
\subfigure[$\eta(t)=t^{-1}$]{\includegraphics[clip=true,trim=140 70 130 90,width=0.4\textwidth]{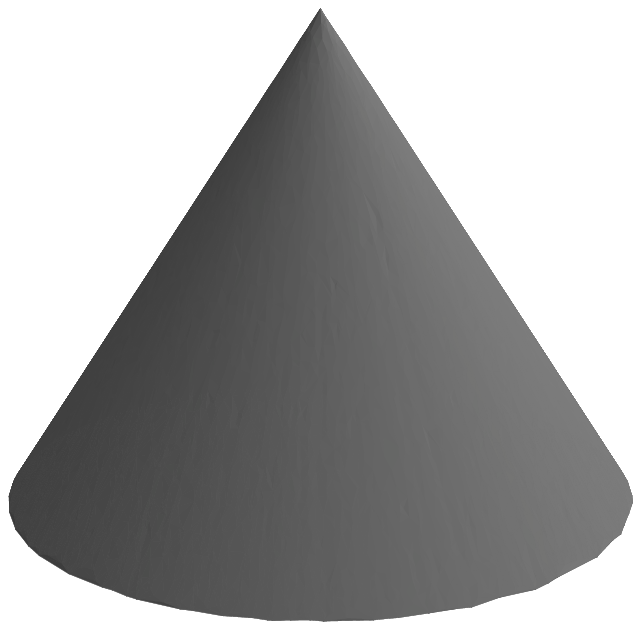}}
\caption{Examples of graph cones (graph distance functions to a point) computed with (a) the nonsingular kernel $\eta = \one_{[0,1]}$ and (b) the singular kernel $\eta(t)=t^{-1}$. The graph consists of an \emph{i.i.d.}~sample of size $n=10^4$ uniformly distributed on the unit ball $B(0,1)$, and we set $\gscale=0.1$.}
\label{fig:cone_sim}
\end{figure}

\begin{proof}
Let $\vec x,\vec y\in \domain_n$. We construct a sequence $\vec x_0,\vec x_1,\vec x_2,\dots$ as follows. Define $\vec x_0=\vec x$ and for $k\geq 1$ set 
\[ \vec x_k \in \argmin_{\domain_n \cap \closure B(x_{k-1},t_0\gscale)} d_\domain(\cdot,\vec y).\]
If $\vec y \in \closure B(\vec x_{k-1},t_0\gscale)$ then we set $\vec x_{k} = \vec y$ and the procedure terminates. We claim that whenever  $\vec y \not\in \closure B(\vec x_{k-1},t_0\gscale)$ we have
\begin{equation}\label{eq:dist_dec}
d_\domain(\vec x_k,\vec y) \leq d_\domain(\vec x_{k-1},\vec y) - (t_0h - 2\res_n - \phi(\res_n)).
\end{equation}
To see this, we start from the dynamic programming principle
\begin{align*}
    d_\domain(\vec x_{k-1},\vec y) = \min_{z\in \closure \domain \cap \partial B(\vec x_{k-1},t_0\gscale-\res_n)} \left\{ d_{\domain}(\vec x_{k-1},z) + d_\domain(z,\vec y)\right\},
\end{align*}
which is valid since \labelcref{eq:delta_upper} implies that $\res_n < t_0h$.  Choosing an optimal $z$ above yields
\begin{equation}\label{eq:dec1}
d_\domain(z,\vec y) \leq d_\domain(\vec x_{k-1},\vec y) -  t_0h + \res_n. 
\end{equation}
where we used that $d_{\domain}(\vec x_{k-1},z)\geq |\vec x_{k-1}-z| = t_0h - \res_n$.
We also have by \labelcref{eq:graph_res} that
\[|\pi_n(z)  - \vec x_{k-1}| \leq |\pi_n(z) - z| + |z - \vec x_{k-1}| \leq \res_n + t_0h - \res_n = t_0h.\]
Thus $\pi_n(z) \in \domain_n \cap \closure B(x_{k-1},t_0\gscale)$.  
Furthermore, by \cref{ass:geodesic_euclidean_2} and \labelcref{eq:dec1}, $\pi_n(z)$ satisfies
\[d_\domain(\pi_n(z),\vec y) \leq  d_\domain(z,\vec y) + d_\domain(\pi_n(z),z)\leq d_\domain(\vec x_{k-1},\vec y) -  t_0h +\res_n + \res_n + \phi(\res_n).\]
This establishes the claim \labelcref{eq:dist_dec}. 
Let us define
\[r_0 := t_0 - 2\frac{\res_n}{\gscale} - \frac{\phi(\res_n)}{\gscale}, \]
and note that \labelcref{eq:delta_upper} implies that $\res_n \leq \gscale$ and so 
\begin{equation}\label{eq:r0t0}
1 \geq  \frac{r_0}{t_0} \geq 1 -  2\frac{\res_n}{t_0h} -  \frac{\phi(\res_n) }{t_0h}= 1 - \frac{\res_n}{t_0h}\left( 2 + \frac{\phi(\res_n)}{\res_n}\right) \geq \frac{1}{2},
\end{equation}
where we again used \labelcref{eq:delta_upper} in the last inequality.
Therefore $r_0>0$ and so by  \labelcref{eq:dist_dec}  we have
\[d_\domain(\vec x_k,\vec y) \leq d_\domain(\vec x_{k-1},\vec y) - r_0\gscale\]
and hence
\[d_\domain(\vec x_k,\vec y) \leq d_\domain(\vec x,\vec y) - r_0\gscale k,\]
provided $\vec y \not\in \closure B(\vec x_{j},t_0\gscale)$ for all $j=0,\dots,k-1$. Hence, the condition $\vec y \not\in \closure B(\vec x_{j},t_0\gscale)$ for all $j=0,\dots,k-1$ implies that
\begin{equation}\label{eq:Tbound}
k \leq \frac{d_\domain(\vec x,\vec y) - d_\domain(\vec x_k,\vec y)}{r_0\gscale}.
\end{equation}
It follows that eventually $\vec y \in \closure B(\vec x_{k-1},t_0\gscale)$ and $\vec x_k = \vec y$ for some $k$. Let $T\geq 1$ denote the smallest integer such that $\vec x_T = \vec y$, and note that  \labelcref{eq:Tbound} implies 
\begin{equation}\label{eq:Tbound2}
T \leq \frac{d_\domain(\vec x,\vec y) - d_\domain(\vec x_{T-1},\vec y)}{r_0\gscale} + 1.
\end{equation}
We now compute
\begin{align*}
 \vec d_n(\vec x,\vec y) &\leq \sum_{k=1}^{T} \vec w_n(\vec x_{k-1},\vec x_{k})^{-1}
 = \sum_{k=1}^{T} \sigma_\eta \eta_\gscale(|\vec x_{k-1}-\vec x_{k}|)^{-1}
 = \sum_{k=1}^{T} \gscale\sigma_\eta \eta\left(\frac{|\vec x_{k-1}-\vec x_{k}|}{\gscale}\right)^{-1}.
\end{align*}
Since $|\vec x_{k-1}-\vec x_k| \leq t_0\gscale$ and $\eta$ is nonincreasing we have
\[\eta\left(\frac{|\vec x_{k-1}-\vec x_{k}|}{\gscale}\right)^{-1} \leq \eta\left(t_0\right)^{-1}.\]
It follows that
\begin{align}\label{eq:dn}
\vec d_n(\vec x,\vec y) &\leq \sum_{k=1}^{T-1} \gscale \sigma_\eta \eta(t_0)^{-1} +  \gscale\sigma_\eta \eta\left( \frac{|\vec x_{T-1} - \vec y|}{\gscale}\right)^{-1}\notag\\
&=\sum_{k=1}^{T-1} t_0\gscale  + \gscale\sigma_\eta \eta\left( \frac{|\vec x_{T-1} - \vec y|}{\gscale}\right)^{-1}\notag\\
&= t_0\gscale (T-1) + \gscale\sigma_\eta \eta\left( \frac{|\vec x_{T-1} - \vec y|}{\gscale}\right)^{-1}\notag\\
&\leq t_0\gscale \left( \frac{d_\domain(\vec x,\vec y) - d_\domain(\vec x_{T-1},\vec y)}{r_0\gscale}\right) + \gscale\sigma_\eta \eta\left( \frac{|\vec x_{T-1} - \vec y|}{\gscale}\right)^{-1}\notag\\
&\leq \frac{t_0}{r_0}d_\domain(\vec x,\vec y)  + \gscale\sigma_\eta \eta\left( \frac{|\vec x_{T-1} - \vec y|}{\gscale}\right)^{-1} - |\vec x_{T-1}-\vec y|\\
&\leq \frac{t_0}{r_0}d_\domain(\vec x,\vec y)  + \tau_\eta h,
\end{align}
where we used that $t_0\geq r_0$ and $d_\domain(\vec x_{T-1},\vec y) \geq |\vec x_{T-1}-\vec y|$ in the second to last inequality, and $\vec x_{T-1}\in \closure B(\vec y,t_0\gscale)$ in the last. By \labelcref{eq:r0t0} we have
\[\frac{t_0}{r_0} = \frac{1}{1 - \left( 1 - \frac{r_0}{t_0}\right)} \leq 1 + 2\left( 1 - \frac{r_0}{t_0}\right) = 1 + \frac{4\res_n}{t_0\gscale} + \frac{2\phi(\res_n)}{t_0\gscale},\]
which completes the proof.
\end{proof}

\subsection{Convergence of Infinity Harmonic Functions}
\label{sec:cvgc_inf_harm}

In the previous sections we have analyzed the continuum problem \labelcref{eq:continuum_problem}, introduced the corresponding discrete problem on a graph, and proved convergence of graph cone functions to continuum Euclidean cone functions. 
These are all the ingredients which we need in order to finally show that the solution of \labelcref{eq:graph_inflap} converges to the solution of \labelcref{eq:continuum_problem} and establish convergence rates.

For this we need to introduce discrete analogies of the operators $T^\nlscale$ and $T_\nlscale$ from \labelcref{eq:T-operators}.
For a graph function $\vec u_n:\domain_n\to\R$ we define $u_n^\nlscale,(u_n)_\nlscale:\closure\domain\to\R$ by
\begin{align}\label{eq:discr_extension}
    u_n^\nlscale(x):=\sup_{\closure B_\domain(x,\nlscale)\cap\domain_n}\vec u_n,\quad
    (u_n)_\nlscale(x):=\inf_{\closure B_\domain(x,\nlscale)\cap\domain_n}\vec u_n,\quad
    x\in\closure\domain.
\end{align}%
Before we proceed with a discrete-to-nonlocal consistency statement and prove \cref{thm:general_convergence_result}, we devote a section to collecting several technical lemmas, dealing with (approximate) Lipschitz continuity of the quantities involved.
For understanding the gist of our arguments the following section can be skipped, however. 
\subsubsection{Approximate Lipschitz Estimates}
\label{sec:lipschitz_lemmas}
\begin{lemma}\label{lem:lipschitz_NLInfL}
Under \cref{ass:kernel,ass:geodesic_euclidean_2,ass:scaling} there exists a constant $C>0$ such that for all $\vec u_n:\domain_n\to\R$ and all $x,y\in\closure\domain$ it holds
\begin{align}
    \abs{\Delta_\infty^\nlscale u_n^\nlscale(x) - \Delta_\infty^\nlscale u_n^\nlscale(y)} \leq \Lip_n(\vec u_n)C\frac{\abs{x-y}+\phi(\abs{x-y})+2\res_n+\phi(2\res_n)+\tau_\eta\gscale}{\nlscale^2}.
\end{align}
\end{lemma}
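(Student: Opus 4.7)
The first step is to write
\begin{align*}
    \nlscale^2\bigl(\Delta_\infty^\nlscale u_n^\nlscale(x) - \Delta_\infty^\nlscale u_n^\nlscale(y)\bigr) = \bigl[T^\nlscale u_n^\nlscale(x) - T^\nlscale u_n^\nlscale(y)\bigr] + \bigl[T_\nlscale u_n^\nlscale(x) - T_\nlscale u_n^\nlscale(y)\bigr] - 2\bigl[u_n^\nlscale(x) - u_n^\nlscale(y)\bigr],
\end{align*}
so by the triangle inequality it suffices to establish an approximate Lipschitz estimate for each of $u_n^\nlscale$, $T^\nlscale u_n^\nlscale$, and $T_\nlscale u_n^\nlscale$ and then divide by $\nlscale^2$.

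The heart of the proof is the following estimate for $u_n^\nlscale$: for all $x, y \in \closure\domain$,
\begin{align*}
    |u_n^\nlscale(x) - u_n^\nlscale(y)| \leq \Lip_n(\vec u_n)\Bigl(C_1\bigl(d_\domain(x,y) + 2\res_n + \phi(2\res_n)\bigr) + \tau_\eta\gscale\Bigr).
\end{align*}
To prove this I would assume without loss of generality $u_n^\nlscale(x) \geq u_n^\nlscale(y)$, pick $\vec z \in \closure B_\domain(x,\nlscale)\cap\domain_n$ achieving the supremum $u_n^\nlscale(x) = \vec u_n(\vec z)$, and construct a companion point $\vec w \in \closure B_\domain(y,\nlscale)\cap\domain_n$ by translating along a minimizing geodesic from $\vec z$ to $y$. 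Concretely, if $d_\domain(\vec z,y)\leq \nlscale$ we take $\vec w = \vec z$; otherwise we travel along the geodesic to a point $w$ with $d_\domain(w,y) = \nlscale - \res_n - \phi(\res_n)$, which forces the traveled distance to be at most $d_\domain(x,y) + \res_n + \phi(\res_n)$, and we set $\vec w := \pi_n(w)$. \cref{ass:geodesic_euclidean_2} gives $d_\domain(w, \vec w) \leq \res_n + \phi(\res_n)$, so the triangle inequality ensures that $\vec w$ lies in the required ball and
\begin{align*}
    d_\domain(\vec z, \vec w) \leq d_\domain(x,y) + 2\res_n + 2\phi(\res_n).
\end{align*}
Converting this to the graph metric via \cref{lem:cone_upper}, which is applicable thanks to \cref{ass:scaling}, yields $\vec d_n(\vec z,\vec w) \leq C_1 d_\domain(\vec z,\vec w) + \tau_\eta\gscale$, and the estimate follows from $|\vec u_n(\vec z) - \vec u_n(\vec w)| \leq \Lip_n(\vec u_n)\,\vec d_n(\vec z,\vec w)$, after absorbing $2\phi(\res_n) \leq \phi(2\res_n)$ which is valid for the superlinear $\phi$ appearing in \cref{prop:boundary_reg,prop:neumann_star}.

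The approximate Lipschitz estimates for $T^\nlscale u_n^\nlscale$ and $T_\nlscale u_n^\nlscale$ follow by analogous geodesic-translation arguments. Since $(\closure\domain,d_\domain)$ is a length space, one checks $T^\nlscale u_n^\nlscale(x) = \sup\{\vec u_n(\vec w): \vec w \in \domain_n,\, d_\domain(x,\vec w)\leq 2\nlscale\}$, so the same construction with $\nlscale$ replaced by $2\nlscale$ applies. For $T_\nlscale u_n^\nlscale$, I would pick $z_x \in \closure B_\domain(x,\nlscale)$ nearly achieving the infimum of $u_n^\nlscale$, translate along a geodesic to produce $z_y \in \closure B_\domain(y,\nlscale)$ with $d_\domain(z_x,z_y) \leq d_\domain(x,y)$, and then apply the estimate for $u_n^\nlscale$ to the pair $(z_x,z_y)$. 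Summing the three contributions, dividing by $\nlscale^2$, and bounding $d_\domain(x,y) \leq |x-y| + \phi(|x-y|)$ via \cref{ass:geodesic_euclidean_2} yields the claimed estimate. The main obstacle is the careful bookkeeping around the projection error: the point $w$ on the geodesic must be placed \emph{strictly} inside the ball by a margin $\res_n + \phi(\res_n)$ so that its $\pi_n$-projection still lies in $\closure B_\domain(y,\nlscale)$, which is precisely what produces the factor of~$2$ in the projection term $2\res_n+\phi(2\res_n)$ and ties together the graph resolution, geodesic regularity, and nonlocal scale $\nlscale$ into a single uniform bound.
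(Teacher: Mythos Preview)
Your approach is essentially identical to the paper's: the same three-term decomposition via $T^\nlscale u_n^\nlscale = u_n^{2\nlscale}$, the same geodesic-translation-plus-projection argument for $u_n^\nlscale$, and the same handling of $T_\nlscale u_n^\nlscale$ by translating the infimizer and feeding it back into the $u_n^\nlscale$ estimate.

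The one technical quibble is your final step ``absorbing $2\phi(\res_n)\leq\phi(2\res_n)$'': this convexity-type inequality is \emph{not} part of \cref{ass:geodesic_euclidean_2}, which only demands $\phi(h)/h\to 0$, so invoking the specific power-law $\phi$'s from \cref{prop:boundary_reg,prop:neumann_star} does not prove the lemma under the stated hypotheses. The paper avoids this by a slightly different placement of the intermediate point: it puts $\tilde y$ exactly on the geodesic sphere $\relpartial B_\domain(y,\nlscale)$ and then picks the graph companion $\vec y^\ast\in\closure B_\domain(y,\nlscale)\cap\domain_n$ within \emph{Euclidean} distance $2\res_n$ of $\tilde y$, so that a single application of \cref{ass:geodesic_euclidean_2} gives $d_\domain(\tilde y,\vec y^\ast)\leq 2\res_n+\phi(2\res_n)$ directly. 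If you adjust your margin so that the total Euclidean displacement from $\tilde y$ to $\vec w$ is bounded by $2\res_n$ in one shot (rather than two separate $\res_n$ hops each converted to geodesic distance), you recover the stated bound without any extra hypothesis on $\phi$.
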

\begin{proof}
We first note that by definition \labelcref{eq:nonlocal_lapl}
\begin{align*}
    \nlscale^2\abs{\Delta_\infty^\nlscale u_n^\nlscale(x)-\Delta_\infty^\nlscale(y)} 
    &\leq 
    \abs{T^\nlscale u_n^\nlscale(x) - T^\nlscale u_n^\nlscale(y)}
    +
    \abs{T_\nlscale u_n^\nlscale(x) - T_\nlscale u_n^\nlscale(y)}
    +
    2\abs{u_n^\nlscale(x)-u_n^\nlscale(y)}
    \\
    &=
    \abs{u_n^{2\nlscale}(x) - u_n^{2\nlscale}(y)}
    +
    \abs{T_\nlscale u_n^\nlscale(x) - T_\nlscale u_n^\nlscale(y)}
    +
    2\abs{u_n^\nlscale(x)-u_n^\nlscale(y)}.
\end{align*}
We first estimate the third term and then argue that the same estimate also applies to the first and second.

For $x\in \revision{\closure\domain}$ we know that there exists a point $\vec{x}^\ast\in \closure B_\domain(x,\nlscale)\cap\domain_n$ such that
\begin{align*}
u_n^\nlscale(x) = \vec{u}_n(\vec{x}^\ast).
\end{align*}
For $y\in\revision{\closure\domain}$ we construct a point $\tilde{y}\in B_\domain(y,\nlscale)$ as follows. 
Since $(\domain,d_\domain)$ is a length space we can find a geodesic $\gamma:[0,1]\to\domain$ with $\gamma(0)=y$, $\gamma(1)=\vec x^\ast$, and length $d_\domain(y,\vec x^\ast)$.
We define $\tilde y:=\gamma(t_*)$ where $t_*:=\sup\{t>0\st\gamma(t)\in B_\domain(y,\nlscale)\}$ as the last point which still lies in $\closure B_\domain(y,\nlscale)$.
We distinguish two cases:
If $\vec x^\ast\in\closure B_\domain(y,\nlscale)$ then $\tilde y=\vec x^\ast$ and hence $d_\domain(\tilde y,\vec x^\ast)=0\leq d_\domain(x,y)$.
If $\vec x^\ast\notin\closure B_\domain(y,\nlscale)$ then it holds $d_\domain(y,\tilde y)=\nlscale$.
Furthermore, since $\gamma$ is a geodesic it holds
\begin{align*}
    d_\domain(y,\vec x^\ast) = d_\domain(y,\tilde y) + d_\domain(\tilde y,\vec x^\ast).
\end{align*}
Hence, we obtain that also in this case it holds
\begin{align*}
    d_\domain(\tilde y,\vec x^\ast) 
    =
    d_\domain(y,\vec x^\ast) -
    d_\domain(y,\tilde y)
    \leq 
    d_\domain(x,y) + d_\domain(x,\vec x^\ast) - d_\domain(y,\tilde y)
    \leq
    d_\domain(x,y).
\end{align*}
By definition of the graph resolution $\res_n$ there exists a point $\vec{y}^\ast\in \closure B_\domain(y,\nlscale)\cap\domain_n$, such that $\abs{\tilde{y} - \vec{y}^\ast} \leq 2\res_n$ and by definition of $u_n^\nlscale$ it holds $\vec{u}_n(\vec{y}^\ast) \leq u_n^\nlscale(y)$.
Furthermore, by \cref{ass:geodesic_euclidean_2}
\begin{align*}
d_\domain(\vec{x}^\ast, \vec{y}^\ast) &\leq
d_\domain(\tilde{y},\vec{x}^\ast) + d_\domain(\tilde{y}, \vec{y}^\ast)
\leq
d_\domain(x,y) + 2\res_n + \phi(2\res_n).
\end{align*}
Thus, using \cref{lem:cone_upper,ass:geodesic_euclidean_2,ass:scaling} it follows that
\begin{align*}
u_n^\nlscale(x) - u_n^\nlscale(y) &\leq 
\vec{u}_n(\vec{x}^\ast) - \vec{u}_n(\vec{y}^\ast)
\\&\leq
\Lip_n(\vec{u}_n)\,\vec{d}_n(\vec{x}^\ast,\vec{y}^\ast)
\\&\leq
\Lip_n(\vec{u}_n)\,
\big(
C\,d_{\domain}(\vec{x}^\ast,\vec{y}^\ast) + \tau_\eta\gscale
\big)
\\&\leq
\Lip_n(\vec{u}_n)\,
\big(C\,(
d_\domain(x,y) + 2\res_n + \phi(2\res_n)
+\tau_\eta\gscale
\big)
\\&\leq
\Lip_n(\vec{u}_n)\,C\,
\big(
\abs{x-y} + \phi(\abs{x-y}) + 2\res_n + \phi(2\res_n)
+
\tau_\eta\gscale
\big).
\end{align*}
Exchanging the role of $x$ and $y$ we can estimate the difference $u_n^\nlscale(y) - u_n^\nlscale(x)$.
The resulting estimate on $\abs{u_n^\nlscale(x)-u_n^\nlscale(y)}$ does not depend on the choice of $\nlscale$ and is therefore also valid for $\abs{u_n^{2\nlscale}(x)-u_n^{2\nlscale}(y)}$.
Finally, we can estimate
\begin{align*}
    T_\nlscale u_n^\nlscale(x)-T_\nlscale u_n^\nlscale(y)
    &=
    \inf_{B_\domain(x,\nlscale)} u_n^\nlscale - \inf_{B_\domain(y,\nlscale)} u_n^\nlscale
    \\
    &=
    \inf_{B_\domain(x,\nlscale)} u_n^\nlscale 
    -
    u_n^\nlscale(y^\ast),
\end{align*}
where $y^\ast\in \closure B_\domain(y,\nlscale)$ realizes the infimum.
With the same trick as above we can choose a geodesic $\gamma:[0,1]\to\domain$ from $y^\ast$ to $x$ and define the first point lying in $B_\domain(x,\nlscale)$ as $x^\ast:=\gamma(t^\ast)$ where
\begin{align*}
    t^\ast := \inf\{t>0\st \gamma(t)\in B_\domain(x,\nlscale)\}.
\end{align*}
Either $y^\ast\in B_\domain(x,\nlscale)$ and therefore $x^\ast=y^\ast$ or, similar as before, it holds $d_\domain(x,x^\ast)=\nlscale$ and
\begin{align*}         
    d_\domain(y^\ast,x^\ast)
    =
    d_\domain(y^\ast,x) 
    -
    d_\domain(x^\ast,x)
    \leq 
    d_\domain(x,y)
    +
    d_\domain(y^\ast,y)
    -
    d_\domain(x^\ast,x)
    \leq d_\domain(x,y).
\end{align*}
Hence, as before we can estimate
\begin{align*}
    T_\nlscale u_n^\nlscale(x) - T_\nlscale u_n^\nlscale(y) 
    &\leq
    u_n^\nlscale(x^\ast) - u_n^\nlscale(y^\ast)
    \\&\leq
    \Lip_n(\vec{u}_n)\,
    \big(C\,(
    \underbrace{d_\domain(x^\ast,y^\ast)}_{\leq d_\domain(x,y)} 
    + 2\res_n + \phi(2\res_n)
    +\tau_\eta\gscale
    \big)
    \\&\leq
    \Lip_n(\vec{u}_n)\,C\,
    \big(
    \abs{x-y} + \phi(\abs{x-y}) + 2\res_n + \phi(2\res_n)
    +
    \tau_\eta\gscale
    \big).
\end{align*}
\end{proof}
\begin{lemma}\label{lem:ball_to_func_cont}
For $u:\closure\domain\to\R$ it holds
\begin{align}
\sup_{\closure\domain}\abs{T^\nlscale u-u} 
\vee 
\sup_{\closure\domain}\abs{T_\nlscale u-u}
\leq \Lip_\domain(u)\,\nlscale.
\end{align}
\end{lemma}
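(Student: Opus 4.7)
The proof is essentially a one-line observation from the definitions combined with the Lipschitz property. I will present it as a short, direct argument. The plan is to fix an arbitrary $x\in\closure\domain$ and to bound $u(y)-u(x)$ uniformly for $y$ in the geodesic ball $\closure B_\domain(x,\nlscale)$.

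First I would fix $x\in\closure\domain$ and $y\in\closure B_\domain(x,\nlscale)$, so that $d_\domain(x,y)\leq\nlscale$. By definition of $\Lip_\domain(u)$, this yields
\[\abs{u(y)-u(x)}\leq\Lip_\domain(u)\,d_\domain(x,y)\leq\Lip_\domain(u)\,\nlscale.\]
In particular, $u(x)-\Lip_\domain(u)\nlscale\leq u(y)\leq u(x)+\Lip_\domain(u)\nlscale$ for every $y\in\closure B_\domain(x,\nlscale)$.

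Next I would take the supremum (respectively infimum) over $y\in\closure B_\domain(x,\nlscale)$ on both sides. For $T^\nlscale u(x)=\sup_{\closure B_\domain(x,\nlscale)}u$ this gives $T^\nlscale u(x)\leq u(x)+\Lip_\domain(u)\nlscale$, and since $x\in\closure B_\domain(x,\nlscale)$ we also have $T^\nlscale u(x)\geq u(x)$, so
\[0\leq T^\nlscale u(x)-u(x)\leq\Lip_\domain(u)\,\nlscale.\]
Analogously for $T_\nlscale u(x)=\inf_{\closure B_\domain(x,\nlscale)}u$ one obtains $0\leq u(x)-T_\nlscale u(x)\leq\Lip_\domain(u)\nlscale$.

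Taking the supremum over $x\in\closure\domain$ in each of the two inequalities and combining with the maximum $\vee$ yields the claim. There is no real obstacle here: the statement is a direct consequence of the fact that $d_\domain$-Lipschitz functions have oscillation at most $\Lip_\domain(u)\nlscale$ on geodesic balls of radius $\nlscale$, which is exactly what the operators $T^\nlscale$ and $T_\nlscale$ measure. The only minor point to be careful about is that $\Lip_\domain(u)$ could be $+\infty$, in which case the bound is trivially true.
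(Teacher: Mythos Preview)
Your proof is correct and takes essentially the same approach as the paper: both arguments bound $u(y)-u(x)$ by $\Lip_\domain(u)\,d_\domain(x,y)\leq\Lip_\domain(u)\,\nlscale$ for $y\in\closure B_\domain(x,\nlscale)$ and then pass to the supremum/infimum. Your version is slightly more detailed in that you also record the trivial sign observation $T^\nlscale u(x)\geq u(x)$ (and analogously for $T_\nlscale$), which the paper leaves implicit.
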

\begin{proof}
We only give a proof of the first inequality since the one for $T_\nlscale u$ works analogously.
For $x\in\closure\domain$ one computes
\begin{align*}
    T^\nlscale u (x) - u(x)
    = 
    \max_{y\in B_\domain(x,\nlscale)} u(y) - u(x)
    \leq \Lip_\domain(u)\max_{y\in B_\domain(x,\nlscale)}d_\domain(x,y) \leq \Lip_\domain(u)\,\nlscale,\quad\forall x\in\domain^\nlscale_\constr,
\end{align*}
which implies $\sup_{\domain^\nlscale_\constr}\abs{T^\nlscale-u}\leq\Lip_\domain(u)\,\nlscale$.
\end{proof}
\begin{lemma}\label{lem:ball_to_func_disc}
Under \cref{ass:kernel,ass:geodesic_euclidean_2,ass:scaling} there \revision{exists} a constant $C>0$ such that for all $\vec{u}_n:\domain_n\to\R$ it holds
\begin{align}
\sup_{\closure\domain}\abs{u_n^\nlscale-u_n} \vee \sup_{\closure\domain}\abs{(u_n)_\nlscale-u_n}
\leq C\Lip_n(\vec u_n)\nlscale,
\end{align}
where $u_n:\closure\domain\to\R$ is the piecewise constant extension of $\vec u_n$, defined in \labelcref{eq:extension}.
\end{lemma}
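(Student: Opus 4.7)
The plan is to control both extensions by comparing each to $\vec u_n$ evaluated at the closest point projection $\pi_n(x)$, using the graph Lipschitz constant together with the cone upper bound from \cref{lem:cone_upper}. I give the argument for $u_n^\nlscale$; the case of $(u_n)_\nlscale$ is symmetric.

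Fix $x\in\closure\domain$. First I would verify that $\pi_n(x)\in\closure B_\domain(x,\nlscale)\cap\domain_n$, which ensures both $u_n^\nlscale(x)\geq u_n(x)$ and $(u_n)_\nlscale(x)\leq u_n(x)$. Indeed, $|\pi_n(x)-x|\leq\res_n\leq r_\domain$, so \cref{ass:geodesic_euclidean_2} gives
\[
d_\domain(x,\pi_n(x))\leq \res_n+\phi(\res_n)\leq \res_n(1+\sigma_\phi(\gscale))\leq \nlscale,
\]
where the last inequality follows from \cref{ass:scaling} (which forces $\res_n\leq\gscale\leq\nlscale/2$ and $\sigma_\phi(\gscale)\lesssim 1$). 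Next, let $\vec x^\ast\in\closure B_\domain(x,\nlscale)\cap\domain_n$ realize the supremum in the definition of $u_n^\nlscale(x)$, so that
\[
0\leq u_n^\nlscale(x)-u_n(x)=\vec u_n(\vec x^\ast)-\vec u_n(\pi_n(x))\leq \Lip_n(\vec u_n)\,\vec d_n(\vec x^\ast,\pi_n(x)).
\]

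Now I would invoke \cref{lem:cone_upper} (whose hypotheses are implied by \cref{ass:scaling}, as in the remark following that lemma) to convert the graph distance into the geodesic distance:
\[
\vec d_n(\vec x^\ast,\pi_n(x))\leq \left(1+\frac{4\res_n}{t_0\gscale}+\frac{2\phi(\res_n)}{t_0\gscale}\right)d_\domain(\vec x^\ast,\pi_n(x))+\tau_\eta \gscale.
\]
By the triangle inequality and the previous display,
\[
d_\domain(\vec x^\ast,\pi_n(x))\leq d_\domain(\vec x^\ast,x)+d_\domain(x,\pi_n(x))\leq \nlscale+\res_n+\phi(\res_n)\lesssim \nlscale.
\]
Under \cref{ass:scaling} the prefactor $1+\frac{4\res_n}{t_0\gscale}+\frac{2\phi(\res_n)}{t_0\gscale}$ is bounded by a universal constant, and $\tau_\eta\gscale\leq \gscale\leq\nlscale/2$. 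Combining everything yields $\vec d_n(\vec x^\ast,\pi_n(x))\lesssim \nlscale$, hence $|u_n^\nlscale(x)-u_n(x)|\leq C\Lip_n(\vec u_n)\nlscale$ for a constant $C$ depending only on the kernel and the domain. Taking the supremum over $x\in\closure\domain$ gives the bound for $u_n^\nlscale$. For $(u_n)_\nlscale$, the same argument applies after swapping the roles of $\vec x^\ast$ and $\pi_n(x)$, using the symmetry of $\vec d_n$.

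The only step that requires any care is bookkeeping the scaling assumption to guarantee simultaneously that (i) $\pi_n(x)$ lies in the geodesic ball (so $u_n^\nlscale(x)\geq u_n(x)$), (ii) the multiplicative cone constant stays bounded, and (iii) the additive $\tau_\eta\gscale$ term is absorbed into $O(\nlscale)$. All three reduce to $\res_n+\phi(\res_n)\lesssim\gscale\lesssim\nlscale$, which is exactly what \cref{ass:scaling} delivers.
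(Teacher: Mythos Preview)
Your argument is correct and follows essentially the same route as the paper: express $u_n^\nlscale(x)-u_n(x)$ as $\vec u_n(\vec x^\ast)-\vec u_n(\pi_n(x))$, bound it by $\Lip_n(\vec u_n)\,\vec d_n(\vec x^\ast,\pi_n(x))$, apply \cref{lem:cone_upper}, and absorb $\res_n+\phi(\res_n)$ and $\tau_\eta\gscale$ into $C\nlscale$ via \cref{ass:scaling}. Your extra step verifying $\pi_n(x)\in\closure B_\domain(x,\nlscale)$ to secure the sign $u_n^\nlscale(x)\geq u_n(x)$ is a detail the paper leaves implicit but is indeed needed for the absolute value.
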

\begin{proof}
Again we only prove the first estimate.
For every $x\in\domain$ it holds thanks to \cref{lem:cone_upper,ass:geodesic_euclidean_2,ass:scaling} that
\begin{align*}
u_n^\nlscale(x) - u_n(x)
&= 
\max_{\closure B_\domain(x,\nlscale)\cap\domain_n}\vec u_n - \vec u_n(\pi_n(x)) 
\leq 
\Lip_n(\vec u_n)
\max_{\vec y\in \closure B_\domain(x,\nlscale)\cap\domain_n} 
\vec d_n(\pi_n(x),\vec y)
\\
&\leq
\Lip_n(\vec u_n)
\max_{\vec y\in \closure B_\domain(x,\nlscale)\cap\domain_n} 
\left(
C d_\domain(\pi_n(x),\vec y) + \tau_\eta\gscale 
\right)
\\
&\leq
\Lip_n(\vec u_n)
\max_{\vec y\in \closure B_\domain(x,\nlscale)\cap\domain_n} 
\left(
C
(d_\domain(x,\vec y) + d_\domain(x,\pi_n(x))+ \tau_\eta \gscale 
\right)
\\
&\leq
\Lip_n(\vec u_n)
\left(
C
(\nlscale + \res_n + \phi(\res_n)) + \tau_\eta \gscale 
\right)
\\
&
\leq C\Lip_n(\vec u_n)\,\nlscale,
\end{align*}
where the constant $C>0$ changed its value.
\end{proof}
\begin{lemma}\label{lem:bdry_term}
Under \cref{ass:kernel,ass:geodesic_euclidean_2,ass:scaling,ass:data} there exists a constant $C>0$ such that for all $\vec{u}_n:\domain_n\to\R$ with $\vec u_n=\vec g_n$ on $\constr_n$ and all $u:\closure\domain\to\R$ with $u=g$ on $\constr$ it holds
\begin{align*}
\sup_{\domain^\nlscale_\constr\setminus\domain^{2\nlscale}_\constr} (u_n^\nlscale - T_\nlscale u) \leq
C(\Lip_n(\vec u_n)+\Lip_\domain(u))\nlscale.
\end{align*}
\end{lemma}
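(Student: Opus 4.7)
The plan is to exploit that points $x \in \domain^\nlscale_\constr \setminus \domain^{2\nlscale}_\constr$ are within geodesic distance $2\nlscale$ of the constraint set $\constr$, and that both $\vec u_n$ and $u$ agree with labelling data there (up to the quantitative error in \cref{ass:data}). First, I would fix such an $x$ and pick $z \in \constr$ (or an $\eps$-minimizer, then let $\eps \to 0$) with $d_\domain(x,z) \leq 2\nlscale$. The corresponding discrete label point will be $\vec z_n := \pi_{\constr_n}(z) \in \constr_n$, which satisfies $\abs{\vec z_n - z} \leq \res_n$ by \labelcref{eq:graph_res_closest_point}, and hence $d_\domain(\vec z_n, z) \leq \res_n + \phi(\res_n)$ via \cref{ass:geodesic_euclidean_2}.

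Next I would estimate $T_\nlscale u(x)$ from above and $u_n^\nlscale(x)$ from above in terms of the label values at $z$. For $T_\nlscale u(x)$, parameterizing a geodesic $\gamma$ from $x$ to $z$ and letting $\tilde z := \gamma(t^\ast)$ where $t^\ast$ is chosen so that $d_\domain(x,\tilde z) = \nlscale \wedge d_\domain(x,z)$, we get $\tilde z \in \closure B_\domain(x,\nlscale)$ and $d_\domain(\tilde z, z) \leq \nlscale$. Consequently,
\[
T_\nlscale u(x) \leq u(\tilde z) \leq u(z) + \Lip_\domain(u)\,\nlscale = g(z) + \Lip_\domain(u)\,\nlscale.
\]
For $u_n^\nlscale(x)$, pick $\vec y^\ast \in \closure B_\domain(x,\nlscale) \cap \domain_n$ realizing the supremum. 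Using $\vec u_n(\vec z_n) = \vec g_n(\vec z_n)$ and the graph Lipschitz bound,
\[
u_n^\nlscale(x) = \vec u_n(\vec y^\ast) \leq \vec g_n(\vec z_n) + \Lip_n(\vec u_n)\, \vec d_n(\vec y^\ast, \vec z_n).
\]
By the triangle inequality together with $d_\domain(\vec z_n, z) \leq \res_n+\phi(\res_n)$,
\[
d_\domain(\vec y^\ast, \vec z_n) \leq d_\domain(\vec y^\ast, x) + d_\domain(x, z) + d_\domain(z, \vec z_n) \leq 3\nlscale + \res_n + \phi(\res_n),
\]
so \cref{lem:cone_upper} (and the scaling in \cref{ass:scaling}, which forces $\res_n \leq \gscale \leq \nlscale$ and keeps $\phi(\res_n)$ controlled) yields $\vec d_n(\vec y^\ast, \vec z_n) \lesssim \nlscale$.

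Combining the two inequalities and applying \cref{ass:data} to control $\vec g_n(\vec z_n) - g(z) \leq C\res_n$, one arrives at
\[
u_n^\nlscale(x) - T_\nlscale u(x) \leq C\res_n + C\Lip_n(\vec u_n)\,\nlscale + \Lip_\domain(u)\,\nlscale.
\]
Since $\res_n \leq \nlscale$ and the Lipschitz constants are bounded by \cref{ass:data}, the additive $C\res_n$ can be absorbed into $C(\Lip_n(\vec u_n) + \Lip_\domain(u))\nlscale$ (enlarging $C$ to depend on the data). Taking supremum over $x$ gives the claim.

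The main obstacle is that $x$ is generally farther than $\nlscale$ from $\constr$, so the labelled point $z$ does not lie in $\closure B_\domain(x,\nlscale)$ and cannot be used directly to estimate $T_\nlscale u(x)$. The geodesic-midpoint trick (constructing $\tilde z$ on a length-space geodesic between $x$ and $z$) circumvents this, and analogous geodesic-based constructions have already been used in the proofs of \cref{lem:lipschitz_NLInfL,lem:ball_to_func_disc}. A secondary subtlety is making sure that after invoking the cone upper bound of \cref{lem:cone_upper}, all occurring error terms ($\res_n$, $\phi(\res_n)$, $\tau_\eta\gscale$) remain dominated by $\nlscale$ under the scaling hypotheses, which is precisely what \cref{ass:scaling} is designed to guarantee.
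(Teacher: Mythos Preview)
Your overall strategy is sound and close in spirit to the paper's, but the bound on $T_\nlscale u(x)$ goes the wrong way. You write
\[
T_\nlscale u(x) \leq u(\tilde z) \leq g(z) + \Lip_\domain(u)\,\nlscale,
\]
which is an \emph{upper} bound on $T_\nlscale u(x)$. To control $u_n^\nlscale(x)-T_\nlscale u(x)$ from above you need a \emph{lower} bound on $T_\nlscale u(x)$. The geodesic--midpoint construction is unnecessary here: since $T_\nlscale$ is an infimum, any $y\in\closure B_\domain(x,\nlscale)$ satisfies $d_\domain(y,z)\leq d_\domain(y,x)+d_\domain(x,z)\leq 3\nlscale$, hence $u(y)\geq g(z)-3\Lip_\domain(u)\,\nlscale$ and therefore $T_\nlscale u(x)\geq g(z)-3\Lip_\domain(u)\,\nlscale$. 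With this correction the rest of your argument goes through. A minor related point: absorbing the additive $C\res_n$ from \cref{ass:data} into $C(\Lip_n(\vec u_n)+\Lip_\domain(u))\nlscale$ tacitly uses that the Lipschitz constants are bounded away from zero (or that $C$ may depend on the data); the paper makes the same silent absorption, so this is not a discrepancy.

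For comparison, the paper proceeds slightly less directly: it inserts the piecewise constant extension $u_n$ and splits
\[
u_n^\nlscale - T_\nlscale u \leq \abs{u_n^\nlscale - u_n} + \abs{u_n - u} + \abs{u - T_\nlscale u},
\]
bounds the first and last terms by \cref{lem:ball_to_func_disc,lem:ball_to_func_cont}, and then estimates the middle term $\abs{u_n(x)-u(x)}$ by routing through $\vec z_n=\pi_{\constr_n}(z)$ exactly as you do. Your route avoids the detour through $u_n$ and the two auxiliary lemmas, at the cost of redoing their content inline; once the sign issue is fixed, both arguments are essentially equivalent.
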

\begin{proof}
Using \cref{lem:ball_to_func_cont,lem:ball_to_func_disc} we have that
\begin{align*}
\sup_{\domain^\nlscale_\constr\setminus\domain^{2\nlscale}_\constr} \left(u_n^\nlscale - T_\nlscale u\right) &\leq
\sup_{\domain^\nlscale_\constr\setminus\domain^{2\nlscale}_\constr}
\big(\abs{u_n^\nlscale - u_n} + 
\abs{u_n - u} +
\abs{u - T_\nlscale u}
\big)\\
&\leq
(\Lip_n(\vec u_n) + \Lip_\domain(u)) C \nlscale + 
\sup_{\domain^\nlscale_\constr\setminus\domain^{2\nlscale}_\constr}
\abs{u_n - u}.
\end{align*}
Since \revision{$x\in \domain^\nlscale_\constr\setminus\domain^{2\nlscale}_\constr$} we know that there exists 
$z\in \constr$ such that $\abs{z  - x}\leq 2\nlscale$, for which we also find a vertex $\pi_{\constr_n}(z)\in\constr_n$ with $\abs{z - \pi_{\constr_n}(z)}\leq \res_n$ and therefore $\abs{x - \pi_{\constr_n}(z)}\leq 2\nlscale+\res_n.$
This also implies that $\abs{\pi_n(x) - \pi_{\constr_n}(z)}\leq 2\nlscale+2\res_n$.
With similar computations as before, using \cref{ass:scaling,lem:cone_upper,lem:ball_to_func_cont,lem:ball_to_func_disc,ass:data}, we have
\begin{align*}
\abs{u_n(x) - u(x)} 
&\leq 
\abs{\vec u_n(\pi_n(x)) - \vec g_n(\pi_{\constr_n}(z))} +
\abs{\vec g_n(\pi_{\constr_n}(z)) - g(z)} +
\abs{g(z) - u(x)}
\\
& = 
\abs{\vec u_n(\pi_n(x)) - \vec u_n(\pi_{\constr_n}(z))} +
\abs{\vec g_n(\pi_{\constr_n}(z)) - g(z)}
+
\abs{u(z) - u(x)}
\\
&\leq
\Lip_n(\vec u_n)\vec d_n(\pi_n(x),\pi_{\constr_n}(z)) 
+
\abs{\vec g_n(\pi_{\constr_n}(z)) - g(z)}
+
\Lip_\domain(u)d_\domain(z,x)
\\
&\leq
C(\Lip_n(\vec u_n)+\Lip_\domain(u))\nlscale.
\end{align*}
\end{proof}
\begin{lemma}\label{lem:lipschitz_un}
Under \cref{ass:kernel,ass:geodesic_euclidean_2,ass:scaling} there exists a constant $C>0$ such that for all $\vec u:\domain_n\to\R$ it holds
\begin{align*}
    \abs{u_n(x) - u_n(y)}\leq
    \Lip_n(\vec u_n)
    \left(
    C\big(d_\domain(x,y)+2\res_n+2\phi(\res_n)\big)+
    \tau_\eta \gscale\right),\quad\forall x,y\in\closure\domain,
\end{align*}
where $u_n:\closure\domain\to\R$ is the piecewise constant extension of $\vec u_n$, defined in \labelcref{eq:extension}.
\end{lemma}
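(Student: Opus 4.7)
The plan is to unwind the definition of $u_n$ and chain together three ingredients: (i) the graph Lipschitz estimate for $\vec u_n$, (ii) the graph-to-geodesic distance bound from \cref{lem:cone_upper}, and (iii) the geodesic-to-Euclidean control given by \cref{ass:geodesic_euclidean_2} applied to the closest-point projection.

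First I would observe that by the definition \labelcref{eq:extension} of the piecewise constant extension we have, for arbitrary $x,y \in \closure\domain$,
\begin{align*}
    \abs{u_n(x)-u_n(y)} = \abs{\vec u_n(\pi_n(x)) - \vec u_n(\pi_n(y))} \leq \Lip_n(\vec u_n)\,\vec d_n(\pi_n(x),\pi_n(y)).
\end{align*}
Next I would invoke \cref{lem:cone_upper}, whose applicability is guaranteed by \cref{ass:scaling} (which in particular ensures $\gscale \leq r_\domain$ and \labelcref{eq:delta_upper}), to obtain a constant $C>0$ such that
\begin{align*}
    \vec d_n(\pi_n(x),\pi_n(y)) \leq C\, d_\domain(\pi_n(x),\pi_n(y)) + \tau_\eta\, \gscale.
\end{align*}

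It remains to bound $d_\domain(\pi_n(x),\pi_n(y))$ in terms of $d_\domain(x,y)$ plus error terms arising from the projection. By the triangle inequality for the geodesic distance,
\begin{align*}
    d_\domain(\pi_n(x),\pi_n(y)) \leq d_\domain(\pi_n(x),x) + d_\domain(x,y) + d_\domain(y,\pi_n(y)).
\end{align*}
Since $\abs{x - \pi_n(x)} \leq \res_n \leq \gscale \leq r_\domain$ (using \cref{ass:scaling} and the bound implied by \labelcref{eq:delta_upper}), \cref{ass:geodesic_euclidean_2} yields
\begin{align*}
    d_\domain(\pi_n(x),x) \leq \res_n + \phi(\res_n),
\end{align*}
and the same estimate holds at $y$. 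Combining the three displays gives
\begin{align*}
    \abs{u_n(x)-u_n(y)} \leq \Lip_n(\vec u_n)\left( C\bigl(d_\domain(x,y) + 2\res_n + 2\phi(\res_n)\bigr) + \tau_\eta \gscale\right),
\end{align*}
which is precisely the desired estimate.

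There is no real obstacle here; the statement is essentially a bookkeeping step that packages the cone estimate of \cref{lem:cone_upper} together with the projection error. The only point that warrants a moment of care is verifying that the hypothesis $\abs{x-\pi_n(x)}\leq r_\domain$ of \cref{ass:geodesic_euclidean_2} is satisfied, which follows because \cref{ass:scaling} forces $\res_n$ to be no larger than $\gscale$, and $\gscale \leq r_\domain$ by the same assumption.
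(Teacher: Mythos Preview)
Your proof is correct and follows exactly the same approach as the paper: unwind the definition of $u_n$, apply the graph Lipschitz bound, use \cref{lem:cone_upper} to pass from $\vec d_n$ to $d_\domain$, and then the triangle inequality together with \cref{ass:geodesic_euclidean_2} to replace $d_\domain(\pi_n(x),\pi_n(y))$ by $d_\domain(x,y)+2\res_n+2\phi(\res_n)$. If anything, you are slightly more careful than the paper in checking that \cref{ass:scaling} guarantees the hypotheses of \cref{lem:cone_upper} and \cref{ass:geodesic_euclidean_2}.
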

\begin{proof}
Using \cref{lem:cone_upper,ass:geodesic_euclidean_2,ass:scaling} and the definition of $u_n$ in \labelcref{eq:extension} it holds
\begin{align*}
    \abs{u_n(x)-u_n(y)} 
    &= 
    \abs{\vec u_n(\pi_n(x)) - \vec u_n(\pi_n(y))} 
    \\
    &\leq 
    \Lip_n(\vec u_n) \vec d_n(\pi_n(x),\pi_n(y))
    \\
    &\leq
    \Lip_n(\vec u_n)
    \left(
    C\,d_\domain(\pi_n(x),\pi_n(y))+
    \tau_\eta \gscale\right)
    \\
    &\leq
    \Lip_n(\vec u_n)
    \left(
    C\,\big(d_\domain(\pi_n(x),x)+d_\domain(x,y)+d_\domain(y,\pi_n(y)\big)+
    \tau_\eta \gscale\right)
    \\
    &\leq
    \Lip_n(\vec u_n)
    \left(
    C\,\big(d_\domain(x,y)+2\res_n+2\phi(\res_n)\big)+
    \tau_\eta \gscale\right).
\end{align*}
\end{proof}
%
%
\subsubsection{Discrete-to-Nonlocal Consistency}

In this section we will prove that if $\vec u_n$ is a solution of the graph infinity Laplace equation \labelcref{eq:discrete_problem} then a discrete-to-continuum version of the max-ball statement \cref{lem:nonloc2loc} holds true.
Using only the comparison with graph distance functions, established in \cref{thm:comp_graph_cones}, we shall prove
\begin{align*}
    -\Delta_\infty^\nlscale 
    u_n^\nlscale
    \leq 
    C(\res_n,\gscale,\nlscale)
    \quad\text{and}\quad
    -\Delta_\infty^\nlscale (u_n)_\nlscale
    \geq -C(\res_n,\gscale,\nlscale)
    ,
\end{align*}
where $C(\res_n,\gscale,\nlscale)>0$ is small if $\res_n$ and $\gscale$ are small.
This means that the extension operators \labelcref{eq:discr_extension} turn discrete solutions into approximate sub- and supersolutions of the nonlocal equation $-\Delta_\infty^\nlscale u=0$.
Compared to the continuum setting from \cref{lem:nonloc2loc}, the inhomogeneity $C(\res_n,\gscale,\nlscale)$ originates from the discretization error.
Using the perturbation results from \cref{sec:max_ball_perturb} we can then derive uniform convergence rates of $\vec u_n$ to an AMLE \labelcref{eq:AMLE}.

In the proof we will first show the desired statement for all graph vertices and then use an approximate Lipschitz property to extend it to the whole continuum domain.
\begin{theorem}\label{thm:discrete_NL}
Assume that \cref{ass:kernel,ass:geodesic_euclidean_2,ass:scaling} hold.
Let $\vec u_n:\domain_n\to\R$ solve the graph infinity Laplacian equation \labelcref{eq:discrete_problem}.
Then there exists a constant $C>0$ such that for all $x_0\in\domain_\constr^{2\nlscale+3\res_n}$ it holds
\begin{subequations}\label{ineq:consist_bounds}
\begin{align}
    \label{ineq:bound_subharmonic}
    -\Delta_\infty^\nlscale u_n^\nlscale(x_0) 
    &\leq 
    \Lip_n(\vec g_n)C\left(\frac{\res_n}{\gscale\nlscale} + \frac{\gscale}{\nlscale^2}
    +
    \frac{\phi(\gscale)}{\gscale\nlscale}
    \right),
    \\
    \label{ineq:bound_superharmonic}
    -\Delta_\infty^\nlscale (u_n)_\nlscale(x_0) 
    &\geq
    \revision{-}
    \Lip_n(\vec g_n)C\left(\frac{\res_n}{\gscale\nlscale} + \frac{\gscale}{\nlscale^2}
    +
    \frac{\phi(\gscale)}{\gscale\nlscale}
    \right).
\end{align}
\end{subequations}
\end{theorem}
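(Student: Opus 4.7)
The plan is to prove \labelcref{ineq:bound_subharmonic} directly; the reverse inequality \labelcref{ineq:bound_superharmonic} will then follow by symmetry, since $\L^G_\infty(-\vec u_n)=-\L^G_\infty\vec u_n$ implies that $-\vec u_n$ also solves \labelcref{eq:discrete_problem}, while $(-\vec u_n)_n^\nlscale=-(u_n)_\nlscale$ and $\Delta_\infty^\nlscale(-v)=-\Delta_\infty^\nlscale v$. So fix $x_0\in\domain_\constr^{2\nlscale+3\res_n}$ and start from the identity
\begin{equation*}
-\nlscale^2\Delta_\infty^\nlscale u_n^\nlscale(x_0) = 2u_n^\nlscale(x_0) - T^\nlscale u_n^\nlscale(x_0) - T_\nlscale u_n^\nlscale(x_0).
\end{equation*}
I would mimic the structure of the continuum max-ball argument in \cref{lem:nonloc2loc}. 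First, using that $(\closure\domain,d_\domain)$ is a length space, one checks that $T^\nlscale u_n^\nlscale(x_0)=u_n^{2\nlscale}(x_0)=\vec u_n(z_0)$ for some $z_0\in\closure B_\domain(x_0,2\nlscale)\cap\domain_n$, and selects $y_0\in\closure B_\domain(x_0,\nlscale)\cap\domain_n$ with $u_n^\nlscale(x_0)=\vec u_n(y_0)$.

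The next step is to produce the lower bound
\begin{equation*}
T_\nlscale u_n^\nlscale(x_0) \geq \vec u_n(x_0^\ast) - C\,\Lip_n(\vec u_n)\bigl(\res_n + \phi(\res_n) + \gscale\bigr),\qquad x_0^\ast := \pi_n(x_0),
\end{equation*}
which is the discrete analogue of the bound $T_\nlscale T^\nlscale u(x_0)\geq u(x_0)$ used in \cref{lem:nonloc2loc}. For any $x\in\closure B_\domain(x_0,\nlscale)$, walking a controlled distance $\nlscale-\res_n-\phi(\res_n)$ along a geodesic from $x$ towards $x_0$ and then projecting onto $\domain_n$ produces a vertex in $\closure B_\domain(x,\nlscale)\cap\domain_n$ at geodesic distance $\lesssim \res_n+\phi(\res_n)$ from $x_0$, and comparing $\vec u_n$ at this vertex with $\vec u_n(x_0^\ast)$ via \cref{lem:cone_upper} and the graph Lipschitz constant gives the claim.

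The core of the argument is then the estimate
\begin{equation*}
2\vec u_n(y_0) - \vec u_n(z_0) - \vec u_n(x_0^\ast)\leq C\,\Lip_n(\vec u_n)\left(\frac{\nlscale\res_n}{\gscale} + \frac{\nlscale\phi(\gscale)}{\gscale} + \gscale\right).
\end{equation*}
This I would derive by applying the discrete comparison with graph distance functions, \cref{thm:comp_graph_cones}, on the vertex set
\begin{equation*}
X' := \bigl\{\vec x \in \domain_n\setminus\{x_0^\ast\} \st d_\domain(\vec x, x_0) < R\bigr\},\qquad R := 2\nlscale - \gscale - \phi(\gscale),
\end{equation*}
with base point $z=x_0^\ast$ and slope $a = (\vec u_n(z_0)-\vec u_n(x_0^\ast))/L$, where $L$ denotes the graph-distance lower bound on $\vec d_n(\cdot,x_0^\ast)$ over $\partial X'\setminus\{x_0^\ast\}$ provided by \cref{lem:cone_lower}. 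The choice of $R$ guarantees $\partial X'\setminus\{x_0^\ast\}\subset\closure B_\domain(x_0,2\nlscale)\cap\domain_n$, so the sup-definition of $z_0$ controls $\vec u_n$ on that part of the boundary; meanwhile $x_0^\ast\in\partial X'$, giving the other boundary value $\vec u_n(x_0^\ast)$. The hypothesis $x_0\in\domain_\constr^{2\nlscale+3\res_n}$, combined with $R\leq 2\nlscale$, ensures $X'\cap\constr_n=\emptyset$ so that \cref{thm:comp_graph_cones} applies. Evaluating the resulting inequality at $y_0\in X'$ and estimating $\vec d_n(y_0,x_0^\ast)$ via \cref{lem:cone_upper} yields the displayed bound once the leading-order terms are collected.

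Combining these two bounds, dividing by $\nlscale^2$, using \cref{prop:lipschitz_discr} to replace $\Lip_n(\vec u_n)$ by $\Lip_n(\vec g_n)$, and absorbing the intermediate terms $\res_n/\nlscale^2$, $\phi(\res_n)/\nlscale^2$, $\phi(\gscale)/\nlscale^2$ into $\gscale/\nlscale^2$ and $\phi(\gscale)/(\gscale\nlscale)$ (via $\res_n\lesssim\gscale$ and $\phi(\res_n)\leq\sigma_\phi(\gscale)\res_n\leq\res_n$ from \cref{ass:scaling,ass:geodesic_euclidean_2}) produces \labelcref{ineq:bound_subharmonic}. The hardest step is the calibration of $X'$ and $a$: the cone needs to be flat enough that $\vec u_n(x_0^\ast)$ dominates at $x_0^\ast\in\partial X'$ and simultaneously steep enough that $\vec u_n(z_0)-aL$ dominates on $\partial X'\setminus\{x_0^\ast\}$, and the precise gap between the graph distance $\vec d_n$ and the geodesic distance $d_\domain$, quantified by \cref{lem:cone_lower,lem:cone_upper}, is exactly what generates the three error terms $\res_n/(\gscale\nlscale)$, $\gscale/\nlscale^2$, and $\phi(\gscale)/(\gscale\nlscale)$ that appear in the statement.
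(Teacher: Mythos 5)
Your proposal is correct and reaches the stated bound, but it takes two genuinely different technical routes compared with the paper's proof, and it is worth noting what each buys. First, the paper proves the inequality only for graph vertices $\vec x_0\in\domain_\constr^{2\nlscale+2\res_n}\cap\domain_n$, where the lower bound $T_\nlscale u_n^\nlscale(\vec x_0)\geq\vec u_n(\vec x_0)$ holds \emph{exactly} (because $\vec x_0\in\closure B_\domain(y,\nlscale)\cap\domain_n$ for every $y\in\closure B_\domain(\vec x_0,\nlscale)$), and only afterwards extends to arbitrary $x_0$ via the approximate Lipschitz estimate in \cref{lem:lipschitz_NLInfL}, which is where the extra $\res_n$ in $\domain_\constr^{2\nlscale+3\res_n}$ comes from. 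You instead work directly with $x_0^\ast=\pi_n(x_0)$ from the start, which avoids invoking \cref{lem:lipschitz_NLInfL} but forces you to prove the approximate version $T_\nlscale u_n^\nlscale(x_0)\geq\vec u_n(x_0^\ast)-C\Lip_n(\vec u_n)(\res_n+\phi(\res_n)+\gscale)$; your geodesic-walk-then-project argument is sound, and the resulting extra error is absorbed after division by $\nlscale^2$ since $\res_n\lesssim\gscale\lesssim\nlscale$ under \cref{ass:scaling}. Second, the paper defines its cone-comparison vertex set $B$ by a \emph{graph-distance} constraint $\vec d_n(\vec x_0,\cdot)\leq 2\nlscale(1-\phi(\gscale)/\gscale)-\gscale$, which has the advantage that the slope in the discrete cone inequality is exactly (sup value minus base value) over the graph-distance radius, with no need to lower-bound $\vec d_n$ at the boundary. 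You define $X'$ by a \emph{geodesic-distance} constraint $d_\domain(\cdot,x_0)<R$ and then use \cref{lem:cone_lower} to bound $\vec d_n\geq(1-\phi(\gscale)/\gscale)\,d_\domain$ at $\partial X'\setminus\{x_0^\ast\}$, producing $L$; this requires one extra invocation of \cref{lem:cone_lower} but is otherwise equivalent and generates the same three error terms $\res_n/(\gscale\nlscale)$, $\gscale/\nlscale^2$, and $\phi(\gscale)/(\gscale\nlscale)$. One small imprecision: you do not actually need $x_0^\ast\in\partial X'$ for the argument to close (it need not be a graph neighbor of $X'$); it suffices that the max of $\vec u_n-a\,\vec d_n(\cdot,x_0^\ast)$ over $\partial X'$ is at most $\vec u_n(x_0^\ast)$, which your slope choice guarantees on $\partial X'\setminus\{x_0^\ast\}$ regardless. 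With that caveat the proposal is sound.
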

\begin{proof}
We only proof the first inequality \labelcref{ineq:bound_subharmonic}.
The second one can be established by applying the first one to $-\vec u_n$ and $-\vec g_n$.
Our proof strategy consists in proving the desired inequality for points $\vec x_0\in\domain_\constr^{2\nlscale+2\res_n}\cap\domain_n$ and then extending this to the whole domain by the help of \cref{lem:lipschitz_NLInfL} which gives another $\res_n$ contribution and leads to $\domain_\constr^{2\nlscale+3\res_n}$.

By definition of the nonlocal infinity Laplacian \labelcref{eq:nonlocal_lapl} it holds
\begin{align*}
    -\nlscale^2\Delta^\nlscale_\infty u_n^\nlscale(\vec x_0) = 
    2u_n^\nlscale(\vec x_0) - \sup_{y\in\closure B_\domain(\vec x_0,\nlscale)} u_n^\nlscale(y)
    - \inf_{y\in\closure B_\domain(\vec x_0,\nlscale)} u_n^\nlscale(y).
\end{align*}
For the $\sup$ term we have
\begin{align*}
\sup_{y\in\closure B_\domain(\vec x_0,\nlscale)} u_n^\nlscale(y) = 
\sup_{y\in\closure B_\domain(\vec x_0,\nlscale)}
\;\,
\sup_{\closure B_\domain(y,\nlscale)\cap\domain_n} \vec u_n
=
\sup_{\closure B_\domain(\vec x_0,2\nlscale)\cap\domain_n} \vec u_n
= u_n^{2\nlscale}(\vec x_0).
\end{align*}
For the inf term, using the fact that $u_n^\nlscale$ is a simple function, we find $y^\ast\in \closure B_\domain(\vec x_0,\nlscale)$ such that
\begin{align*}
\inf_{y\in\closure B_\domain(\vec x_0,\nlscale)} u_n^\nlscale(y) 
&= 
u_n^\nlscale(y^\ast)
= 
\sup_{\closure B_\domain(y^\ast,\nlscale)\cap\domain_n} \vec u_n 
\geq \vec u_n(x_0)
\end{align*}
and therefore
\begin{align*}
-\nlscale^2\Delta^\nlscale_\infty u_n^\nlscale(\vec x_0)
\leq 2 u_n^\nlscale (\vec x_0) - u_n^{2\nlscale}(\vec x_0) - 
\vec{u}_n (\vec x_0).
\end{align*}
Denoting by $\vec{p}_n^\nlscale(\vec x_0)\in\closure B_\domain(\vec x_0,\nlscale) \cap\domain_n$ and $\vec{p}_n^{2\nlscale}(\vec x_0)\in \closure B_\domain(\vec x_0,2\nlscale) \cap\domain_n$ two vertices such that
\begin{align*}
u_n^\nlscale(\vec x_0)
&=
\sup_{\closure B_\domain(\vec x_0,\nlscale) \cap \domain_n} \vec{u}_n 
= 
\vec u_n (\vec{p}_n^\nlscale(\vec x_0)),
\\
u_n^{2\nlscale}(\vec x_0)
&=
\sup_{\closure B_\domain(\vec x_0,2\nlscale) \cap \domain_n} \vec{u}_n 
= 
\vec u_n (\vec{p}_n^{2\nlscale}(\vec x_0)),
\end{align*}
we rewrite this inequality to 
\begin{align}\label{ineq:estimate_nonl_inf_lapl}
-\nlscale^2\Delta^\nlscale_\infty u_n^\nlscale(\vec x_0)
\leq 2\vec{u}_n(\vec{p}_n^\nlscale(\vec x_0)) - 
\vec{u}_n(\vec{p}_n^{2\nlscale}(\vec x_0)) - 
\vec{u}_n (\vec x_0).
\end{align}
In order to estimate this term we have to use the comparison with cones property, which $\vec u_n$ satisfies thanks to \cref{thm:comp_graph_cones}.
To this end we define the following subset of vertices
\begin{align*}
    B:=\left\{\vec w\in\domain_n\setminus\{\vec x_0\}\st \vec d_n(\vec x_0,\vec w)\leq 2\nlscale\left(1-\frac{\phi(\gscale)}{\gscale}\right) - \gscale\right\}.
\end{align*}
First note that $B$ is non-empty.
To see this, we know that by definition of the graph resolution $\res_n$ in \labelcref{eq:graph_res} there exists $\vec y\in\domain_n\setminus\{\vec x_0\}$ such that $\abs{\vec x_0-\vec y}\leq 2\res_n$.
\revision{Thanks to \cref{ass:scaling}} it trivially holds $2\res_n/\gscale\leq t_0$ and also $\phi(\gscale)/\gscale\leq 1/2$ and $(1+t_0)\frac{\gscale}{\nlscale}<1$, which is why we obtain
\begin{align*}
    \vec d_n(\vec x_0,\vec y) 
    &= 
    \frac{\sigma_\eta}{\eta_h(\abs{\vec x_0-\vec y})} 
    \leq \frac{\sigma_\eta\gscale}{\eta(2\res_n/\gscale)}
    \leq t_0\gscale
    \\
    &=
    2\nlscale\left(1-\frac{\phi(\gscale)}{\gscale}\right)-\gscale + t_0\gscale - 2\nlscale\left(1-\frac{\phi(\gscale)}{\gscale}\right) + \gscale
    \\
    &=
    2\nlscale\left(1-\frac{\phi(\gscale)}{\gscale}\right)-\gscale +
    \nlscale
    \left(
    (1+t_0)\frac{\gscale}{\nlscale} - 2 + 2\frac{\phi(\gscale)}{\gscale}
    \right)
    \\
    &\leq
    2\nlscale\left(1-\frac{\phi(\gscale)}{\gscale}\right)-\gscale,
\end{align*}
meaning that $\vec y\in B$.
We claim that the graph boundary of $B$ satisfies
\begin{align*}
    \partial B \subset \left\{\vec w\in\domain_n\st
    2\nlscale\left(1-\frac{\phi(\gscale)}{\gscale}\right) - \gscale
    < 
    \vec d_n(\vec x_0,\vec w)
    \;\;\text{and}
    \;\;
    d_\domain(\vec x_0,\vec w)
    \leq 
    2\nlscale
    \right\}
    \cup
    \{\vec x_0\}
    =:B'.
\end{align*}
By definition of the graph boundary \labelcref{eq:graph_boundary} it holds $\vec w\in\partial B$ if and only if $\vec w\notin B$ and there exists $\vec y\in B$ such that $\eta_h(\abs{\vec w-\vec y})>0$.
Hence, the only non-trivial property to check is the inequality $d_\domain(\vec x_0,\vec w)\leq 2\nlscale$. 
For this, we observe that $\vec y\in B$ as above satisfies $\abs{\vec w-\vec y}\leq \gscale$ and hence, using \cref{lem:cone_lower,ass:geodesic_euclidean_2}, we get
\begin{align*}
    d_\domain(\vec x_0,\vec w) 
    &\leq 
    d_\domain(\vec x_0,\vec y)
    +
    d_\domain(\vec y,\vec w)
    \leq
    \frac{\vec d_n(\vec x_0,\vec y)}{1-\frac{\phi(\gscale)}{\gscale}}
    +
    \abs{\vec y-\vec w}+\phi(\abs{\vec y-\vec w})
    \\
    &\leq
    2\nlscale - \frac{\gscale}{1-\frac{\phi(\gscale)}{\gscale}} 
    +
    \gscale + \phi(\gscale)
    =
    2\nlscale
    +
    \gscale\left(1+\frac{\phi(\gscale)}{\gscale}-\frac{1}{1-\frac{\phi(\gscale)}{\gscale}}\right)
    \leq 
    2\nlscale
\end{align*}
using that $1+x-\frac{1}{1-x}\leq 0$ for all $0\leq x<1$ and $\frac{\phi(\gscale)}{\gscale}\leq\frac{1}{2}<1$ by \cref{ass:scaling}.
With this we have established that $\partial B\subset B'$.
Next we claim that any vertex $\vec w\in B'$ satisfies the inequality
\begin{align}\label{ineq:comparison_with_cone}
\vec{u}_n(\vec w) \leq 
\vec{u}_n(\vec{x}_0) + 
\frac{\vec{u}_n(\vec p_n^{2\nlscale}(\vec x_0)) - \vec{u}_n(\vec{x}_0)}{2\nlscale\left(1-\frac{\phi(\gscale)}{\gscale}\right) - \gscale} 
\vec d_n(\vec x_0, \vec w).
\end{align}
On one hand, if $\vec w=\vec x_0$ then the inequality is trivially fulfilled.
On the other hand, for all other $\vec w\in B'$ by definition of $\vec p_n^{2\nlscale}(\vec x_0)$ it holds
\begin{align*}
    \vec u_n(\vec w) \leq \sup_{\closure B_\domain(\vec x_0,2\nlscale)\cap\domain_n}\vec u_n = \vec u_n(\vec p_n^{2\nlscale}(\vec x_0)),
    \quad\text{and}\quad
    \vec u_n (\vec x_0) \leq \vec u_n(\vec p_n^{2\nlscale}(\vec x_0)).
\end{align*}
Combining these two inequalities with the definition of $B'$ shows \labelcref{ineq:comparison_with_cone} for all $\vec w\in B'$ and in particular for all $\vec w\in\partial B$.
Before we are able to use comparison with cones we have to argue that $B'\subset\domain_n\setminus\constr_n$, i.e., $B'$ does not contain any labelled points.
Let therefore $\vec w\in B'$ be arbitrary.
Using the fact that $\dist_\domain(\vec x_0,\constr)\geq 2\nlscale+2\res_n$ it holds
\begin{align*}
    \dist_\domain(\vec w,\constr) \geq \dist_\domain(\vec x_0,\constr) - d_\domain(\vec x_0,\vec w) \geq 2\nlscale +2\res_n - 2\nlscale = 2\res_n.
\end{align*}
Since the graph resolution $\res_n$ is defined with respect to the Euclidean distance, we have to transfer this statement to the Euclidean distance.

Aiming for a contradiction we assume there exists $y\in\constr$ with $\abs{\vec w-y}\leq\res_n$.
By \cref{ass:geodesic_euclidean_2} it holds
\begin{align*}
    \abs{\vec w-y} \geq d_\domain(\vec w,y) - \phi(\abs{\vec w-y}) 
    \geq 
    2\res_n - \abs{\vec w-y}\frac{\phi(\abs{\vec w-y})}{\abs{\vec w-y}}.
\end{align*}
We observe that thanks to \cref{ass:scaling} it holds $\res_n\leq\gscale$
and also
\begin{align*}
    \frac{\phi(\abs{\vec w-y})}{\abs{\vec w-y}} \leq \frac{\phi(\res_n)}{\res_n} \leq
    \revision{\sigma_\phi(\gscale)}<1.
\end{align*}
Thus, we obtain
\begin{align*}
    \abs{\vec w-y} \geq 2\res_n-\abs{\vec w-y}\frac{\phi(\abs{\vec w-y})}{\abs{\vec w-y}}>2\res_n-\res_n=\res_n
\end{align*}
\revision{which is a contradiction.}
Therefore, we get that $\abs{\vec w-y}>\res_n$ for all $y\in\constr$ and, since $\constr$ is a compact set, it holds $\dist(\vec w,\constr)>\res_n$.
Next, we observe that it holds
\begin{align*}
    \dist(\vec w,\constr) 
    &= 
    \inf_{y\in\constr}\abs{\vec w-y} 
    \leq
    \inf_{y\in\constr}\left(\abs{\vec w-\vec y} + \abs{y-\vec y}\right)
    =
    \abs{\vec w-\vec y} + \inf_{y\in\constr}\abs{y-\vec y},\quad\forall\vec y\in\constr_n.
\end{align*}
Taking the infimum over $\vec y$ we get
\begin{align*}
    \dist(\vec w,\constr) 
    &\leq
    \inf_{\vec y\in\constr_n}
    \left(
    \abs{\vec w-\vec y} + \inf_{y\in\constr}\abs{y-\vec y}
    \right)
    \leq
    \inf_{\vec y\in\constr_n}
    \left(
    \abs{\vec w-\vec y} + \sup_{\vec y\in\constr_n}\inf_{y\in\constr}\abs{y-\vec y}
    \right)
    \\
    &\leq
    \dist(\vec w,\constr_n) + d_H(\constr,\constr_n).
\end{align*}
Hence, can we conclude
\begin{align*}
    \dist(\vec w,\constr_n) 
    &\geq 
    \dist(\vec w,\constr) - d_H(\constr,\constr_n) 
    > \res_n - \res_n
    = 0,
\end{align*}
which means $B'\cap\constr_n=\emptyset$ and therefore $B'\subset\domain_n\setminus\constr_n$.

Therefore, we can utilize that $\vec u_n$ satisfies comparison with cones from above (see \cref{thm:comp_graph_cones}) and infer that \labelcref{ineq:comparison_with_cone} holds for all vertices $\vec w \in\domain_n$ with $\vec d_n(\vec x_0,\vec w)\leq 2\nlscale(1-\phi(\gscale)/\gscale)-\gscale$.
We would like to choose $\vec w = \vec p_n^\nlscale(\vec x_0)$ and this choice is possible because of the following estimate.
Using \cref{lem:cone_upper} \revision{and abbreviating $\tilde C(\gscale):=\frac{4+2\sigma_\phi(\gscale)}{t_0}$} it holds
\begin{align}
    \vec d_n(\vec x_0,\vec p_n^\nlscale(\vec x_0))
    &\leq 
    \left(1 + \tilde C(\gscale) \frac{\res_n}{\gscale}\right)d_\domain(\vec x_0, \vec p^{\nlscale}(\vec x_0)) + 
    \tau_\eta\gscale 
    \notag\\
    &\leq
    \left(1 + \tilde C(\gscale) \frac{\res_n}{\gscale}\right)\nlscale + \tau_\eta\gscale 
    \notag\\
    &=
    \nlscale + \tilde C(\gscale) \frac{\res_n}{\gscale}\nlscale + \tau_\eta\gscale
    \notag
    \\
    &=
    \underbrace{2\nlscale\left(1-\frac{\phi(\gscale)}{\gscale}\right) - \gscale
    -\left(2\nlscale-2\nlscale\frac{\phi(\gscale)}{\gscale}-\gscale\right)}_{=0}
    \notag\\
    &\qquad +
    \nlscale + \tilde C(\gscale) \frac{\res_n}{\gscale}\nlscale + \tau_\eta\gscale
    \notag \\
    &= 
    2\nlscale\left(1-\frac{\phi(\gscale)}{\gscale}\right)-\gscale
    +2\nlscale\frac{\phi(\gscale)}{\gscale}
    -\nlscale 
    + (1+\tau_\eta)\gscale + \tilde C(\gscale)\frac{\res_n}{\gscale}\nlscale
    \notag\\
    &= 
    2\nlscale\left(1-\frac{\phi(\gscale)}{\gscale}\right)-\gscale
    +
    \nlscale
    \left(
    2\frac{\phi(\gscale)}{\gscale}
    -1 
    + (1+\tau_\eta)\frac{\gscale}{\nlscale} + \tilde C(\gscale)\frac{\res_n}{\gscale}\right)
    \notag\\
    \label{ineq:gdist_x0_peps}
    &\leq 2\nlscale\left(1-\frac{\phi(\gscale)}{\gscale}\right)-\gscale,
\end{align}
\revision{where we used $t_0\leq 1$, $2\leq\frac{4+2\sigma_\phi(\gscale)}{t_0}=\tilde C(\gscale)$, $\tau_\eta\leq 1$, and $\frac{\phi(\gscale)}{\gscale}\leq\sigma_\phi(\gscale)$ to obtain from \cref{ass:scaling}
\begin{align*}
    2\frac{\phi(\gscale)}{\gscale}
    -1
    +
    (1+\tau_\eta)\frac{\gscale}{\nlscale} + \tilde C(\gscale)\frac{\res_n}{\gscale}
    \leq 
    \tilde C(\gscale)\left(\sigma_\phi(\gscale) + \frac{\res_n}{\gscale}\right)-1+2\frac{\gscale}{\nlscale}
    \leq 0.
\end{align*}}
Before starting with the central estimate we use \cref{ineq:gdist_x0_peps,prop:lipschitz_discr,lem:cone_upper,ass:scaling} to compute
\begin{align}
    \abs{\vec u_n(\vec p_n^{2\nlscale}(\vec x_0)) - \vec u_n(\vec x_0)} 
    &\leq 
    \Lip_n(\vec g_n)\vec d_n(\vec p_n^{2\nlscale}(\vec x_0),\vec x_0)) 
    \notag
    \\
    &\leq 
    \Lip_n(\vec g_n)\left(\tilde C(\gscale)\,d_\domain(\vec p_n^{2\nlscale}(\vec x_0),\vec x_0) + \tau_\eta\gscale\right)
    \notag
    \\
    &\leq
    \Lip_n(\vec g_n)\left(2\tilde C(\gscale)\nlscale + \tau_\eta\gscale\right)
    \notag
    \\
    &\leq
    \Lip_n(\vec g_n)\overline C\nlscale
    \label{ineq:lipschitz_bound}
\end{align}
\revision{for a constant $\overline C>0$.}
Using \labelcref{ineq:comparison_with_cone} with $\vec w=\vec p^\nlscale(\vec x_0)$, \cref{ineq:lipschitz_bound,lem:cone_upper,ass:scaling} we can compute
\begin{align*}
2\vec{u}_n(\vec{p}_n^\nlscale(\vec x_0)) 
&\leq 
2\vec{u}_n(\vec{x}_0) + 
\frac{\vec{u}_n(\vec p_n^{2\nlscale}(\vec x_0)) - \vec{u}_n(\vec{x}_0)}{\nlscale\left(1-\frac{\phi(\gscale)}{\gscale}\right) - \gscale/2} 
\vec d_n(\vec x_0, \vec{p}_n^\nlscale(\vec x_0))\\
&\leq 
2\vec{u}_n(\vec{x}_0)+
\frac{\vec{u}_n(\vec p_n^{2\nlscale}(\vec x_0)) - \vec{u}_n(\vec{x}_0)}{\nlscale\left(1-\frac{\phi(\gscale)}{\gscale}\right) - \gscale/2} 
\left(\left(1+\tilde C(\gscale)\frac{\res_n}{\gscale}\right)\nlscale+\tau_\eta\gscale\right)\\
&= 
2\vec{u}_n(\vec{x}_0)+
\left({\vec{u}_n(\vec p_n^{2\nlscale}(\vec x_0)) - \vec{u}_n(\vec{x}_0)}\right)
\left(\left(1+\tilde C(\gscale)\frac{\res_n}{\gscale}\right)\frac{\nlscale}{\nlscale\left(1-\frac{\phi(\gscale)}{\gscale}\right) - \gscale/2}+\frac{\tau_\eta\gscale}{\nlscale\left(1-\frac{\phi(\gscale)}{\gscale}\right) - \gscale/2}\right)\\
&= 
\vec{u}_n(\vec{x}_0)+\vec{u}_n(\vec p_n^{2\nlscale}(\vec x_0))+
\\
&\qquad
\left({\vec{u}_n(\vec p_n^{2\nlscale}(\vec x_0)) - \vec{u}_n(\vec{x}_0)}\right)
\left(\tilde C(\gscale)\frac{\res_n}{\gscale}+\frac{\nlscale\frac{\phi(\gscale)}{\gscale}+\gscale/2}{\nlscale(1-\frac{\phi(\gscale)}{\gscale})-\gscale/2}\left(1+\tilde C(\gscale)\frac{\res_n}{\gscale}\right) + \frac{\tau_\eta\gscale}{\nlscale(1-\frac{\phi(\gscale)}{\gscale})-\gscale/2}\right)
\\
&\leq
\vec{u}_n(\vec{x}_0)+\vec{u}_n(\vec p_n^{2\nlscale}(\vec x_0))+ 
\Lip_n(\vec g_n)\overline C\nlscale\Bigg(
\tilde C(\gscale)\frac{\res_n}{\gscale} 
+ 
\left(\frac{\phi(\gscale)}{\gscale}
+
\frac{\gscale}{2\nlscale}
\right)
\left(
1+2\frac{\phi(\gscale)}{\gscale}
+\frac{\gscale}{\nlscale}
\right)
\left(1+\tilde C(\gscale)\frac{\res_n}{\gscale}\right)
\\
&\qquad
+ 
\tau_\eta
\frac{\gscale}{\nlscale}
\left(
1+2\frac{\phi(\gscale)}{\gscale}
+\frac{\gscale}{\nlscale}
\right)
\Bigg)
\\
&\leq
\vec{u}_n(\vec{x}_0)+\vec{u}_n(\vec p_n^{2\nlscale}(\vec x_0))+ 
\Lip_n(\vec g_n)C\nlscale\left(
\frac{\res_n}{\gscale} + 
\frac{\gscale}{\nlscale}
+
\frac{\phi(\gscale)}{\gscale}
\right)
\\
&=
\vec{u}_n(\vec{x}_0)+\vec{u}_n(\vec p_n^{2\nlscale}(\vec x_0))+ 
\Lip_n(\vec g_n)C\left(\frac{\res_n\nlscale}{\gscale}+\gscale
+\nlscale\frac{\phi(\gscale)}{\gscale}
\right),
\end{align*}
\revision{for a constant $C>0$.}
Plugging the estimate above into \labelcref{ineq:estimate_nonl_inf_lapl} and dividing by $\nlscale^2$ we get
\begin{align*}
    -\Delta_\infty^\nlscale u_n^\nlscale(\vec x_0) \leq \Lip_n(\vec g_n)C\left(\frac{\res_n}{\gscale\nlscale} + \frac{\gscale}{\nlscale^2}
    +\frac{\phi(\gscale)}{\nlscale\gscale}
    \right).
\end{align*}
\end{proof}
\begin{remark}
Almost the whole proof of \cref{thm:discrete_NL} works under the assumption that $\vec u_n$ merely satisfies comparison with cones from above / below in which case one could only prove \labelcref{ineq:bound_subharmonic} / \labelcref{ineq:bound_superharmonic}.
However, in order to have the discrete Lipschitz estimate from \cref{prop:lipschitz_discr}, we need $\vec u_n$ to satisfy both comparison properties.
\end{remark}
\begin{remark}\label{rem:singular_kernel}
We were hoping that in the case of a singular kernel, where $\tau_\eta=0$ and hence the convergence rate of graph distance functions is better according to \cref{lem:cone_upper}, the consistency error in \labelcref{ineq:consist_bounds} might be better.
Closely following the proof, one observes that almost all order $\gscale$ contributions are multiplied with $\tau_\eta$.
Only in the definition of the graph set $B$, we have to use the lower bound which features $-\gscale$.
This then leads to the term $\gscale\nlscale^{-2}$ which, however, does not contribute for small graph length scales $\gscale$.
\end{remark}

\subsubsection{Convergence Rates}

Now we are in the position to prove an important result on our way to convergence rates which can be interpreted as an approximate maximum principle.
\begin{proposition}\label{prop:approx_max_princ}
Let $u\in C(\closure\domain)$ satisfy CDF (cf. \labelcref{eq:continuum_problem}) and define $\tilde\nlscale_n:=\nlscale+\frac{3}{2}\res_n$.
Under the conditions of \cref{thm:discrete_NL} there exists a constant $C>0$ such that
\begin{subequations}
\begin{align}
    \label{ineq:cvgc_subharmonic}
    \sup_{\domain^{\tilde\nlscale_n}_\constr}(u_n^\nlscale-T_\nlscale u) 
    &\leq
    \sup_{\domain_\constr^{\tilde\nlscale_n}\setminus\domain_\constr^{2\tilde\nlscale_n}}(u_n^\nlscale-T_\nlscale u) +
    \sqrt[3]{\Lip_n(\vec g_n)C
    \left(\frac{\res_n}{\gscale\nlscale} +
    \frac{\gscale}{\nlscale^2}+
    \frac{\phi(\gscale)}{\gscale\nlscale}
    \right)},
    \\
    \label{ineq:cvgc_superharmonic}
    \sup_{\domain^{\tilde\nlscale_n}_\constr}(T^\nlscale u-(u_n)_\nlscale) 
    &\leq
    \sup_{\domain_\constr^{\tilde\nlscale_n}\setminus\domain_\constr^{2\tilde\nlscale_n}}(T^\nlscale u-(u_n)_\nlscale) + \sqrt[3]{\Lip_n(\vec g_n)C
    \left(\frac{\res_n}{\gscale\nlscale} + 
    \frac{\gscale}{\nlscale^2} +
    \frac{\phi(\gscale)}{\gscale\nlscale}
    \right)}.
\end{align}
\end{subequations}
If additionally it holds that $\alpha:=\inf_{\closure\domain}\abs{\grad u}>0$, then we even have
\begin{subequations}\label{ineq:cvgc_pos_grad}
\begin{align}
    \sup_{\domain^{\tilde\nlscale_n}_\constr}(u_n^\nlscale-T_\nlscale u) 
    &\leq
    \sup_{\domain_\constr^{\tilde\nlscale_n}\setminus\domain_\constr^{2\tilde\nlscale_n}}(u_n^\nlscale-T_\nlscale u) + 
    {\Lip_n(\vec g_n)C
    \left(
    \frac{\res_n}{\gscale\nlscale} + 
    \frac{\gscale}{\nlscale^2}+
    \frac{\phi(\gscale)}{\gscale\nlscale}
    \right)},
    \\
    \sup_{\domain^{\tilde\nlscale_n}_\constr}(T^\nlscale u-(u_n)_\nlscale) 
    &\leq
    \sup_{\domain_\constr^{\tilde\nlscale_n}\setminus\domain_\constr^{2\tilde\nlscale_n}}(T^\nlscale u-(u_n)_\nlscale) + {\Lip_n(\vec g_n)C
    \left(
    \frac{\res_n}{\gscale\nlscale} + 
    \frac{\gscale}{\nlscale^2} +
    \frac{\phi(\gscale)}{\gscale\nlscale}
    \right)}.
\end{align}
\end{subequations}
\end{proposition}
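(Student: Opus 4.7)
The plan is to combine the discrete-to-nonlocal consistency from \cref{thm:discrete_NL} with the nonlocal maximum principle \cref{lem:nonlocal_max_princ}, using the perturbation results \cref{lem:perturb_pos_grad,lem:perturb_strict} to absorb the discretization error by boosting $T_\nlscale u$ into a strict supersolution. Setting
\[E_n := \Lip_n(\vec g_n)\,C\left(\frac{\res_n}{\gscale\nlscale} + \frac{\gscale}{\nlscale^2}+\frac{\phi(\gscale)}{\gscale\nlscale}\right),\]
\cref{thm:discrete_NL} provides $-\Delta_\infty^\nlscale u_n^\nlscale \leq E_n$ on $\domain_\constr^{2\tilde\nlscale_n}$, while \cref{lem:nonloc2loc}, applied to $u$ (which satisfies CDF from below), yields $-\Delta_\infty^\nlscale T_\nlscale u \geq 0$ on $\domain_\constr^{2\nlscale}\supseteq \domain_\constr^{2\tilde\nlscale_n}$. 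Since the right-hand sides are not aligned, \cref{lem:nonlocal_max_princ} cannot be applied to this pair directly.

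To remedy this I would boost $T_\nlscale u$ into a strict supersolution in two steps. For a parameter $\delta>0$, \cref{lem:perturb_pos_grad} produces a function $v$ on $\domain_\constr^{\nlscale}$ with $-\Delta_\infty^\nlscale v \geq 0$, $S_\nlscale^- v \geq \delta$, and $T_\nlscale u \leq v \leq T_\nlscale u + C_1\delta$ on $\domain_\constr^{2\nlscale}$, where $C_1$ depends only on the geodesic diameter of $\closure\domain$. Then, for $\delta'>0$ sufficiently small, \cref{lem:perturb_strict} turns $v$ into $w := v - \delta'\,v^2$ with $-\Delta_\infty^\nlscale w \geq \delta'\delta^2$ and $|w-v|\leq C_2\delta'$. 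Balancing by choosing $\delta = \delta' = E_n^{1/3}$ simultaneously enforces the strict supersolution property $-\Delta_\infty^\nlscale w \geq E_n$ and the closeness estimate $|w-T_\nlscale u|\lesssim E_n^{1/3}$ on $\domain_\constr^{2\tilde\nlscale_n}$.

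Applying \cref{lem:nonlocal_max_princ} to the pair $u_n^\nlscale$ and $w$ on $\domain_\constr^{\tilde\nlscale_n}$ with constant $E_n$ then gives
\[\sup_{\domain_\constr^{\tilde\nlscale_n}}(u_n^\nlscale-w)=\sup_{\domain_\constr^{\tilde\nlscale_n}\setminus\domain_\constr^{2\tilde\nlscale_n}}(u_n^\nlscale-w),\]
and substituting $T_\nlscale u$ for $w$ on both sides up to the error $\lesssim E_n^{1/3}$ yields \labelcref{ineq:cvgc_subharmonic}. The companion estimate \labelcref{ineq:cvgc_superharmonic} follows by the symmetric argument using $-\Delta_\infty^\nlscale (u_n)_\nlscale \geq -E_n$, $-\Delta_\infty^\nlscale T^\nlscale u \leq 0$, and the subharmonic analogues of the perturbation lemmas noted in the remark following \cref{lem:perturb_strict}. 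For the improved rates \labelcref{ineq:cvgc_pos_grad} under $\alpha := \inf_{\closure\domain}|\grad u|>0$, the gradient bound supplies a geodesic descent direction at every point, so that a direct Taylor-type computation yields $S_\nlscale^- T_\nlscale u \geq c\alpha$ for some $c>0$ independent of $\nlscale$. This renders the first perturbation step unnecessary: one applies \cref{lem:perturb_strict} directly to $T_\nlscale u$ with $\delta' = E_n/(c\alpha)^2$, producing $w$ with $-\Delta_\infty^\nlscale w \geq E_n$ and $|w-T_\nlscale u| = O(E_n)$, whence the linear rate follows from the same max-principle argument.

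The main difficulty I anticipate is the bookkeeping of inner parallel sets and constants: I must verify that the consistency estimate (valid on $\domain_\constr^{2\tilde\nlscale_n}$), the max-ball property (valid on $\domain_\constr^{2\nlscale}$), and the perturbation lemmas (stated on $\domain_\constr^{2\nlscale}$) can all be combined on the common set $\domain_\constr^{2\tilde\nlscale_n}$, together with a clean derivation of the lower bound $S_\nlscale^- T_\nlscale u \gtrsim \alpha$ from $|\grad u|\geq \alpha$ in the geodesic (rather than Euclidean) metric, which requires knowing that descent directions of $u$ can be realized along geodesic paths staying inside $\closure\domain$.
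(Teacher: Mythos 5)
Your proposal follows essentially the same route as the paper's proof: combine the consistency estimate of Theorem~\ref{thm:discrete_NL} with the max-ball statement of Lemma~\ref{lem:nonloc2loc}, boost $T_\nlscale u$ into a strict supersolution via Lemma~\ref{lem:perturb_pos_grad} and Lemma~\ref{lem:perturb_strict} with parameter $E_n^{1/3}$, then close with the nonlocal maximum principle (Lemma~\ref{lem:nonlocal_max_princ}) applied at the length scale $\tilde\nlscale_n$. The choice of parameters ($\delta=\delta'=E_n^{1/3}$) and the balancing of the resulting $O(\delta'\delta^2)$ lower bound against the $O(\delta+\delta')$ closeness error is exactly the paper's $\delta=(C_{n,\nlscale})^{1/3}$ choice and yields the cube-root rate. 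For the second and the fourth inequalities, replacing $u$ by $-u$ and invoking the subharmonic versions of the lemmas is the paper's argument as well.

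One minor discrepancy worth noting: in the positive-gradient case, the paper's written proof applies Lemma~\ref{lem:perturb_strict} directly to $u$ (defining $w:=u-\frac{C_{n,\nlscale}}{\alpha^2}u^2$), but the hypothesis of that lemma requires $-\Delta_\infty^\nlscale u\geq 0$, and Lemma~\ref{lem:nonloc2loc} only supplies this for $T_\nlscale u$, not for $u$ itself; moreover the paper's stated closeness bound is indeed $\norm{w-T_\nlscale u}_{L^\infty}\lesssim C_{n,\nlscale}$. Your version, applying the perturbation to $T_\nlscale u$ rather than $u$, is internally consistent and appears to be the intended argument. Both your proposal and the paper elide the justification that $\inf_{\closure\domain}|\grad u|=\alpha>0$ implies $S^-_\nlscale T_\nlscale u\geq c\alpha$; your sketch via geodesic descent directions is a plausible way to fill this in, and you correctly flag it as the place where one needs descent paths to remain inside $\closure\domain$ (which is where the Neumann-type structure of the AMLE enters). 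Overall: same approach, with a marginally cleaner handling of the strong-gradient step.
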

\begin{proof}
We only prove \labelcref{ineq:cvgc_subharmonic}, the second inequality is obtained analogously.
By \cref{thm:discrete_NL} we know that there is a constant $C>0$ such that
\begin{align*}
    -\Delta_\infty^\nlscale u_n^\nlscale \leq \Lip_n(\vec g_n) C 
    \left|
    \frac{\res_n}{\gscale\nlscale} + 
    \frac{\gscale}{\nlscale^2} +
    \frac{\phi(\gscale)}{\gscale\nlscale}
    \right| =: C_{n,\nlscale} \quad\text{in }\domain_\constr^{2\tilde\nlscale_n}.
\end{align*}
By \cref{lem:nonloc2loc} we know that $-\Delta_\infty^\nlscale T_\nlscale u \geq 0$ in $\domain_\constr^{2\nlscale}$ and hence also in $\domain_\constr^{2\tilde\nlscale_n}$.
Furthermore, since $u$ is infinity harmonic, it is bounded and hence also $T_\nlscale u$ is bounded.
Applying \cref{lem:perturb_pos_grad} with $\delta:=(C_{n,\nlscale})^{\frac13}$ we can hence choose $v:\domain_\constr^\nlscale\to\R$ such that
\begin{align*}
    -\Delta_\infty^\nlscale v \geq 0,\quad S_\nlscale^- v \geq (C_{n,\nlscale})^{\frac13},\quad T_\nlscale u \leq v \leq T_\nlscale u + 2 (C_{n,\nlscale})^{\frac13} \dist(\cdot, \domain_\constr^\nlscale\setminus\domain_\constr^{2\nlscale})\quad\text{in }\domain_\constr^{2\nlscale}.
\end{align*}
Since $u\in C(\closure\domain)$
is a bounded function, the same holds for $v$.
Define $w:=v-(C_{n,\nlscale})^{\frac13} v^2$ which according to \cref{lem:perturb_strict} and its definition satisfies
\begin{align*}
    -\Delta_\infty^\nlscale w \geq C_{n,\nlscale} \quad\text{in }\domain_\constr^{2\nlscale},\quad \norm{w-T_\nlscale u}_{L^\infty(\domain_\constr^\nlscale)} \leq c(C_{n,\nlscale})^{\frac13}.
\end{align*}
Then it holds $-\Delta_\infty^\nlscale u_n^\nlscale\leq C_{n,\nlscale}\leq -\Delta_\infty^\nlscale w$ in $\domain_\constr^{2\tilde\nlscale_n}$ and applying \cref{lem:nonlocal_max_princ} with $\nlscale$ replaced by $\tilde\nlscale_n$ we can compute 
\begin{align*}
    \sup_{\domain_\constr^{\tilde\nlscale_n}}(u_n^\nlscale-T_\nlscale u) 
    &\leq
    \sup_{\domain_\constr^{\tilde\nlscale_n}}(u_n^\nlscale-w) + c(C_{n,\nlscale})^{\frac13}\\
    &\leq
    \sup_{\domain_\constr^{\tilde\nlscale_n}\setminus\domain_\constr^{2\tilde\nlscale_n}}(u_n^\nlscale-w) + c(C_{n,\nlscale})^{\frac13}\\
    &\leq
    \sup_{\domain_\constr^{\tilde\nlscale_n}\setminus\domain_\constr^{2\tilde\nlscale_n}}(u_n^\nlscale-T_\nlscale u) + 2c(C_{n,\nlscale})^{\frac13}.
\end{align*}
In the case that $\alpha=\inf_{\closure\domain}\abs{\grad u}>0$ we \revision{do not have to apply \cref{lem:perturb_pos_grad} to construct a perturbation $v$ with positive slope. Instead, we} define $w:=u-\frac{C_{n,\nlscale}}{\alpha^2}u^2$ which according to \cref{lem:perturb_strict} satisfies
\begin{align*}
    -\Delta_\infty^\nlscale w \geq C_{n,\nlscale} \quad\text{in }\domain_\constr^{2\nlscale},\quad \norm{w-T_\nlscale u}_{L^\infty(\domain_\constr^\nlscale)} \leq \frac{c}{\alpha^2} C_{n,\nlscale}.
\end{align*}
From here the proof continues as before.
\end{proof}
For getting rid of the $\eps$-extension operators and boundary terms in \cref{prop:approx_max_princ}, we have to use the lemmas from \cref{sec:lipschitz_lemmas}.
Basically, one uses Lipschitzness of $u$ (and $u_n^\nlscale$ in an appropriate sense) in order to transfer the statement of \cref{prop:approx_max_princ} to $u$ and $\vec u_n$, estimating the boundary term, and extending the uniform estimate to the whole of $\closure\domain$.
All this comes with an additional additive error term of order $\nlscale>0$. 
We are now ready to prove the main theorem of this article, \cref{thm:general_convergence_result}.
\begin{proof}[Proof of \cref{thm:general_convergence_result}]
First we notice that thanks to \cite[Lemma 2.3]{roith2021continuum} we have
\begin{align*}
    \sup_{\domain_n}\abs{u-\vec u_n} = \sup_{\closure\domain}\abs{u-u_n}.
\end{align*}
Again we abbreviate $\tilde\nlscale_n := \nlscale + \frac{3}{2}\res_n$.
Using \cref{lem:ball_to_func_cont,lem:ball_to_func_disc,lem:bdry_term,prop:approx_max_princ} we get
\begin{align*}
    \sup_{\domain_\constr^{\tilde\nlscale_n}}(u_n - u)
    &\leq 
    \sup_{\domain_\constr^{\tilde\nlscale_n}}(u_n - u_n^\nlscale) 
    + 
    \sup_{\domain_\constr^{\tilde\nlscale_n}}(u_n^\nlscale - T_\nlscale u)
    +
    \sup_{\domain_\constr^{\tilde\nlscale_n}}(T_\nlscale u - u)
    \\
    &\leq 
    C\Lip_n(\vec u_n)\nlscale + C\Lip_\domain(u)\nlscale +
    \sup_{\domain_\constr^{\tilde\nlscale_n}\setminus\domain_\constr^{2\tilde\nlscale_n}}(u_n^\nlscale-T_\nlscale u) +
    \sqrt[3]{\Lip_n(\vec g_n)C
    \left(\frac{\res_n}{\gscale\nlscale} +
    \frac{\gscale}{\nlscale^2}+
    \frac{\phi(\gscale)}{\gscale\nlscale}
    \right)}
    \\
    &\leq 
    C\Lip_n(\vec u_n)\nlscale + C\Lip_\domain(u)\nlscale +
    C\nlscale +
    \sqrt[3]{\Lip_n(\vec g_n)C
    \left(\frac{\res_n}{\gscale\nlscale} +
    \frac{\gscale}{\nlscale^2}+
    \frac{\phi(\gscale)}{\gscale\nlscale}
    \right)}.
\end{align*} 
By \cref{prop:lipschitz_discr} we know that $\Lip_n(\vec u_n)=\Lip_n(\vec g_n)$ and by \cref{ass:data} it holds $\sup_{n\in\N}\Lip_n(\vec g_n)\leq C$.
The term $\sup_{\domain_\constr^{\tilde\nlscale_n}}(u-u_n)$ can be estimated analogously.
Hence, we get the inequality
\begin{align*}
    \sup_{\domain_\constr^{\tilde\nlscale_n}}\abs{u-u_n} 
    \leq 
    C\left(\nlscale + 
    \sqrt[3]{\frac{\res_n}{\gscale\nlscale} +
    \frac{\gscale}{\nlscale^2}+
    \frac{\phi(\gscale)}{\gscale\nlscale}
    }
    \right).
\end{align*}
Note that it holds $\abs{u(x)-u(y)}\leq\Lip_\domain(u)d_\domain(x,y)$ for all $x,y\in\closure\domain$.
Using this together with \cref{lem:lipschitz_un,ass:scaling} we get for all $x\in\closure\domain\setminus\domain_\constr^{\tilde\nlscale_n}$ that
\begin{align*}
    \abs{u(x)-u_n(x)} 
    &\leq
    \abs{u(x)-u(\tilde x)}
    +
    \abs{u(\tilde x)-u_n(\tilde x)}
    +
    \abs{u_n(\tilde x)-u_n(x)}
    \\
    &\leq 
    \Lip_\domain(u)\tilde\nlscale_n
    +
    C\left(\nlscale + 
    \sqrt[3]{\frac{\res_n}{\gscale\nlscale} +
    \frac{\gscale}{\nlscale^2}+
    \frac{\phi(\gscale)}{\gscale\nlscale}
    }
    \right)
    +
    C\Lip_n(\vec u_n) \tilde\nlscale_n,
\end{align*}
where $\tilde x\in\domain_\constr^{\tilde\nlscale}$ is such that $d_\domain(x,\tilde x)\leq \tilde\nlscale_n$.
Using also that thanks to \cref{ass:scaling} it holds $\tilde\nlscale_n=\nlscale(1+\frac{3}{2}\res_n)\leq C\nlscale$, we can pass from $\sup_{\domain_\constr^{\tilde\nlscale}}\abs{u-u_n}$ to $\sup_{\closure\domain}\abs{u-u_n}$ at the cost of another term of order $\nlscale$ which proves the first assertion of the theorem.
In the case that $\inf_{\closure\domain}\abs{\nabla u}>0$ one simply uses the better estimate in \cref{prop:approx_max_princ} without $\sqrt[3]{\cdot}$.
\end{proof}

\section{Numerical Results}
\label{sec:numerics}

\begin{figure}[t!]
    \def\width{0.3\textwidth}
    \centering
    \includegraphics[width=\textwidth,trim=1.8cm 1.1cm 1.8cm 0.8cm,clip]{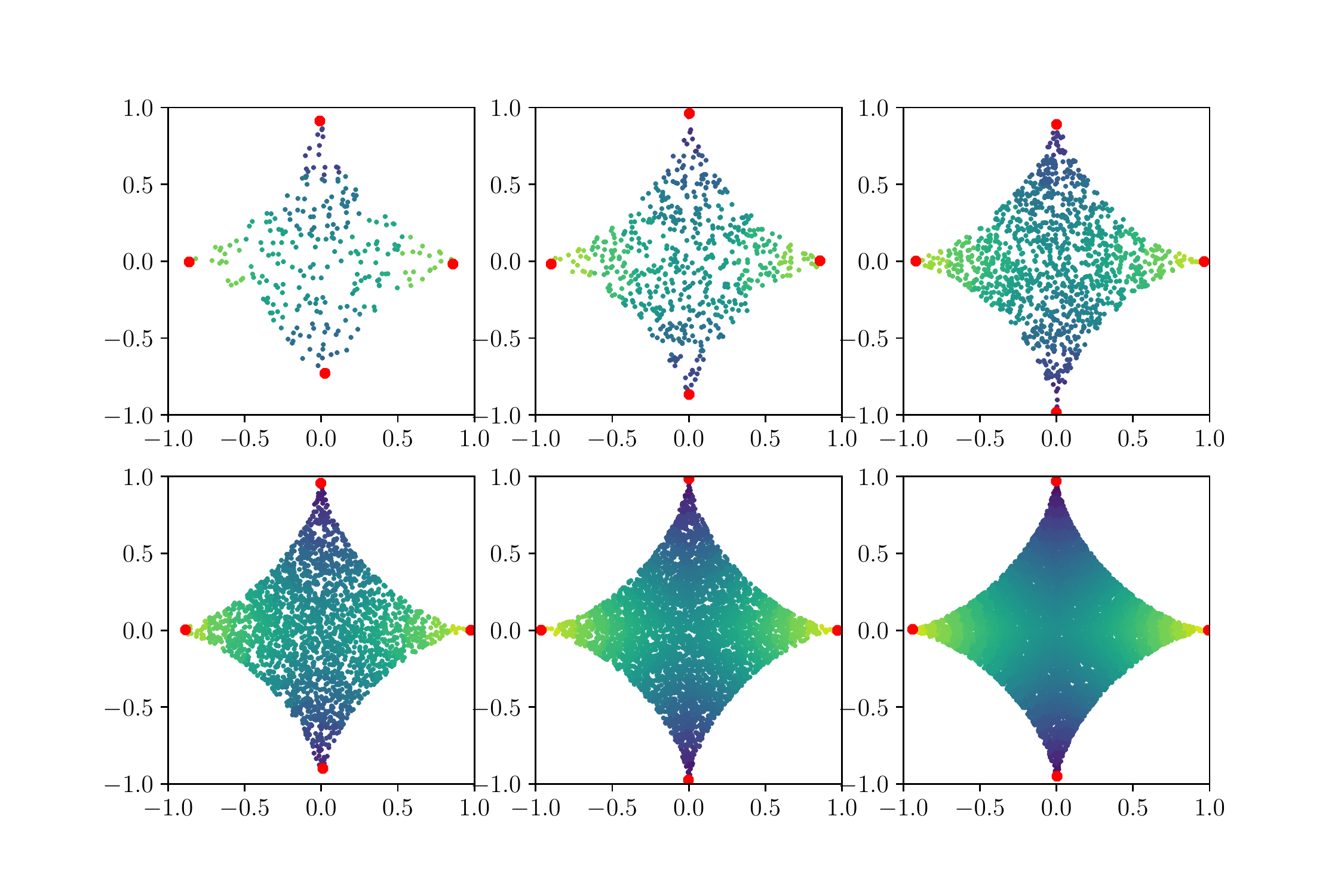}
    \caption{Solutions of the Lipschitz learning problem for different graph resolutions on a non-convex domain with mixed Neumann and Dirichlet conditions.
    The four labelled points in $\constr_n^\mathrm{cp}$ are marked in red.}
    \label{fig:nonconvex}
    \vspace{1cm}
    \centering
    \includegraphics[width=0.49\textwidth,trim=.4cm .4cm .4cm .4cm,clip]{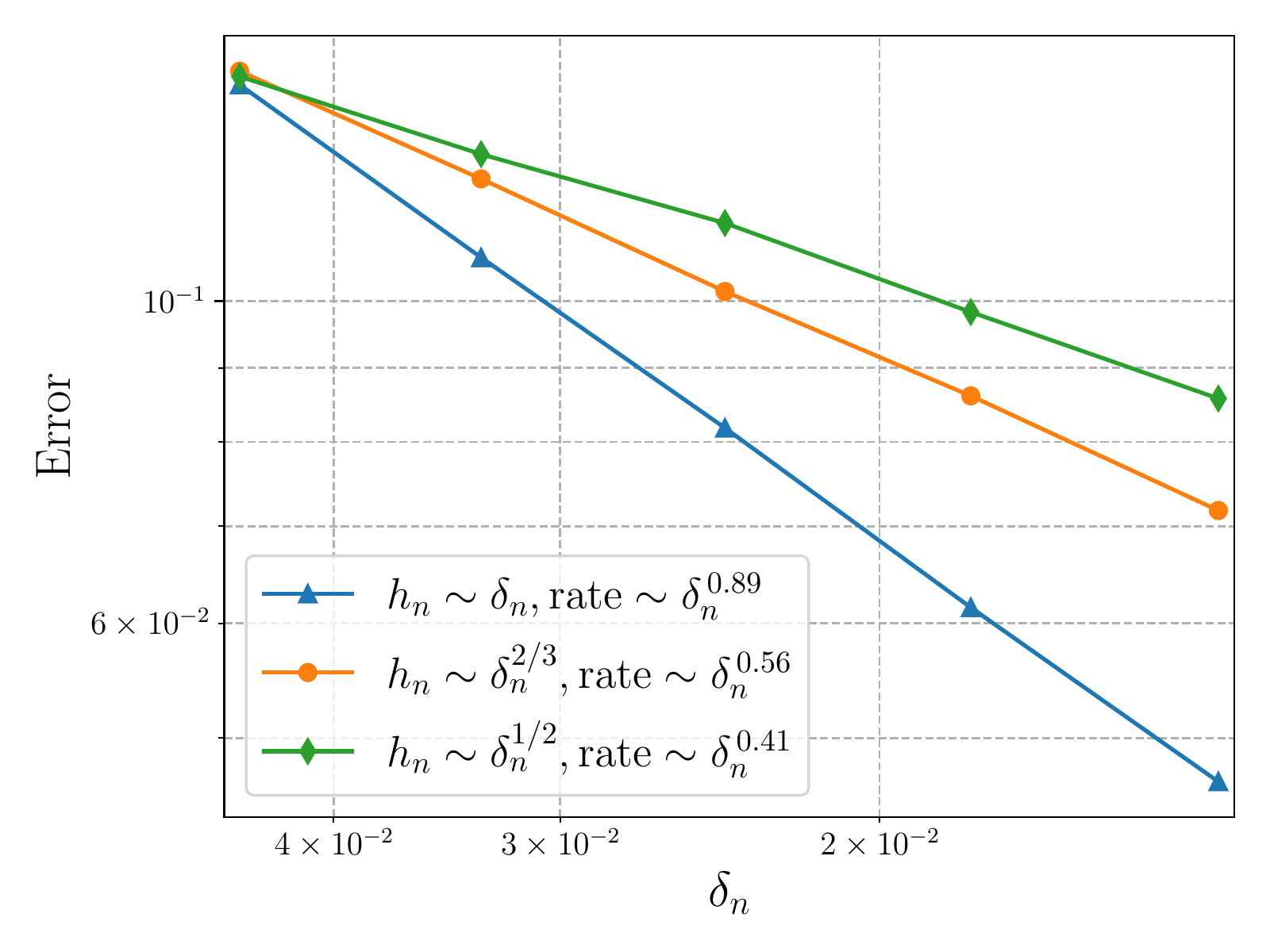}%
    \hfill%
    \includegraphics[width=0.49\textwidth,trim=.4cm .4cm .4cm .4cm,clip]{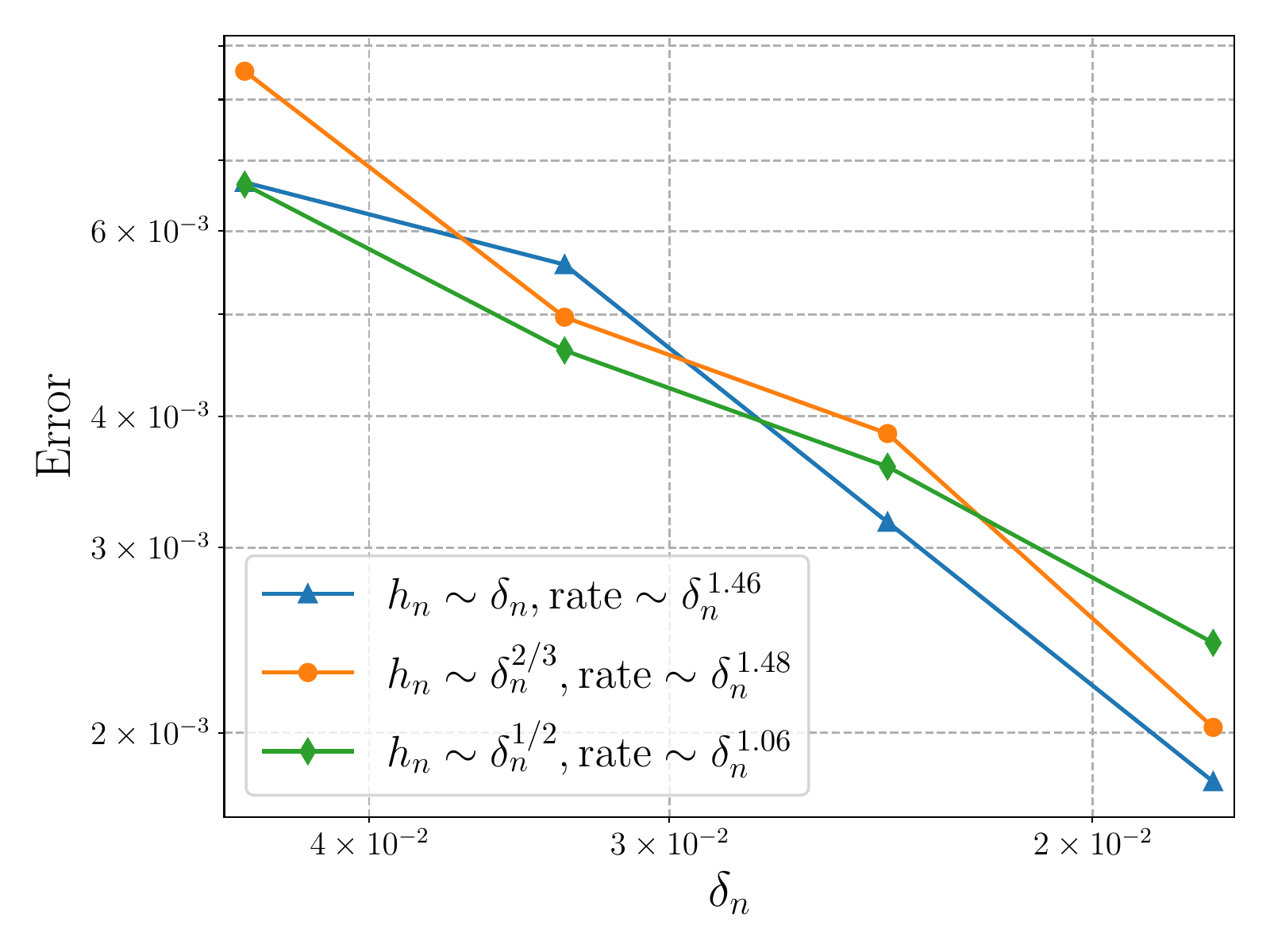}
    \caption{Empirical convergence rates of the solutions in \cref{fig:nonconvex} for unit weights (\textbf{left}) and singular weights (\textbf{right}) and different scalings with the constraint set $\constr_n^\mathrm{cp}$. The rates are specified in the legend.
    Note that our theory does not prove convergence for the blue scaling.}
    \label{fig:rates_nonconvex}
\end{figure}

\begin{figure}[t!]
    \def\width{0.3\textwidth}
    \centering
    \includegraphics[width=\textwidth,trim=1.8cm 1.1cm 1.8cm 0.8cm,clip]{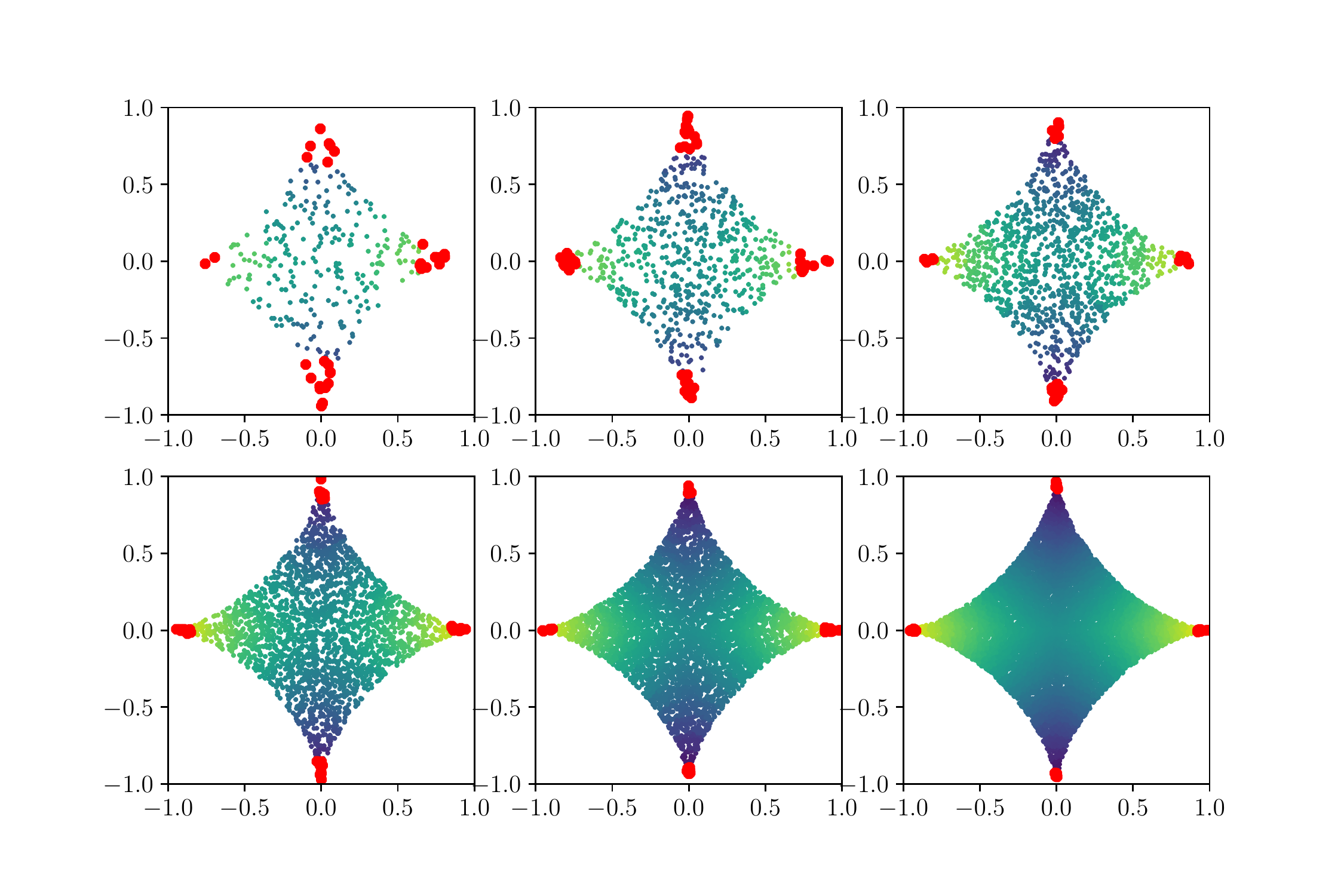}
    \caption{Solutions of the Lipschitz learning problem for different graph resolutions on a non-convex domain with mixed Neumann and Dirichlet conditions.
    The labelled points in $\constr_n^\mathrm{dil}$ are marked in red.}
    \label{fig:nonconvex_dilate}
    \vspace{1cm}
    \centering
    \includegraphics[width=0.49\textwidth,trim=.4cm .4cm .4cm .4cm,clip]{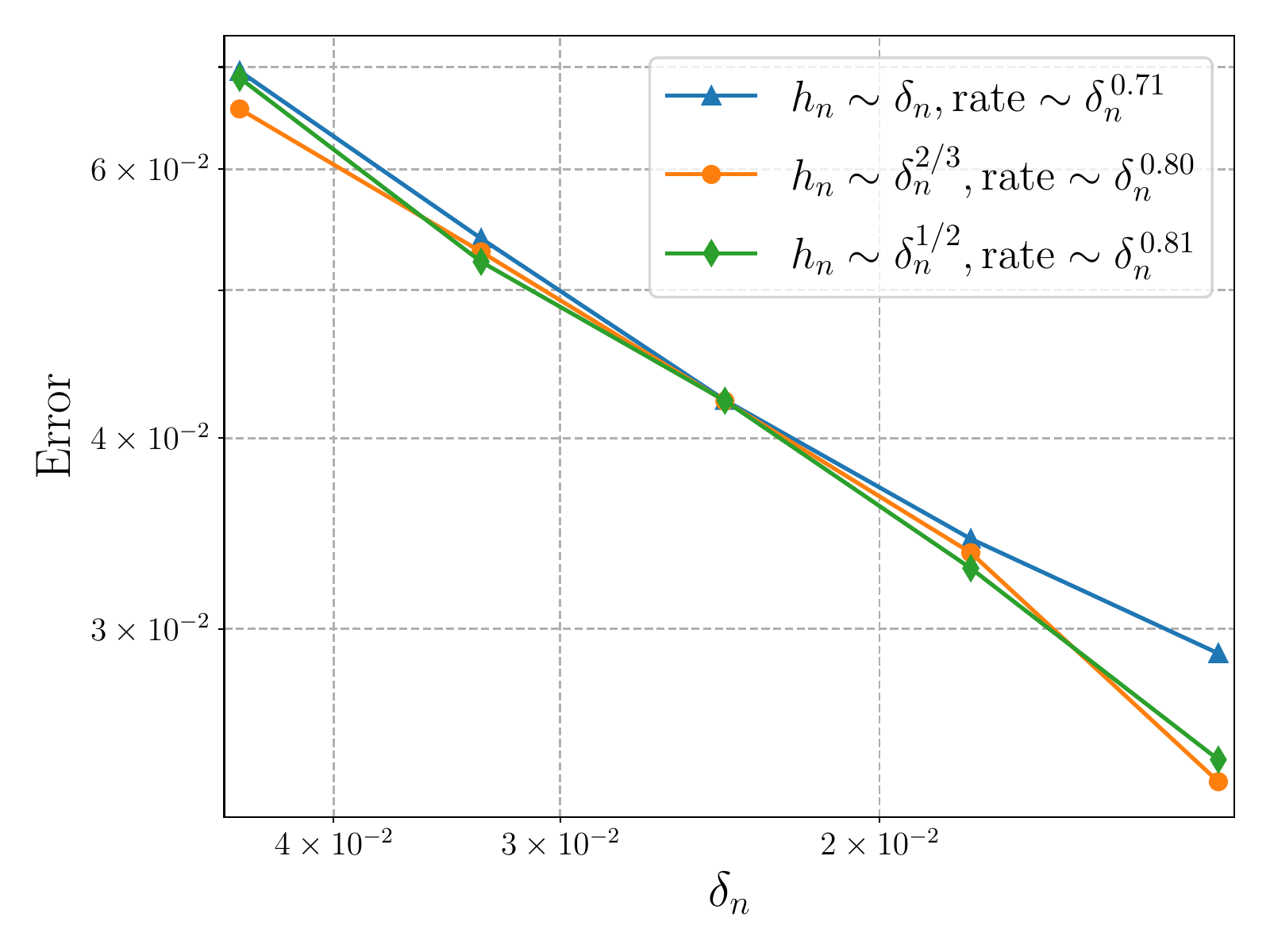}%
    \hfill%
    \includegraphics[width=0.49\textwidth,trim=.4cm .4cm .4cm .4cm,clip]{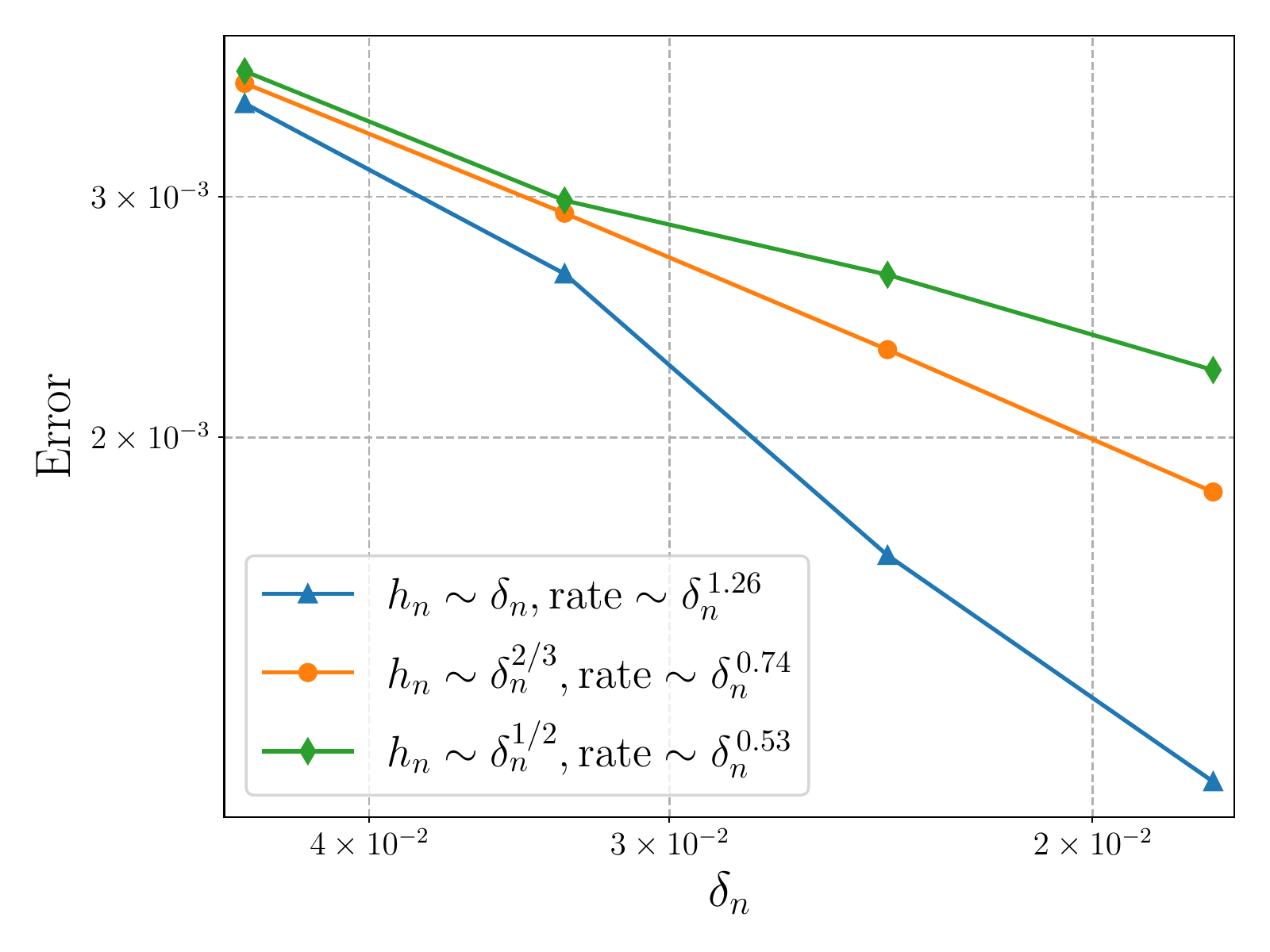}
    \caption{Empirical convergence rates of the solutions in \cref{fig:nonconvex_dilate} for unit weights (\textbf{left}) and singular weights (\textbf{right}) and different scalings with the constraint set $\constr_n^\mathrm{dil}$. The rates are specified in the legend.
    Note that our theory does not prove convergence for the blue scaling.}
    \label{fig:rates_nonconvex_dilate}
\end{figure}

In this section we present a brief numerical study of convergence rates for the continuum limit of Lipschitz learning.\footnote{The code for our experiments is available online: \url{https://github.com/jwcalder/LipschitzLearningRates}.} In order to test convergence rates, we work with the Aronsson function
\[u(x_1,x_2)=\abs{x_1}^{4/3}-\abs{x_2}^{4/3},\]
which is infinity harmonic on the plane $\R^2$.  
We \revision{work with the} non-convex and non-smooth domain \labelcref{eq:neumann_star} which satisfies \cref{ass:geodesic_euclidean_2} \revision{according to \cref{prop:neumann_star}} and on which the Aronsson function is an AMLE \labelcref{eq:AMLE}. 
\revision{For convenience, we remind the reader that this domain was defined as follows}
\begin{align*}
    \domain:=\left\{x\in[0,1]^2 \st \abs{x_1}^{2/3}+\abs{x_2}^{2/3}\leq 1\right\},
\end{align*}
and coincides with the non-convex unit ball around zero of the $\ell_{2/3}$ metric space.

As constraint set we choose the four points $\constr=\{(\pm 1,0)\}\cup\{(0,\pm 1)\}$. This domain was chosen so that the Aronsson function $u$ satisfies the homogeneous Neumann condition $\partial u/\partial \nu = 0$ on $\partial \domain \setminus \constr$. The point cloud $\domain_n$ is generated by drawing \emph{i.i.d.} samples from $\domain$.
For the discrete constraint set we consider two different choices.
The first one consists of the four closest graph points to $\constr$,
\begin{align*}
    \constr_n^\mathrm{cp} = \bigcup_{x\in\constr}\argmin_{\vec x\in\domain_n}\abs{\vec x-x},
\end{align*}
and the second one is given using a dilated boundary:
\begin{align*}
    \constr_n^\mathrm{dil} = \{\vec x\in\domain_n \st \dist(\vec x,\constr)\leq \gscale_n\}.
\end{align*}
\cref{fig:nonconvex} shows the solutions of the Lipschitz learning problem for different numbers $n\in\N$ of vertices, using the constraint set $\constr^\mathrm{cp}_n$.
Furthermore, in \cref{fig:rates_nonconvex} we show a convergence study where we compute empirical rates\footnote{We computed the rates by fitting a linear function to the log-log plots in \cref{fig:rates_nonconvex}.} of convergence for the different bandwidths and kernels.
We consider two different kernels: the constant one with $\eta(t)=1$ and the singular one with $\eta(t)=t^{-1}$.
Furthermore, we consider three different scalings of the graph bandwidth $\gscale_n$ which we choose as $\gscale_n\sim\{\delta_n,\delta_n^{2/3},\delta_n^{1/2}\}$. For the first scaling our theory does not even prove convergence.
This is due to the fact that the angular error in the point cloud should go to zero which manifests in $\gscale_n\gg\res_n$. 
Still, for a randomly sampled point cloud one can hope for a homogenization effect and still obtain convergence.
The other two scalings are covered by our theory, whereby $\gscale_n\sim\res_n^{2/3}$ falls into the small length scale regime (cf. \cref{cor:small_scale}) and $\gscale_n\sim\res_n^{1/2}$ falls into the large one with less smooth boundary (cf. \cref{cor:less_smooth}).

For the constant kernel $\eta(t)=1$ we ran experiments with $n=2^{12}$ up to $n=2^{16}$ points and averaged the errors over 100 different random samples of $\domain_n$. For the singular kernel $\eta(t)=t^{-1}$, the numerical solver is much slower and we ran experiments only up to $n=2^{15}$ and averaged errors over 20 trials. We observe in \cref{fig:rates_nonconvex} that all scalings appear to be convergent. Interestingly, the smallest bandwidth $\gscale_n\sim\res_n$, where convergence cannot be expected in general, exhibits the best rates and smallest errors.
Notably all empirical rates are better than the ones from \cref{cor:small_scale,cor:less_smooth}. 
This does not necessarily mean that our results are not sharp since for the Aronsson solution one can typically prove better rates, see the discussion in \cite{smart2010infinity}.
At the same time, it constitutes the only non-trivial (i.e., not a cone) explicit solution of the AMLE problem which is why it is a popular test case.
We also observe in \cref{fig:rates_nonconvex} that the rates for the singular kernel $\eta(t)=t^{-1}$ are better than the ones for unit weights which suggests that there might be a way to lift the better approximation of the distance functions (cf. \cref{lem:cone_upper}) to a better approximation of the (non-local) infinity Laplacian (cf. \cref{rem:singular_kernel}).
Also the magnitudes of the errors are much smaller for the singular kernel than for the constant one.

In \cref{fig:nonconvex_dilate,fig:rates_nonconvex_dilate} we show the results using the dilated constraint set $\constr_n^\mathrm{dil}$. 
For the singular kernel we see that the smallest graph length scales again yield the best rates, while for the constant kernel $\eta(t)=1$, all length scales exhibit similar convergence rates.
We can partially attribute this to the fact that the constraint set $\constr_n^\mathrm{cp}$ leads to sharp cusps which is resolved better using the smallest bandwidth whereas the dilated boundary in $\constr_n^\mathrm{dil}$ generates smoother domains where the larger length scales are well-suited for approximating the infinity Laplacian.

\section*{Acknowledgments}
The authors thank the Institute for Mathematics and its Applications (IMA) where this collaboration started at a workshop on ``Theory and Applications in Graph-Based Learning'' in Fall 2020. 
Part of this work was also done while the authors were visiting the Simons Institute for the Theory of Computing to participate in the program ``Geometric Methods in Optimization and Sampling'' during the Fall of 2021. 
LB acknowledges funding by the Deutsche Forschungsgemeinschaft (DFG, German Research Foundation) under Germany's Excellence Strategy - GZ 2047/1, Projekt-ID 390685813. 
JC acknowledges funding from NSF grant DMS:1944925, the Alfred P.~Sloan foundation, and a McKnight Presidential Fellowship. 
TR acknowledges support by the German Ministry of Science and Technology (BMBF) under grant agreement No. 05M2020 (DELETO).

\printbibliography

@article{goodfellow2016machine,
  title={Machine learning basics},
  author={Goodfellow, Ian and Bengio, Yoshua and Courville, Aaron},
  journal={Deep learning},
  volume={1},
  number={7},
  pages={98--164},
  year={2016},
  publisher={MIT press Cambridge, MA, USA}
}

@article{lewicka2017obstacle,
  title={The obstacle problem for the p-laplacian via optimal stopping of tug-of-war games},
  author={Lewicka, Marta and Manfredi, Juan J},
  journal={Probability Theory and Related Fields},
  volume={167},
  number={1-2},
  pages={349--378},
  year={2017},
  publisher={Springer}
}

@article{peres2008tug,
  title={Tug-of-war with noise: a game-theoretic view of the p-Laplacian},
  author={Peres, Yuval and Sheffield, Scott},
  journal={Duke Mathematical Journal},
  volume={145},
  number={1},
  pages={91--120},
  year={2008},
  publisher={Duke University Press}
}

@article{peres2009tug,
  title={Tug-of-war and the infinity Laplacian},
  author={Peres, Yuval and Schramm, Oded and Sheffield, Scott and Wilson, David},
  journal={Journal of the American Mathematical Society},
  volume={22},
  number={1},
  pages={167--210},
  year={2009}
}

@article{manfredi2012definition,
  title={On the definition and properties of $ p $-harmonious functions},
  author={Manfredi, Juan J and Parviainen, Mikko and Rossi, Julio D},
  journal={Annali della Scuola Normale Superiore di Pisa-Classe di Scienze},
  volume={11},
  number={2},
  pages={215--241},
  year={2012}
}

@article{von2007tutorial,
  title={A tutorial on spectral clustering},
  author={Von Luxburg, Ulrike},
  journal={Statistics and computing},
  volume={17},
  number={4},
  pages={395--416},
  year={2007},
  publisher={Springer}
}

@article{flores2019algorithms,
  author = {Mauricio Flores and Jeff Calder and Gilad Lerman},
  journal = "Applied and Computational Harmonic Analysis",
  volume = {60},
  pages = {77-122},
  arxiv = {https://arxiv.org/abs/1901.05031},
  code = {https://github.com/mauriciofloresML/Laplacian_Lp_Graph_SSL},
  url = {https://doi.org/10.1016/j.acha.2022.01.004},
  title = "{Analysis and algorithms for Lp-based semi-supervised learning on graphs}",
  year = "2022"
}

@book{murphy2012machine,
  title={Machine learning: a probabilistic perspective},
  author={Murphy, Kevin P},
  year={2012},
  publisher={MIT press}
}

@article{garcia2020maximum,
  title={A maximum principle argument for the uniform convergence of graph Laplacian regressors},
  author={García Trillos, Nicolas and Murray, Ryan W},
  journal={SIAM Journal on Mathematics of Data Science},
  volume={2},
  number={3},
  pages={705--739},
  year={2020},
  publisher={SIAM}
}

@article{calder2020Lip,
  title={Lipschitz regularity of graph Laplacians on random data clouds},
  author={Calder, Jeff and García Trillos, Nicolás and Lewicka, Marta},
  journal={SIAM Journal on Mathematical Analysis},
  volume={54},
  number={1},
  pages={1169--1222},
  year={2022},
  publisher={SIAM}
}

@article{calder2020properly,
  title={Properly-weighted graph Laplacian for semi-supervised learning},
  author={Calder, Jeff and Slep{\v{c}}ev, Dejan},
  journal={Applied mathematics \& optimization},
  volume={82},
  number={3},
  pages={1111--1159},
  year={2020},
  publisher={Springer}
}

@article{aronsson2004tour,
  title={A tour of the theory of absolutely minimizing functions},
  author={Aronsson, Gunnar and Crandall, Michael and Juutinen, Petri},
  journal={Bulletin of the American mathematical society},
  volume={41},
  number={4},
  pages={439--505},
  year={2004}
}

@article{calder2019consistency,
  title={Consistency of Lipschitz learning with infinite unlabeled data and finite labeled data},
  author={Calder, Jeff},
  journal={SIAM Journal on Mathematics of Data Science},
  volume={1},
  numbe={4},
  pages={780--812},
  year={2019},
  publisher={SIAM}
}

@article{calder2018game,
  title={The game theoretic $p$-{L}aplacian and semi-supervised learning with few labels},
  author={Jeff Calder},
  journal={Nonlinearity},
  volume={32},
  number={1},
  pages={301--330},
  year={2018},
  publisher={IOP Publishing}
}

@article{roith2021continuum,
  title={Continuum limit of Lipschitz learning on graphs},
  author={Roith, Tim and Bungert, Leon},
  journal={Foundations of Computational Mathematics},
  pages={1--39},
  year={2022},
  publisher={Springer}
}

@inproceedings{juutinen2002absolutely,
  title={Absolutely minimizing Lipschitz extensions on a metric space},
  author={Juutinen, Petri},
  booktitle={Annales Academiae Scientiarum Fennicae Mathematica},
  volume={27},
  number={1},
  pages={57--68},
  year={2002},
  organization={Academia Scientiarum Fennica}
}

@article{champion2007principles,
  title={Principles of comparison with distance functions for absolute minimizers},
  author={Champion, Thierry and De Pascale, Luigi},
  journal={Journal of Convex Analysis},
  volume={14},
  number={3},
  pages={515},
  year={2007},
  publisher={HELDERMANN VERLAG LANGER GRABEN 17, 32657 LEMGO, GERMANY}
}

@phdthesis{smart2010infinity,
  title={On the infinity Laplacian and Hrushovski's fusion},
  author={Smart, Charles Krug},
  year={2010},
  school={UC Berkeley}
}

@article{armstrong2010easy,
  title={An easy proof of Jensen’s theorem on the uniqueness of infinity harmonic functions},
  author={Armstrong, Scott N and Smart, Charles K},
  journal={Calculus of Variations and Partial Differential Equations},
  volume={37},
  number={3},
  pages={381--384},
  year={2010},
  publisher={Springer}
}

@article{gruyer2007absolutely,
  title={On absolutely minimizing Lipschitz extensions and PDE $\Delta_\infty (u)= 0$},
  author={Le Gruyer, Erwan},
  journal={Nonlinear Differential Equations and Applications NoDEA},
  volume={14},
  number={1},
  pages={29--55},
  year={2007},
  publisher={Springer}
}

@article{armstrong2011infinity,
  title={An infinity Laplace equation with gradient term and mixed boundary conditions},
  author={Armstrong, Scott N and Smart, Charles K and Somersille, Stephanie J},
  journal={Proceedings of the American Mathematical Society},
  pages={1763--1776},
  year={2011},
  publisher={JSTOR}
}

@article{voronoi1908nouvellesb,
  title={Nouvelles applications des param{\`e}tres continus {\`a} la th{\'e}orie des formes quadratiques. Deuxi{\`e}me m{\'e}moire. Recherches sur les parall{\'e}llo{\`e}dres primitifs.},
  author={Vorono\"i, Georges},
  journal={Journal f{\"u}r die reine und angewandte Mathematik (Crelles Journal)},
  volume={1908},
  number={134},
  pages={198--287},
  year={1908},
  publisher={De Gruyter}
}

@article{voronoi1908nouvellesa,
  title={Nouvelles applications des param{\`e}tres continus {\`a} la th{\'e}orie des formes quadratiques. Premier m{\'e}moire. Sur quelques propri{\'e}t{\'e}s des formes quadratiques positives parfaites.},
  author={Vorono\"i, Georges},
  journal={Journal f{\"u}r die reine und angewandte Mathematik (Crelles Journal)},
  volume={1908},
  number={133},
  pages={97--102},
  year={1908},
  publisher={De Gruyter}
}

@article{van2012gamma,
  title={$\Gamma$-convergence of graph Ginzburg-Landau functionals},
  author={Van Gennip, Yves and Bertozzi, Andrea L and others},
  journal={Advances in Differential Equations},
  volume={17},
  number={11/12},
  pages={1115--1180},
  year={2012},
  publisher={Khayyam Publishing, Inc.}
}

@article{elmoataz2015p,
  title={On the $p$-Laplacian and $\infty$-Laplacian on graphs with applications in image and data processing},
  author={Elmoataz, Abderrahim and Toutain, Matthieu and Tenbrinck, Daniel},
  journal={SIAM Journal on Imaging Sciences},
  volume={8},
  number={4},
  pages={2412--2451},
  year={2015},
  publisher={SIAM}
}

@inproceedings{kyng2015algorithms,
  title={Algorithms for Lipschitz learning on graphs},
  author={Kyng, Rasmus and Rao, Anup and Sachdeva, Sushant and Spielman, Daniel A},
  booktitle={Conference on Learning Theory},
  pages={1190--1223},
  year={2015},
  organization={PMLR}
}

@article{juutinen2006equivalence,
  title={Equivalence of AMLE, strong AMLE, and comparison with cones in metric measure spaces},
  author={Juutinen, Petri and Shanmugalingam, Nageswari},
  journal={Mathematische Nachrichten},
  volume={279},
  number={9-10},
  pages={1083--1098},
  year={2006},
  publisher={Wiley Online Library}
}

@article{milman1999absolutely,
  title={Absolutely minimal extensions of functions on metric spaces},
  author={Milman, Victor A},
  journal={Sbornik: Mathematics},
  volume={190},
  number={6},
  pages={859},
  year={1999},
  publisher={IOP Publishing}
}

@article{crandall1992user,
  title={User’s guide to viscosity solutions of second order partial differential equations},
  author={Crandall, Michael G and Ishii, Hitoshi and Lions, Pierre-Louis},
  journal={Bulletin of the American mathematical society},
  volume={27},
  number={1},
  pages={1--67},
  year={1992}
}

@article{esposito2015neumann,
  title={The Neumann eigenvalue problem for the $\infty$-Laplacian},
  author={Esposito, Luca and Kawohl, Bernd and Nitsch, Carlo and Trombetti, Cristina},
  journal={Rendiconti Lincei-Matematica e Applicazioni},
  volume={26},
  number={2},
  pages={119--134},
  year={2015}
}

@article{Ober04,
	doi = {10.1090/s0025-5718-04-01688-6},
	url = {https://doi.org/10.1090/s0025-5718-04-01688-6},
	year = {2004},
	month = sep,
	publisher = {American Mathematical Society ({AMS})},
	volume = {74},
	number = {251},
	pages = {1217--1231},
	author = {Adam M. Oberman},
	title = {A convergent difference  scheme for  the infinity Laplacian:
	construction of absolutely minimizing Lipschitz extensions},
	journal = {Mathematics of Computation}
}

@article{GarcSlep15,
	title={Continuum Limit of Total Variation on Point Clouds},
	volume={220},
	ISSN={1432-0673},
	url={http://dx.doi.org/10.1007/s00205-015-0929-z},
	DOI={10.1007/s00205-015-0929-z},
	number={1},
	journal={Archive for Rational Mechanics and Analysis},
	publisher={Springer Science and Business Media LLC},
	author={Garc\'ia Trillos, Nicol\'as and Slep\v{c}ev, Dejan},
	year={2015},
	pages={193–241}
}

@article{Slep19,
	doi = {10.1137/17m115222x},
	url = {https://doi.org/10.1137/17m115222x},
	year = {2019},
	publisher = {Society for Industrial {\&} Applied Mathematics ({SIAM})},
	volume = {51},
	number = {3},
	pages = {2085--2120},
	author = {Dejan Slep\v{c}ev and Matthew Thorpe},
	title = {Analysis of $p$-Laplacian Regularization in Semisupervised Learning},
	journal = {{SIAM} Journal on Mathematical Analysis}
}

@article{zhu03,
	author = {Zhu, Xiaojin and Ghahramani, Zoubin and Lafferty, John},
	title = {Semi-Supervised Learning Using Gaussian Fields and Harmonic Functions},
	year = {2003},
	isbn = {1577351894},
	publisher = {AAAI Press},
	booktitle = {Proceedings of the Twentieth International Conference on International Conference on Machine Learning},
	pages = {912--919},
	numpages = {8},
	series = {ICML’03}
}

@article{Lux04,
	title={Distance-Based Classification with Lipschitz Functions},
	author={U. V. Luxburg and O. Bousquet},
	journal={J. Mach. Learn. Res.},
	year={2004},
	volume={5},
	pages={669-695}
}

@article{Lux08,
	doi = {10.1214/009053607000000640},
	year = {2008},
	month = apr,
	publisher = {Institute of Mathematical Statistics},
	volume = {36},
	number = {2},
	pages = {555--586},
	author = {Ulrike von Luxburg and Mikhail Belkin and Olivier Bousquet},
	title = {Consistency of spectral clustering},
	journal = {The Annals of Statistics}
}

@inproceedings{Ala16,
  title={Asymptotic behavior of $\ell_p$-based Laplacian regularization in semi-supervised learning},
  author={El Alaoui, Ahmed and Cheng, Xiang and Ramdas, Aaditya and Wainwright, Martin J and Jordan, Michael I},
  booktitle={Conference on Learning Theory},
  pages={879--906},
  year={2016},
  organization={PMLR}
}

@misc{calder2020rates,
      title={Rates of Convergence for Laplacian Semi-Supervised Learning with Low Labeling Rates}, 
      author={Jeff Calder and Dejan Slep\v cev and Matthew Thorpe},
      year={2020},
      eprint={2006.02765},
      archivePrefix={arXiv},
      primaryClass={math.ST}
}

@article{calder2020poisson,
	author = {Jeff Calder and Brendan Cook and Matthew Thorpe and Dejan Slep\v{c}ev},
	journal = {Proceedings of the 37th International Conference on Machine Learning, PMLR},
	title = "{Poisson Learning: Graph based semi-supervised learning at very low label rates}",
   code = {https://github.com/jwcalder/GraphLearning},
   url = {http://proceedings.mlr.press/v119/calder20a.html},
   volume = {119},
   pages = {1306--1316},
	year = "2020"
}

@article{yuan2020continuum,       title={A continuum limit for the     PageRank algorithm}, 
    volume={33}, 
    number={3}, 
    journal={European Journal of Applied Mathematics}, publisher={Cambridge University Press}, 
    author={Yuan, Amber and Calder, Jeff and Osting, Braxton},
    year={2022}, 
    pages={472–-504}
}

@article{calder2020improved,
  title={Improved spectral convergence rates for graph Laplacians on $\varepsilon$-graphs and $k$-NN graphs},
  author={Calder, Jeff and Trillos, Nicolas Garcia},
  journal={Applied and Computational Harmonic Analysis},
  volume={60},
  pages={123--175},
  year={2022},
  publisher={Elsevier}
}

@article{trillos2020error,
  title={Error estimates for spectral convergence of the graph Laplacian on random geometric graphs toward the Laplace--Beltrami operator},
  author={Garc{\' i}a Trillos, Nicol{\'a}s  and Gerlach, Moritz and Hein, Matthias and Slep{\v{c}}ev, Dejan},
  journal={Foundations of Computational Mathematics},
  volume={20},
  number={4},
  pages={827--887},
  year={2020},
  publisher={Springer}
}

@article{van2020survey,
  title={A survey on semi-supervised learning},
  author={Van Engelen, Jesper E and Hoos, Holger H},
  journal={Machine Learning},
  volume={109},
  number={2},
  pages={373--440},
  year={2020},
  publisher={Springer}
}

@article{nadler2009semi,
  title={Semi-supervised learning with the graph Laplacian: The limit of infinite unlabelled data},
  author={Nadler, Boaz and Srebro, Nathan and Zhou, Xueyuan},
  journal={Advances in neural information processing systems},
  volume={22},
  pages={1330--1338},
  year={2009},
  publisher={Citeseer}
}

@article{chambolle2012holder,
  title={A H{\"o}lder infinity Laplacian},
  author={Chambolle, Antonin and Lindgren, Erik and Monneau, R{\'e}gis},
  journal={ESAIM: Control, Optimisation and Calculus of Variations},
  volume={18},
  number={3},
  pages={799--835},
  year={2012},
  publisher={EDP Sciences}
}

@article{mazon2012best,
  title={On the best Lipschitz extension problem for a discrete distance and the discrete $\infty$-Laplacian},
  author={Maz{\'o}n, JM and Rossi, Julio D and Toledo, J},
  journal={Journal de math{\'e}matiques pures et appliqu{\'e}es},
  volume={97},
  number={2},
  pages={98--119},
  year={2012},
  publisher={Elsevier}
}

\end{document}